\numberwithin{equation}{section}
\newtheorem{Thm}[equation]{Theorem}
\newtheorem{Prop}[equation]{Proposition}
\newtheorem{Cor}[equation]{Corollary}
\newtheorem{Lem}[equation]{Lemma}
\theoremstyle{definition}
\newtheorem{Def}[equation]{Definition}
\newtheorem{Exa}[equation]{Example}
\newtheorem{Rmk}[equation]{Remark}
\begin{document}

\title [Quantum affine algebras and $q$-deformation of
arithmetical functions]
{Quantum affine algebras, canonical bases and $q$-deformation of
arithmetical functions}
\author[Henry Kim]{Henry H. Kim$^{\star}$}
\thanks{$^{\star}$ partially supported by an NSERC grant.}
\address{Department of
Mathematics, University of Toronto, Toronto, ON M5S 2E4, CANADA}
\email{henrykim@math.toronto.edu}
\author[Kyu-Hwan Lee]{Kyu-Hwan Lee}
\address{Department of
Mathematics, University of Connecticut, Storrs, CT 06269, U.S.A.}
\email{khlee@math.uconn.edu}
\subjclass[2000]{Primary 17B37; Secondary 05E10}
\begin{abstract} 
In this paper, we obtain affine analogues of Gindikin-Karpelevich formula and Casselman-Shalika formula as sums over Kashiwara-Lusztig's canonical bases. Suggested by these formulas, we define natural $q$-deformation of arithmetical functions such as (multi-)partition function and Ramanujan $\tau$-function, and prove various identities among them. In some examples, we recover classical identities by taking limits. We also consider $q$-deformation of Kostant's function and study certain $q$-polynomials whose special values are  weight multiplicities.
\end{abstract}

\maketitle

\section*{Introduction} This paper is a continuation of \cite{KimLee-1}. The classical Gindikin-Karpelevich formula and Casselman-Shalika formula express certain integrals of spherical functions over maximal unipotent subgroups of $p$-adic groups as products over all positive roots. In \cite{KimLee-1}, we expressed the products over positive roots as sums over Kashiwara-Lusztig's canonical bases (\cite{Kash, Lusz-1}). That idea first appeared in the papers \cite{BN, McN} from the context of Weyl group multiple Dirichlet series \cite{BBFcomb, BBFcr}. (See also \cite{BBCFH, BBFH, BBF}.) 
Let $G$ be a split reductive $p$-adic group, $\chi$ be an unramified character of $T$, the maximal torus, and $f^0$ be the standard spherical vector corresponding to $\chi$. Let $\bold z$ be the element of 
${}^L T\subset {}^L G$, the $L$-group of $G$, corresponding to $\chi$ by the Satake isomorphism. Then
\begin{eqnarray} 
\int_{N_-(F)} f^0(n)\, dn &=& \prod_{\alpha\in \Delta^+} \frac {1-q^{-1}{\bold z}^{\alpha}}{1-{\bold z}^{\alpha}}=\sum_{b\in \mathbf B } (1-q^{-1})^{d(\phi_{\bold i}(b))}{\bold z}^{\mathrm{wt}(b)},  \\
\int_{N_-(F)} f^0(n)\psi_{\lambda}(n)\, dn & =&  \chi (V(\lambda))
 \prod_{\alpha\in \Delta^+} (1-q^{-1}{\bold z}^{\alpha}) \\ 
&=& (-t)^{M} \mathbf z^{ 2 \rho} \  \chi (V(\lambda))
 \prod_{\alpha\in \Delta^+} (1-t^{-1}{\bold z}^{-\alpha}) \nonumber
 \\ &=&   (-t)^{M} \mathbf z^{  \rho} \sum_{b' \otimes b \in \frak B_{\lambda} \otimes \frak B_{\rho} } G_{\rho}(b; t) \mathbf z^{\mathrm{wt}(b' \otimes b) },\nonumber
\end{eqnarray}
where $\Delta^+$ is the set of positive roots, $\bold B$ is the canonical basis, $\frak B_{\lambda}$ is the crystal basis with highest weight $\lambda$, and we set $M=| \Delta^+|$ and $t =q^{-1}$.  Notice that the tensor product of crystal bases behaves well in the Casselman-Shalika formula.

In the affine Kac-Moody groups, A. Braverman, D. Kazhdan and M. Patnaik calculated the integral (0.1) in \cite{BKP}, and obtained a formula of the form

\begin{equation} \label{eqn-correction}
\int_{N_-(F)} f^0(n)\, dn= A \prod_{\alpha\in \Delta^+} \left(\frac {1-q^{-1}{\bold z}^{\alpha}}{1-{\bold z}^{\alpha}}\right)^{\mathrm{mult}\, \alpha},
\end{equation}
where $A$ is a certain correction factor. When the underlying finite simple Lie algebra $\frak g_{\mathrm{cl}}$ is simply-laced of rank $n$, $A$ is given by
$$\prod_{i=1}^n \prod_{j=1}^{\infty} \frac {1-q^{-d_i}{\bold z}^{j\delta}}{1-q^{-d_i-1}{\bold z}^{j\delta}},
$$
where $d_i$'s are the exponents of $\frak g_{\mathrm{cl}}$, and $\delta$ is the minimal positive imaginary root.

In this paper, we use the explicit description of the canonical basis due to Beck, Chari, Pressley and Nakajima 
(\cite{BCP}, \cite{BeckNa}) to write the right hand side of (0.3) as a sum over the canonical basis. Moreover, we obtain the generalization of (0.2).
Namely, we prove (Theorem \ref{thm-first} and Corollary \ref{cor-familiar})
\begin{eqnarray}  \prod_{\alpha \in \Delta^+} \left ( \frac {1- q^{-1} \mathbf z^\alpha}{1 - \mathbf z^\alpha} \right )^{\mathrm{mult} \, \alpha} &=&  \sum_{b \in \mathbf B} (1 - q^{-1})^{d \left ( \phi (b) \right ) } \mathbf z^{\mathrm{wt}(b)}, \label{eqn-fff} \\
 \chi (V(\lambda)) \mathbf z^{\rho} \prod_{\alpha \in \Delta^+} (1 - q^{-1} \mathbf z^{- \alpha} )^{\mathrm{mult}\, \alpha}
&=&  \sum_{ b' \otimes b \in \frak B_{\lambda} \otimes \frak B_{\rho} } G_{\rho} (b; q) \, \mathbf z^{\mathrm{wt}(b' \otimes b)}, \label{eqn-ggg}
\end{eqnarray}
where $\mathbf B$ is the canonical basis of $\mathbf U^+$ (the positive part of the quantum affine algebra),
and $\frak B_{\lambda}$ is the crystal basis with highest weight $\lambda$. Here $\mathbf z$ is a formal variable. 
We also write the correction factor $A$ as a sum over a canonical basis in the case when $\frak g_{\mathrm{cl}}$ is simply-laced. We first prove (\ref{eqn-fff}) by induction, and deduce (\ref{eqn-ggg}) from (\ref{eqn-fff}) and Weyl-Kac character formula. In the course of proof, we see that (\ref{eqn-ggg}) can be considered as a $q$-deformation of Weyl-Kac character formula. 

In the development of Weyl group multiple Dirichlet series (\cite{BBCFH, BBFH, BBF, BBFcomb, BBFcr}), one of the main problems has been how to define the local coefficient (or $p$-part). Various combinatorial methods have been adopted to define the coefficient. In particular, the string parametrization (or BZL-path) of a crystal graph was used in \cite{BBFcr}. In our previous paper \cite{KimLee-1}, the $q$-polynomial  $H_{\lambda+\rho}(\mu;q)$ was defined using the generating series:
\begin{equation} \label{eqn-abc} \sum_{\mu \in Q_+} H_{\lambda}(\mu; q ) \mathbf z^{\lambda - \mu}  = \sum_{w \in {W}} (-1)^{\ell(w)} \sum_{b \in \mathbf B}  (1-q^{-1})^{d(\phi (b) ) } \mathbf z^{w \lambda- \mathrm{wt}(b) } .\end{equation} This definition works for all the finite root systems in a uniform way, and is essentially the same as the local coefficient in the non-metaplectic case that was introduced in the work \cite{CGco} of Chinta and Gunnells on the construction of Weyl group multiple Dirichlet series. 

In this paper, we define the polynomial $H_{\lambda+\rho}(\mu;q) \in \Bbb Z[q^{-1}]$ for affine cases using the same formula. That is to say, the definition (\ref{eqn-abc}) works for affine cases as well. Moreover, we show that the polynomial $H_{\lambda+\rho}(\mu;q)$ has many remarkable, representation-theoretic properties; its constant term is the multiplicity of the weight $\lambda-\mu$ in $V(\lambda)$, and the value at $q=-1$ is the multiplicity of the weight $\lambda+\rho-\mu$ in the tensor product $V(\lambda)\otimes V(\rho)$. See Corollary \ref{cor-HHH}. It is also related to Kazhdan-Lusztig polynomials when $\frak g$ is of finite type (Corollary \ref{cor-KL}).

Our construction also has connections to deformations of Kostant's partition function. When $q=-1$ and $\lambda$ is a strictly dominant weight, the Casselman-Shalika formula (0.5) gives a formula for multiplicity of the weight $\nu$ in the tensor product $V(\lambda-\rho)\otimes V(\rho)$
in terms of $q$-deformation of Kostant partition function, generalizing the result of \cite[Theorem 1]{GR} to affine Kac-Moody algebras (See (\ref{eqn-GR})). More precisely, we define
$K^\infty_q(\mu)$, in a similar way as in \cite{GR}, by
\begin{equation*}
\sum_{\mu\in Q_+} K^\infty_q(\mu) {\mathbf z}^\mu=\prod_{\alpha \in \Delta^+} \left ( \frac {1- q^{-1} \mathbf z^\alpha}{1 - \mathbf z^\alpha} \right )^{\mathrm{mult} \, \alpha}.
\end{equation*} 
Note that when $q=\infty$, $K^\infty_{q}(\mu)$ is the classical Kostant partition function. Then we have

\begin{equation*} 
\dim \,  (V(\lambda-\rho)\otimes V(\rho))_{\nu}=\sum_{w\in W} (-1)^{l(w)} K^\infty_{-1}(w\lambda -\nu).
\end{equation*}

Another application is deformation of arithmetical functions. Since the set of positive roots is infinite, the left-hand sides of (0.4) and (0.5) become infinite products. It leads to natural $q$-deformation of arithmetical functions such as multi-partition functions and Fourier coefficients of modular forms. We indicate one example here.

We define $\epsilon_{q,n}(k)$ as
\[ \prod_{k=1}^\infty (1 -q^{-1} t^k)^n = \sum_{k=0}^\infty \epsilon_{q,n}(k) t^k .\] 
Note that $\epsilon_{1,n}(k)$ is a classical arithmetic function related to modular forms. For example, we have $\epsilon_{1, 24}(k) = \tau(k+1)$, where $\tau(k)$ is the Ramanujan $\tau$-function. Thus the function $\epsilon_{q,n}(k)$ should be considered as a $q$-deformation of the function $\epsilon_{1,n} (k)$.

For a multi-partition $\mathbf p = (\rho^{(1)}, \dots , \rho^{(n)} ) \in \mathcal P(n)$, we define \[ p_{q, n} (k) = \sum_{\mathbf p \in \mathcal P(n) \atop |\mathbf p|=k} (1-q^{-1})^{d(\mathbf p )} ,   \qquad k \ge 1 ,\] and set $p_{q, n}(0)=1$. Here $|\mathbf p|$ is the weight of the multi-partition and the number $d(\mathbf p)$ is defined in Section 1. Notice that if $q \rightarrow \infty$ and $k>0$, the function $p_{\infty, n}(k)$ is nothing but the multi-partition function with $n$-components. In particular, $p_{\infty,1}(k)=p(k)$, the usual partition function.
Hence we can think of $p_{q, n} (k)$ as a $q$-deformation of the multi-partition function.

It turned out that there are interesting relations among these $q$-deformations.  We prove (Proposition \ref{prop-recur})
$$\epsilon_{q,n}(k) = \sum_{r=0}^k \epsilon_{1,n} (r) p_{q, n}(k-r),
$$ 
which yields an infinite family of $q$-polynomial identities. 
We also obtain ``classical" identities by taking limits.
When  $n=24$ and $q\to\infty$, the identity becomes a well-known recurrence formula for the Ramanujan $\tau$-function:
\[ 0= \sum_{r=0}^k \tau(r+1) p_{\infty, 24} (k-r) . \]

In fact, we  prove another family of identities (Proposition \ref{prop-cl-H}) and obtain an intriguing characterization of the function $\epsilon_{q, n}(k)$.  In Example \ref{exa-cl-H}, by taking $q=1$, we write $\tau(k+1)$ as a sum of certain numbers arising from the structure of the affine Lie algebra of type $A^{(1)}_4$. To be precise, we have
$$\tau(k+1)= \lim_{q \rightarrow 1} \sum_{\mu \in Q_{+,\mathrm{cl}}} H_\rho(k \alpha_0 + \mu;q ) \Big /  (1-q^{-1})^{10},
$$ where $Q_{+,\mathrm{cl}}$ is the classical nonnegative root lattice of type $A_4$.

\subsection*{Acknowledgments} We would like to thank M. Patnaik for explaining his results [4]. We also thank A. Ram, P. Gunnells, S. Friedberg and B. Brubaker for their useful comments on this paper. Finally, we thank Seok-Jin Kang and G. Benkart for their interest in this work.

\vskip 1cm

\section{Gindikin-Karpelevich Formula}

Let $\frak g$ be an untwisted affine Kac-Moody algebra over $\mathbb C$. We denote by $I = \{ 0, 1, \dots , n \}$ the set of indices for simple roots. Let $W$ be the Weyl group. We keep almost all the notations in Sections 2 and 3 of \cite{BeckNa}. However, we use $v$ for the parameter of a quantum group and reserve $q$ for another parameter. 
Whenever there is a discrepancy in notations, we will make it clear. 

We fix $\mathbf h =( \dots , i_{-1}, i_0, i_1, \dots)$ as in Section 3.1 in \cite{BeckNa}. Then for any integers $m < k$, the product $s_{i_m} s_{i_{m+1}} \cdots s_{i_k} \in W$ is a reduced expression, so is the product  $s_{i_k} s_{i_{k-1}} \cdots s_{i_m} \in W$. We set \[ \beta_k = \begin{cases} s_{i_0}s_{i_{-1}} \cdots s_{i_{k+1}} ( \alpha_{i_k}) & \text{ if } k \le 0 , \\ s_{i_1}s_{i_{2}} \cdots s_{i_{k-1}} ( \alpha_{i_k}) & \text{ if } k > 0 , \end{cases} \] and define
\[ \mathscr{R}(k) = \{ \beta_0 , \beta_{-1}, \dots , \beta_{k} \} \ \text{ for } k \le 0 \quad \text{ and } \quad  \mathscr{R}(k) = \{ \beta_1 , \beta_{2}, \dots , \beta_{k} \} \ \text{ for } k > 0 .\]

Let $T_i=T^{''}_{i, 1}$ be the automorphism of $\mathbf U$ as in Section 37.1.3. of \cite{Lusz-Bk}, and let 
\[ \mathbf c_+= (c_0, c_{-1}, c_{-2} , \dots ) \in \mathbb N^{\mathbb Z_{\le 0} } \quad \text{ and } \quad \mathbf c_- = (c_1, c_2, \dots ) \in \mathbb N^{\mathbb Z_{> 0} }\] be functions (or sequences) that are almost everywhere zero. We denote by $\mathscr C_> $ (resp. by $\mathscr C_<$) the set of such functions $\mathbf c_+$ (resp. $\mathbf c_-$).
Then we define
\[ E_{\mathbf c_+} = E_{i_0}^{(c_0 )} T^{-1}_{i_0} \left ( E_{i_{-1}}^{( c_{-1} )} \right )  T^{-1}_{i_0}  T^{-1}_{i_{-1}}  \left ( E_{i_{-2} }^{(  c_{-2}  )} \right ) \cdots \] and \[ E_{\mathbf c_-} = \cdots T_{i_1}  T_{i_{2}}  \left ( E_{i_{3} }^{(  c_{3}  )} \right ) T_{i_1} \left ( E_{i_{2}}^{( c_{2} )} \right )   E_{i_1}^{(c_1 )}  . \] 
We set 
$$ B(k)=\begin{cases} \{ E_{\mathbf c_+} : c_m =0 \text{ for } m <k  \}  & \text{ for } k \le 0, \\
                       \{ E_{\mathbf c_-} : c_m =0 \text{ for } m > k  \} & \text{ for } k > 0 .
\end{cases}
$$

We denote by $\mathbf B$ the Kashiwara-Lusztig's canonical bases for $\mathbf U^+$, the positive part of the quantum affine algebra.

\begin{Prop} \cite{BCP, BeckNa} For each $E_{\mathbf c_+} \in B(k),\ k \le 0$ (resp. $E_{\mathbf c_-} \in B(k), \ k > 0$), there exists a unique $b \in \mathbf B$ such that  \begin{equation} \label{eqn-ec} b \equiv E_{\mathbf c_{+} } \text{(resp. $E_{\mathbf c_{-} } $)} \mod \ v^{-1} \mathbb Z[v^{-1}].\end{equation}
\end{Prop}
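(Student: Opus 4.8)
The plan is to deduce the proposition from the PBW-basis theory for quantum affine algebras of \cite{BCP, BeckNa} together with Lusztig's characterization of the canonical basis as the leading-term (bar-invariant, unitriangular) lift of a PBW basis. As a first step I would reduce to a single finite reduced word: fix $k \le 0$ (the case $k > 0$ is symmetric under reversing the sequence $\mathbf h$). Since the defining product for any $E_{\mathbf c_+} \in B(k)$ involves only the indices $i_0, i_{-1}, \dots , i_k$, the set $\{\, E_{\mathbf c_+} \in B(k) \,\}$ is precisely the PBW basis attached to the reduced expression $w_k := s_{i_0} s_{i_{-1}} \cdots s_{i_k} \in W$: its $j$-th factor $T^{-1}_{i_0}\cdots T^{-1}_{i_{j+1}}(E_{i_j})$ is the root vector for the real root $\beta_j$, and these range over the \emph{finite} set $\mathscr R(k) = \{\beta_0, \dots , \beta_k\}$. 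So one is in a situation entirely analogous to the finite-type case.

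Second, I would invoke the structure theory. By \cite{BCP, BeckNa} (building on \cite{Lusz-Bk}), when the monomials $E_{\mathbf c_+} \in B(k)$ are expanded in the canonical basis $\mathbf B$ the transition matrix is unitriangular with off-diagonal entries in $v^{-1}\mathbb Z[v^{-1}]$; equivalently, the bar involution acts on the $E_{\mathbf c_+}$ by a unitriangular matrix with off-diagonal entries in $v^{-1}\mathbb Z[v^{-1}]$, and the $\mathbb Z[v^{-1}]$-span of the $E_{\mathbf c_+}$ is a summand of Kashiwara's crystal lattice $\mathbf L(\infty)$ in every weight space. From the unitriangularity it follows at once that the unique $b \in \mathbf B$ appearing with coefficient $1$ in the expansion of $E_{\mathbf c_+}$ satisfies $b \equiv E_{\mathbf c_+} \bmod v^{-1}\mathbb Z[v^{-1}]$; this gives existence, and uniqueness holds because the elements of $\mathbf B$ are $\mathbb Z[v,v^{-1}]$-linearly independent and, in particular, pairwise incongruent modulo $v^{-1}\mathbf L(\infty)$. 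The case $k > 0$ is handled identically with $E_{\mathbf c_-}$ in place of $E_{\mathbf c_+}$.

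The only genuine obstacle is the compatibility used in the second step, which in affine type is not formal. The monomials $E_{\mathbf c_{\pm}}$ are built from \emph{real} root vectors only and a priori span just a proper subalgebra of $\mathbf U^+$, whereas the canonical basis of the whole $\mathbf U^+$ also involves imaginary root vectors and the delicate $\delta$-part of the affine PBW basis. Establishing that the real-root monomials nonetheless fit compatibly inside the global basis --- so that each $E_{\mathbf c_{\pm}}$ really does have a canonical-basis partner in the stated sense --- is exactly what Beck--Chari--Pressley's construction of the affine PBW basis and Beck--Nakajima's comparison of the PBW basis with Kashiwara's global basis accomplish, and for this proposition I would simply cite those results.
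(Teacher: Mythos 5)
The paper gives no proof of this proposition: it is quoted directly from \cite{BCP, BeckNa}, and your proposal likewise defers the essential content (the construction of the affine PBW basis, including the imaginary root vectors, and its unitriangular comparison with Kashiwara--Lusztig's global basis) to those references, so you are taking essentially the same route, with a correct and useful outline of what the cited results actually provide. One small imprecision worth flagging: the bar involution sends a PBW monomial to itself plus a $\mathbb Z[v,v^{-1}]$-linear combination of lower terms (the off-diagonal coefficients need not lie in $v^{-1}\mathbb Z[v^{-1}]$); it is the transition matrix expressing the canonical basis in terms of the PBW basis, obtained from Lusztig's lemma, whose off-diagonal entries lie in $v^{-1}\mathbb Z[v^{-1}]$, and that is precisely the congruence the proposition asserts.
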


We denote by $\mathbf B(k)$ the subset of $\mathbf B$ corresponding to $B(k)$ as in the above theorem. Then we define the map $\phi : \mathbf B(k) \rightarrow \mathscr C_>$ for $k \le 0$ (resp. $\mathscr C_<$ for $k>0$) to be  $b \mapsto \mathbf c_+ $ (resp. $\mathbf c_- $) such that the condition (\ref{eqn-ec}) holds. For an element $\mathbf c_+=(c_0, c_{-1},  \dots ) \in \mathscr C_>$ (resp. $\mathbf c_-=(c_1, c_2, \dots ) \in \mathscr C_>$), we define $d(\mathbf c_+)$ (resp. $d(\mathbf c_-)$) to be the number of nonzero $c_i$'s. 

\begin{Prop} \label{prop-Weyl}
For each $k \in \mathbb Z$, we have
\begin{equation} \label{eqn-id} \prod_{\alpha \in \mathscr R(k)} \frac {1- q^{-1} \mathbf z^\alpha}{1 - \mathbf z^\alpha} =  \sum_{b \in \mathbf B(k)} (1 - q^{-1})^{d \left ( \phi (b) \right ) } \mathbf z^{\mathrm{wt}(b)}.
\end{equation}
\end{Prop}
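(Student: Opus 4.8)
The plan is to induct on $|k|$ (the size of the set $\mathscr R(k)$), the base case $k=0$ being the statement $\prod_{\alpha\in\mathscr R(0)}\tfrac{1-q^{-1}\mathbf z^{\beta_0}}{1-\mathbf z^{\beta_0}}=\sum_{b\in\mathbf B(0)}(1-q^{-1})^{d(\phi(b))}\mathbf z^{\mathrm{wt}(b)}$, which follows directly from the explicit parametrization: $\mathbf B(0)$ consists of the canonical-basis elements equivalent to $E_{i_0}^{(c_0)}$ for $c_0\ge 0$, so $\mathrm{wt}(b)=c_0\beta_0$ and $d(\phi(b))$ equals $0$ if $c_0=0$ and $1$ otherwise; summing the geometric-type series $1+\sum_{c_0\ge 1}(1-q^{-1})\mathbf z^{c_0\beta_0}$ gives exactly $\tfrac{1-q^{-1}\mathbf z^{\beta_0}}{1-\mathbf z^{\beta_0}}$. (The case $k=1$ is identical with $\beta_1=\alpha_{i_1}$.)

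For the inductive step with $k\le 0$, I would relate $\mathscr R(k-1)$ to $\mathscr R(k)$ and $\mathbf B(k-1)$ to $\mathbf B(k)$ via the braid-group automorphism. Concretely, applying $s_{i_0}s_{i_{-1}}\cdots s_{i_{k+1}}$ sends $\alpha_{i_k}$ to $\beta_k$, and the operator $T_{i_0}^{-1}T_{i_{-1}}^{-1}\cdots T_{i_{k+1}}^{-1}$ (call it $T$) carries the canonical basis of the subalgebra generated by the relevant $E$'s onto $\mathbf B(k)\setminus\mathbf B(k-1)$ in a way compatible with Proposition~1.8 (that is, with the $\equiv \mod v^{-1}\mathbb Z[v^{-1}]$ normalization, by the standard properties of $T_i$ from \cite{Lusz-Bk} Section~37). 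The key bookkeeping fact is that every $E_{\mathbf c_+}\in B(k-1)$ factors uniquely as $E_{i_0}^{(c_0)}T_{i_0}^{-1}(E_{i_{-1}}^{(c_{-1})})\cdots$ with the "head" part lying in $B(k)$ and a remaining "tail" starting at index $k-1$; this gives a bijection $\mathbf B(k-1)\leftrightarrow \mathbf B(k)\times\{c_{k-1}\ge 0\}$ under which $\mathrm{wt}$ adds $c_{k-1}\beta_{k-1}$ and $d(\phi(-))$ adds $0$ or $1$ according as $c_{k-1}=0$ or not. Hence
\[
\sum_{b\in\mathbf B(k-1)}(1-q^{-1})^{d(\phi(b))}\mathbf z^{\mathrm{wt}(b)}
=\Bigl(\sum_{b\in\mathbf B(k)}(1-q^{-1})^{d(\phi(b))}\mathbf z^{\mathrm{wt}(b)}\Bigr)\cdot\frac{1-q^{-1}\mathbf z^{\beta_{k-1}}}{1-\mathbf z^{\beta_{k-1}}},
\]
and the right-hand side equals $\prod_{\alpha\in\mathscr R(k)}\tfrac{1-q^{-1}\mathbf z^\alpha}{1-\mathbf z^\alpha}\cdot\tfrac{1-q^{-1}\mathbf z^{\beta_{k-1}}}{1-\mathbf z^{\beta_{k-1}}}=\prod_{\alpha\in\mathscr R(k-1)}\tfrac{1-q^{-1}\mathbf z^\alpha}{1-\mathbf z^\alpha}$ by the induction hypothesis, which closes the induction. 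The case $k>0$ is symmetric, using $T_{i_1}T_{i_2}\cdots$ and peeling off the factor $E_{i_{k+1}}^{(c_{k+1})}$ on the appropriate side.

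The main obstacle is the middle step: verifying that the factorization $E_{\mathbf c_+}=(\text{head in }B(k))\cdot T(\text{tail})$ is genuinely a bijection at the level of \emph{canonical basis elements} $\mathbf B(k-1)\leftrightarrow\mathbf B(k)\times\mathbb N$ rather than merely at the level of the PBW-type monomials $E_{\mathbf c_+}$, and that it is additive on $\mathrm{wt}$ and on the count $d(\phi(-))$. This requires invoking the precise compatibility of $T_i$ with the canonical basis and with the $\mod v^{-1}\mathbb Z[v^{-1}]$ congruence (the content of Proposition~1.8, due to \cite{BCP, BeckNa}), and checking that $T^{-1}_{i_0}$ applied to an element congruent to a PBW monomial remains congruent to the corresponding PBW monomial; once that is in hand, everything else is the elementary geometric-series manipulation displayed above. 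A secondary point to be careful about is convergence/formality: since $\mathscr R(k)$ is finite for each fixed $k$, both sides are honest elements of a suitable completion of $\mathbb Z[q^{-1}][[\,\mathbf z^{\alpha}:\alpha\in\Delta^+]]$ and the geometric series make sense termwise, so no analytic subtlety arises at this stage (it is only in passing to $k\to\pm\infty$, i.e.\ Theorem~\ref{thm-first}, that one must argue the limit stabilizes).
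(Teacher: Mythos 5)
Your proposal is correct and follows essentially the same route as the paper: induction on $k$ (upward for $k>0$, downward for $k\le 0$), peeling off the factor $\frac{1-q^{-1}\mathbf z^{\beta_k}}{1-\mathbf z^{\beta_k}}$ as a geometric series, and matching it against the bijection $\mathbf B(k)\leftrightarrow\mathbf B(k\mp 1)\times\mathbb N$ furnished by the Beck--Chari--Pressley--Nakajima parametrization of the canonical basis modulo $v^{-1}\mathbb Z[v^{-1}]$, which is exactly the uniqueness statement the paper also invokes at this point. (Only a cosmetic slip: for $k\le 0$ one has $\mathbf B(k)\subset\mathbf B(k-1)$, so the complement you mention should read $\mathbf B(k-1)\setminus\mathbf B(k)$; the bijection you actually use is stated correctly.)
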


\begin{proof} First we assume $k>0$ and use induction on $k$. If $k=1$, then the identity (\ref{eqn-id}) is easily verified. Now, using an induction argument, we obtain
\begin{eqnarray*}
\prod_{\alpha \in \mathscr R (k) } \frac {1- q^{-1} \mathbf z^\alpha}{1 - \mathbf z^\alpha} &=& \left ( \prod_{\alpha \in \mathscr R (k-1) } \frac {1- q^{-1} \mathbf z^\alpha}{1 - \mathbf z^\alpha} \right ) \frac {1- q^{-1} \mathbf z^{\beta_k} }{1 - \mathbf z^{\beta_k } } \\ &=&
 \left (  \sum_{b \in \mathbf B(k-1)} (1 - q^{-1})^{d \left ( \phi(b) \right ) } \mathbf z^{ \mathrm{wt}(b)} \right ) \left ( 1+ \sum_{j \ge 1 } (1-q^{-1}) \mathbf z^{j \beta_k} \right ) \\ &=& \sum_{b \in \mathbf B(k-1)} (1 - q^{-1})^{d \left ( \phi (b) \right ) } \mathbf z^{\mathrm{wt}(b)} + \sum_{j \ge 1 } \sum_{b \in \mathbf B(k-1)} (1 - q^{-1})^{d \left ( \phi (b) \right )+1 } \mathbf z^{\mathrm{wt}(b) +  j \beta_k } .
\end{eqnarray*}
 On the other hand, since $b' \in \mathbf B(k)$ satisfies \[ b' \equiv b \    T_{i_1} T_{i_2} \cdots T_{i_{k} }\left ( E_{k}^{(j)} \right ) \ \mathrm{mod} \ v^{-1} \mathbb Z[v^{-1}] \] for unique $b \in \mathbf B(k-1)$ and $j \ge 0$, we can write $\mathbf B(k)$ as a disjoint union \[ \mathbf B(k) = \bigcup_{j \ge 0} \{ b' \in \mathbf B(k) \ | \ \phi (b') = ( c_1, \dots, c_{k-1} , j,0 ,0, \dots ), \ c_i \in \mathbb N \}. \] Now it is clear that
 \begin{eqnarray*}
& & \sum_{b \in \mathbf B(k) } (1 - q^{-1})^{d \left ( \phi (b) \right ) } \mathbf z^{ \mathrm{wt}(b)} \\ &=& \sum_{b \in \mathbf B(k-1)} (1 - q^{-1})^{d \left ( \phi (b) \right ) } \mathbf z^{\mathrm{wt}(b)} + \sum_{j \ge 1 } \sum_{b \in \mathbf B(k-1)} (1 - q^{-1})^{d \left ( \phi (b) \right )+1 } \mathbf z^{\mathrm{wt}(b) +  j \beta_k} .
\end{eqnarray*} This completes the proof of the case $k>0$.

The case $k \le 0$ can be proved in a similar way through a downward induction.
\end{proof}

We set \[ \mathscr{R}_{>} = \bigcup_{k \le 0} \mathscr{R}(k) \quad \text{ and } \quad   \mathscr{R}_{<} = \bigcup_{k >0} \mathscr{R}(k).\]
Similarly, we put
\[ \mathbf B_{>} = \bigcup_{k \le 0} \mathbf B(k) \quad \text{ and } \quad   \mathbf B_{<} = \bigcup_{k >0} \mathbf B(k).\]

\begin{Cor} \label{cor-1}
We have
\begin{equation} \label{eqn-id-cor} \prod_{\alpha \in \mathscr R_>} \frac {1- q^{-1} \mathbf z^\alpha}{1 - \mathbf z^\alpha} =  \sum_{b \in \mathbf B_>} (1 - q^{-1})^{d \left ( \phi (b) \right ) } \mathbf z^{\mathrm{wt}(b)}.
\end{equation}
The same identity is true if $\mathscr R_>$ and $\mathbf B_>$ are replaced with  $\mathscr R_<$ and $\mathbf B_<$, respectively.
\end{Cor}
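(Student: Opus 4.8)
The plan is to obtain the Corollary as the limit of Proposition~\ref{prop-Weyl} as $k\to-\infty$ (resp. $k\to+\infty$); the only substantive point is that both sides of (\ref{eqn-id-cor}) are well-defined formal power series and that each of their coefficients stabilizes along the families $\{\mathscr R(k)\}$ and $\{\mathbf B(k)\}$. I would work in the ring $R$ of formal power series in the variables $\mathbf z^{\alpha_0},\dots,\mathbf z^{\alpha_n}$ with coefficients in $\mathbb Z[q^{-1}]$, equipped with the $(\mathbf z^{\alpha_0},\dots,\mathbf z^{\alpha_n})$-adic topology, and for $\gamma\in Q_+$ write $[\mathbf z^\gamma]$ for the functional extracting the coefficient of $\mathbf z^\gamma$. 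Two finiteness facts are used throughout: for every $\gamma\in Q_+$ the set $\{\eta\in Q_+:\gamma-\eta\in Q_+\}$ is finite, so only finitely many of the pairwise distinct positive roots $\beta_k$ satisfy $\gamma-\beta_k\in Q_+$; and $\dim\mathbf U^+_\gamma<\infty$, so $\{b\in\mathbf B:\mathrm{wt}(b)=\gamma\}$ is finite. Also, by construction $\mathscr R(0)\subseteq\mathscr R(-1)\subseteq\cdots$ and $\mathbf B(0)\subseteq\mathbf B(-1)\subseteq\cdots$, with unions $\mathscr R_>$ and $\mathbf B_>$ (for $k>0$ the analogous chains are increasing in $k$ with unions $\mathscr R_<$ and $\mathbf B_<$).

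For the product side, each factor satisfies $\frac{1-q^{-1}\mathbf z^\beta}{1-\mathbf z^\beta}=1+(1-q^{-1})\sum_{j\ge1}\mathbf z^{j\beta}\in 1+\mathbf z^\beta R$ for $\beta\in Q_+\setminus\{0\}$; hence, given $\gamma$, if $k_0\le0$ is chosen so small that every $\beta_k$ ($k\le0$) with $\gamma-\beta_k\in Q_+$ already lies in $\mathscr R(k_0)$, then for all $k\le k_0$ the extra factors coming from $\mathscr R(k)\setminus\mathscr R(k_0)$ do not change $[\mathbf z^\gamma]$, so $[\mathbf z^\gamma]\prod_{\alpha\in\mathscr R(k)}\frac{1-q^{-1}\mathbf z^\alpha}{1-\mathbf z^\alpha}$ is eventually constant; by definition this limit is $\prod_{\alpha\in\mathscr R_>}\frac{1-q^{-1}\mathbf z^\alpha}{1-\mathbf z^\alpha}$. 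For the sum side, $\phi(b)$, and hence $d(\phi(b))$, depends only on $b$ and not on which $\mathbf B(k)$ one regards $b$ as lying in; and given $\gamma$, the finite set $\{b\in\mathbf B_>:\mathrm{wt}(b)=\gamma\}$ is contained in some $\mathbf B(k_1)$, so $[\mathbf z^\gamma]\sum_{b\in\mathbf B(k)}(1-q^{-1})^{d(\phi(b))}\mathbf z^{\mathrm{wt}(b)}$ is eventually constant, with limit $\sum_{b\in\mathbf B_>}(1-q^{-1})^{d(\phi(b))}\mathbf z^{\mathrm{wt}(b)}$. Since Proposition~\ref{prop-Weyl} asserts the equality of these two expressions for every $k\le0$, passing to the limit $k\to-\infty$ yields (\ref{eqn-id-cor}); the assertion for $\mathscr R_<$ and $\mathbf B_<$ follows verbatim, letting $k\to+\infty$.

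I do not anticipate a genuine obstacle: the whole argument is simply an interchange of a limit with the coefficient functionals, justified by the two finiteness facts above. The one place calling for a little care is checking that the PBW-type parametrization $\phi$ is consistent across the nested family $\mathbf B(0)\subseteq\mathbf B(-1)\subseteq\cdots$ (so that $d(\phi(b))$ is unambiguous in the limiting sum), which is immediate from the uniqueness statement recalled before Proposition~\ref{prop-Weyl} together with the fact that the index $k$ only restricts the allowed support of $\mathbf c_+$ in the definitions of $B(k)$ and $E_{\mathbf c_+}$.
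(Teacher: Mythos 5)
Your proof is correct and matches the paper's (implicit) argument: the paper states Corollary \ref{cor-1} without proof, treating it exactly as the coefficientwise limit of Proposition \ref{prop-Weyl} along the nested families $\mathscr R(k)$ and $\mathbf B(k)$. The finiteness and stabilization checks you supply are the right justification and present no gap.
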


Let $\mathbf c_0 = (\rho^{(1)}, \rho^{(2)}, \dots , \rho^{(n)} )$ be a multi-partition with $n$ components, {\it i.e.} each component $\rho^{(i)}$ is a partition. We denote by $\mathcal P(n)$ the set of all multi-partitions with $n$ components.
Let $S_{\mathbf c_0}$ be defined as in \cite{BeckNa} (p. 352) and set \[ B_0 = \{ S_{\mathbf c_0} \ |\  \mathbf c_0 \in \mathcal P(n) \} .\]

\begin{Prop} \cite{BCP, BeckNa} For each $S_{\mathbf c_0} \in B_0$, there exists a unique $b \in \mathbf B$ such that  \begin{equation} \label{eqn-ec0} b \equiv S_{\mathbf c_{0} } \mod \ v^{-1} \mathbb Z[v^{-1}].\end{equation}
\end{Prop}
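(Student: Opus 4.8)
The plan is to deduce this from the PBW-type construction of the canonical basis of $\mathbf U^+$ due to Beck--Chari--Pressley \cite{BCP} and Beck--Nakajima \cite{BeckNa}, which is the framework we have already been using for $B(k)$. Recall that, with the fixed sequence $\mathbf h$, Beck's real root vectors $E_{\beta_k}$ ($k \in \mathbb Z$) account for the real positive roots, while the imaginary positive roots (all of the form $j\delta$, with multiplicity $n$) are handled inside the Heisenberg-type subalgebra; after the Schur-function change of variables these assemble into the elements $S_{\mathbf c_0}$, indexed by multi-partitions $\mathbf c_0 \in \mathcal P(n)$. The point is that the monomials
\[ E_{\mathbf c_-}\, S_{\mathbf c_0}\, E_{\mathbf c_+}, \qquad \mathbf c_- \in \mathscr C_<,\ \ \mathbf c_0 \in \mathcal P(n),\ \ \mathbf c_+ \in \mathscr C_>, \]
form a PBW basis of $\mathbf U^+$ adapted to $\mathbf h$, and $B_0$ is exactly the part of this basis supported on the imaginary root lattice $\mathbb Z_{\ge 0}\,\delta$.

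First I would invoke Lusztig's abstract characterization of canonical bases: if a weight space $\mathbf U^+_\nu$ carries a $\mathbb Z[v,v^{-1}]$-basis $\{P_x\}$ which is stable under the bar-involution up to a unipotent lower-triangular change of basis with coefficients in $\mathbb Z[v,v^{-1}]$ for a suitable partial order, then each coset $P_x + v^{-1}\sum_y \mathbb Z[v^{-1}]\,P_y$ contains a unique bar-invariant element, and these elements constitute the canonical basis. For the PBW basis above, Beck--Chari--Pressley establish the required triangularity of $\overline{\,\cdot\,}$ with respect to the appropriate ordering of the data $(\mathbf c_-,\mathbf c_0,\mathbf c_+)$ — in particular the ordering of multi-partitions — and this is the statement refined and corrected in \cite{BeckNa}. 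Applying the Lemma produces, for each PBW monomial $E_{\mathbf c_-}S_{\mathbf c_0}E_{\mathbf c_+}$, a unique canonical basis element congruent to it modulo $v^{-1}$ times the integral lattice; specializing to $\mathbf c_- = 0$ and $\mathbf c_+ = 0$ yields the assertion for the $S_{\mathbf c_0}$, with uniqueness automatic from the Lemma.

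The one delicate point — which is the technical heart of \cite{BCP, BeckNa} and not of anything new here — is the bar-involution on the imaginary part: the imaginary root vectors are not braid-group eigenvectors, and their behavior under $\overline{\,\cdot\,}$ is governed by the symmetric-function combinatorics of the Heisenberg subalgebra. This is precisely the subtlety addressed in \cite{BeckNa}, and it is why one must use the Schur-type elements $S_{\mathbf c_0}$ rather than naive monomials in the imaginary generators. I would therefore present the proof as a direct application of those results, after matching conventions — in particular our use of $v$ for the quantum parameter in place of $q$, and the choice $T_i = T''_{i,1}$ of braid operator from \cite{Lusz-Bk} — so that the hypotheses of Lusztig's Lemma hold in our normalization.
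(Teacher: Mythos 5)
The paper offers no proof of this proposition: it is stated as a direct citation of \cite{BCP, BeckNa}, being the $\mathbf c_+=\mathbf c_-=0$ case of the bijection $\mathbf B \leftrightarrow \mathscr C$ recorded later as Theorem \ref{thm-BeNa}. Your proposal correctly reconstructs the argument of those references (PBW basis adapted to $\mathbf h$, bar-involution triangularity on the Schur-type imaginary elements $S_{\mathbf c_0}$, and Lusztig's lemma producing a unique bar-invariant element in each coset), so it takes essentially the same route as the paper, which simply defers the substance to the cited works.
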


We denote by $\mathbf B_0$ the subset of $\mathbf B$ corresponding to $B_0$.  Using the same notation $\phi$ as we used for $\mathbf B(k)$, we define a function $\phi : \mathbf B_0 \rightarrow \mathcal P(n)$, $b \mapsto \mathbf c_0$,  such that the condition (\ref{eqn-ec0}) is satisfied. 
 
 For a partition $\mathbf p = (1^{m_1}2^{m_2} \cdots r^{m_r} \cdots)$, we define \[ d(\mathbf p) = \mathrm\#\{ r \, | \, m_r \neq 0 \} \quad \text { and } \quad |\mathbf p |= m_1+2 m_2+ 3 m_3 + \cdots .\] Then for a multi-partition $\mathbf c_0 = (\rho^{(1)}, \rho^{(2)}, \dots , \rho^{(n)} ) \in \mathcal P(n)$, we set \[ d(\mathbf c_0)= d(\rho^{(1)} ) + d(\rho^{(2)} ) + \cdot + d(\rho^{(n)} ). \] We obtain from the definition of $S_{\mathbf c_0}$ that if $\phi(b)= \mathbf c_0$ then \[ \mathrm{wt}(b) = | \mathbf c_0| \delta ,\] where $| \mathbf c_0 | = |\rho^{(1)}| + \cdots + |\rho^{(n)}|$ is the weight of the multi-partition $\mathbf c_0$.

\begin{Prop} \label{prop-imag}
We have 
\begin{equation} \label{eqn-main-1} \prod_{\alpha \in \Delta^+_{\mathrm{im} } } \left ( \frac {1- q^{-1} \mathbf z^\alpha}{1 - \mathbf z^\alpha} \right )^{\mathrm{mult} \, \alpha} =  \prod_{k=1}^\infty \left ( \frac {1- q^{-1} \mathbf z^{k \delta} }{1 - \mathbf z^{k \delta}} \right )^{n} =  \sum_{b \in \mathbf B_0} (1 - q^{-1})^{d \left ( \phi (b) \right ) } \mathbf z^{\mathrm{wt}(b)}, \end{equation}
where $\Delta^+_{\mathrm{im}}$ is the set of positive imaginary roots of $\mathfrak g$.
\end{Prop}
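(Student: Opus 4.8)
The plan is to establish the two equalities in \eqref{eqn-main-1} in turn. The first is purely structural: for an untwisted affine Kac-Moody algebra with $I=\{0,1,\dots,n\}$, the set of positive imaginary roots is $\Delta^+_{\mathrm{im}}=\{k\delta : k\ge 1\}$ and $\mathrm{mult}\,(k\delta)=n$ for every $k\ge 1$ (a standard fact; see Kac's book on infinite-dimensional Lie algebras). Grouping the factors of $\prod_{\alpha\in\Delta^+_{\mathrm{im}}}$ according to the integer $k$ with $\alpha=k\delta$ then gives at once
\[ \prod_{\alpha \in \Delta^+_{\mathrm{im}}} \left( \frac{1 - q^{-1}\mathbf z^\alpha}{1 - \mathbf z^\alpha} \right)^{\mathrm{mult}\,\alpha} = \prod_{k=1}^\infty \left( \frac{1 - q^{-1}\mathbf z^{k\delta}}{1 - \mathbf z^{k\delta}} \right)^n . \]

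For the second equality I would expand a single factor as a formal power series in $\mathbf z^\delta$,
\[ \frac{1 - q^{-1}\mathbf z^{k\delta}}{1 - \mathbf z^{k\delta}} = (1 - q^{-1}\mathbf z^{k\delta}) \sum_{m \ge 0} \mathbf z^{mk\delta} = 1 + (1 - q^{-1}) \sum_{m \ge 1} \mathbf z^{mk\delta} , \]
and then multiply over all $k\ge 1$ and over the $n$ copies. Since $\mathbf z^{k\delta}$ contributes only in degrees $\ge k$, the infinite product is a well-defined element of $\mathbb Z[q^{-1}][[\mathbf z^\delta]]$, and a monomial in its expansion is obtained by choosing, for each component index $i\in\{1,\dots,n\}$ and each part size $k\ge 1$, either the term $1$ or a single term $(1-q^{-1})\mathbf z^{m_k^{(i)} k\delta}$ with $m_k^{(i)}\ge 1$. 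Such a choice is exactly the datum of a multi-partition $\mathbf c_0=(\rho^{(1)},\dots,\rho^{(n)})\in\mathcal P(n)$, where $\rho^{(i)}$ has $m_k^{(i)}$ parts equal to $k$; the accompanying coefficient is $(1-q^{-1})^{d(\mathbf c_0)}$ — one factor $1-q^{-1}$ for each distinct part size occurring in each component, i.e.\ $\sum_i d(\rho^{(i)})=d(\mathbf c_0)$ factors — and the monomial itself is $\mathbf z^{|\mathbf c_0|\delta}$ with $|\mathbf c_0|=\sum_i|\rho^{(i)}|$. Hence
\[ \prod_{k=1}^\infty \left( \frac{1 - q^{-1}\mathbf z^{k\delta}}{1 - \mathbf z^{k\delta}} \right)^n = \sum_{\mathbf c_0 \in \mathcal P(n)} (1 - q^{-1})^{d(\mathbf c_0)} \, \mathbf z^{|\mathbf c_0|\delta} . \]

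Finally I would compare the right-hand side of \eqref{eqn-main-1} with this series. By the Proposition on the $S_{\mathbf c_0}$ recalled just before the statement, $\phi\colon\mathbf B_0\to\mathcal P(n)$ is a bijection, $d(\phi(b))=d(\mathbf c_0)$ by definition, and $\mathrm{wt}(b)=|\mathbf c_0|\delta$ whenever $\phi(b)=\mathbf c_0$; re-indexing $\sum_{b\in\mathbf B_0}$ by $\mathbf c_0=\phi(b)$ then reproduces the series above, which finishes the proof. I do not anticipate a genuine obstacle here: the lone substantive ingredient is the cited identification of $\mathbf B_0$ with $\mathcal P(n)$ together with the weight formula $\mathrm{wt}(b)=|\phi(b)|\delta$, and everything else is a bookkeeping expansion of an infinite product; the only point worth stating explicitly is that all the manipulations take place in the formal power series ring $\mathbb Z[q^{-1}][[\mathbf z^\delta]]$, so that no question of convergence arises.
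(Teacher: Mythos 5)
Your argument is correct and follows essentially the same route as the paper: both rest on the expansion $\frac{1-q^{-1}\mathbf z^{k\delta}}{1-\mathbf z^{k\delta}} = 1+(1-q^{-1})\sum_{m\ge 1}\mathbf z^{mk\delta}$, the identification of the terms of the resulting product with multi-partitions (the paper does $n=1$, then $n=2$, then notes the general case; you handle general $n$ in one step), and the cited bijection $\phi\colon\mathbf B_0\to\mathcal P(n)$ with $\mathrm{wt}(b)=|\phi(b)|\,\delta$. No gaps.
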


\begin{proof}
The first equality follows from the facts $\Delta^+_{\mathrm {im}} = \{ \delta , 2 \delta, 3 \delta , \dots \}$ and $\mathrm{mult}(k \delta ) =n$ for all $k =1, 2, \dots$. Now we consider the second equality and assume $n=1$. Then we have
\begin{equation} \label {eqn-delta}
\prod_{k=1}^\infty \left ( \frac {1- q^{-1} \mathbf z^{k \delta} }{1 - \mathbf z^{k \delta}} \right ) = \prod_{k=1}^\infty \left ( 1 + \sum_{j=1}^\infty (1 - q^{-1}) \mathbf z^{jk\delta} \right ).
\end{equation}
We consider the generating function of the partition function $p(m)$:

\begin{equation} \sum_{m=0}^\infty p(m) \mathbf z^{m \delta}=\prod_{k=1}^\infty \left ( 1 + \sum_{j=1}^\infty  \mathbf z^{jk\delta} \right ) = \sum_{\rho^{(1)} \in \mathcal P(1)} \mathbf z^{| \rho^{(1)} | \delta} = \sum_{b \in \mathbf B_0} \mathbf z^{\mathrm{wt}(b)}. \label{eqn-delta-1}
\end{equation}

Comparing (\ref{eqn-delta}) and (\ref{eqn-delta-1}), we see that if we expand the product in the right-hand side of (\ref{eqn-delta}) into a sum, the coefficient of $\mathbf z^{| \rho^{(1)} | \delta}$   will be a power of $(1-q^{-1})$ and that the exponent of $(1-q^{-1})$ is exactly the number $d (\rho^{(1)})$. Therefore, we obtain 
\begin{eqnarray*}
\prod_{k=1}^\infty \left ( \frac {1- q^{-1} \mathbf z^{k \delta} }{1 - \mathbf z^{k \delta}} \right )  &=& 
\sum_{\rho^{(1)} \in \mathcal P(1)} (1-q^{-1})^{d(\rho^{(1)}) } \mathbf z^{|\rho^{(1)}| \delta} \\ &=& \sum_{b \in \mathbf B_0}  (1-q^{-1})^{d(b) } \mathbf z^{\mathrm{wt}(b)} .
\end{eqnarray*}

Next we assume that $n=2$. Then we have
\begin{eqnarray*}
& & \prod_{k=1}^\infty \left ( \frac {1- q^{-1} \mathbf z^{k \delta} }{1 - \mathbf z^{k \delta}} \right )^2 \\&=& \left ( \sum_{\rho^{(1)} \in \mathcal P(1)} (1-q^{-1})^{d(\rho^{(1)}) } \mathbf z^{|\rho^{(1)}| \delta} \right ) \left ( \sum_{\rho^{(2)} \in \mathcal P(1)} (1-q^{-1})^{d(\rho^{(2)}) } \mathbf z^{|\rho^{(2)}| \delta} \right ) \\ &=& \sum_{(\rho^{(1)}, \rho^{(2)}) \in \mathcal P(2)} (1 -q^{-1})^{d(\rho^{(1)}) + d(\rho^{(2)}) } \mathbf z^{\left ( | \rho^{(1)} | +  | \rho^{(2)} | \right ) \delta } \\ &=& \sum_{b \in \mathbf B_0}  (1-q^{-1})^{d(b) } \mathbf z^{\mathrm{wt}(b)} . 
\end{eqnarray*}
It is now clear that this argument naturally generalizes to the case $n>2$.
\end{proof}

\medskip

Let us consider the correction factor $A$ in (\ref{eqn-correction}). We will make a modification of the formula (\ref{eqn-main-1}) to write $A$ as a sum over $\mathbf B_0$ in the case when the underlying classical Lie algebra $\frak g_{\mathrm{cl}}$ is simply-laced. For a partition $\mathbf p =(1^{m_1}2^{m_2} \cdots )$ and $d_i \in \mathbb N$, we define 
\[ Q_{d_i}(\mathbf p, j ) = \begin{cases} (1 - q) q^{-(d_i+1) m_j } & \text{if } m_j \neq 0,  \\ 1 &  \text{if } m_j = 0 ,  \end{cases} \quad \text{ and } \quad Q_{d_i}(\mathbf p) = \prod_{j=1}^\infty Q_{d_i}(\mathbf p, j ).\]
For a multi-partition $\mathbf p=(\rho^{(1)}, \dots , \rho^{(n)})$ and $d_i \in \mathbb N$, $i= 1, \dots , n$, we define 
\[ Q_{d_1, \dots, d_n}(\mathbf p)= \prod_{i=1}^n  \quad Q_{d_i}(\rho^{(i)}) .\]  Then we obtain the following.

\begin{Cor} Assume that $\frak g_{\mathrm{cl}}$ is simply-laced. Then we have
$$A= \prod_{i=1}^n \prod_{j=1}^{\infty} \frac {1-q^{-d_i}{\bold z}^{j\delta}}{1-q^{-d_i-1}{\bold z}^{j\delta}} =  \sum_{b \in \mathbf B_0}Q(\phi(b)) \mathbf z^{\mathrm{wt}(b)},
$$
where $d_i$'s are the exponents of $\frak g_{\mathrm{cl}}$ and we write $Q (\mathbf p) = Q_{d_1, \dots, d_n}(\mathbf p)$.
\end{Cor}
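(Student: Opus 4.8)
The plan is to mimic the proof of Proposition~\ref{prop-imag}: establish the identity first in the ``rank-one'' case of a single exponent $d_i$, and then take the product over $i = 1, \dots, n$. Since the first equality in the statement is exactly the formula of Braverman--Kazhdan--Patnaik recalled in (\ref{eqn-correction}), only the second equality needs to be proved, and again the product over positive imaginary roots is already organized as a product $\prod_i \prod_j$ indexed by a partition in each of the $n$ components.

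First I would carry out the rank-one computation. Fix $d_i$, expand the geometric series, and telescope:
\[ \frac{1 - q^{-d_i}\mathbf z^{j\delta}}{1 - q^{-d_i-1}\mathbf z^{j\delta}} = (1 - q^{-d_i}\mathbf z^{j\delta})\sum_{m \ge 0} q^{-(d_i+1)m}\mathbf z^{jm\delta} = 1 + \sum_{m \ge 1}(1-q)\,q^{-(d_i+1)m}\,\mathbf z^{jm\delta}, \]
where the last step uses $q^{-(d_i+1)m} - q^{-d_i}q^{-(d_i+1)(m-1)} = (1-q)q^{-(d_i+1)m}$. Taking the product over $j \ge 1$ and reading off the coefficient of $\mathbf z^{N\delta}$ exactly as in (\ref{eqn-delta})--(\ref{eqn-delta-1}): a choice of exponents $m_j \ge 0$ with $\sum_j j m_j = N$ contributes $\prod_{j:\,m_j \neq 0}(1-q)q^{-(d_i+1)m_j}$, which is precisely $Q_{d_i}(\mathbf p)$ for the partition $\mathbf p = (1^{m_1}2^{m_2}\cdots)$. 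Hence
\[ \prod_{j=1}^{\infty}\frac{1 - q^{-d_i}\mathbf z^{j\delta}}{1 - q^{-d_i-1}\mathbf z^{j\delta}} = \sum_{\rho \in \mathcal P(1)} Q_{d_i}(\rho)\,\mathbf z^{|\rho|\delta}. \]

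Next I would multiply these identities over $i = 1, \dots, n$. Since for each fixed $N$ only finitely many terms contribute to the coefficient of $\mathbf z^{N\delta}$, the product may be expanded termwise, and grouping the factors by the components of a multi-partition gives
\[ A = \prod_{i=1}^{n}\sum_{\rho^{(i)} \in \mathcal P(1)} Q_{d_i}(\rho^{(i)})\,\mathbf z^{|\rho^{(i)}|\delta} = \sum_{\mathbf p = (\rho^{(1)},\dots,\rho^{(n)}) \in \mathcal P(n)} \left(\prod_{i=1}^{n} Q_{d_i}(\rho^{(i)})\right) \mathbf z^{|\mathbf p|\delta} = \sum_{\mathbf p \in \mathcal P(n)} Q(\mathbf p)\,\mathbf z^{|\mathbf p|\delta}. \]
Finally, invoking the bijection $\phi : \mathbf B_0 \to \mathcal P(n)$ and the identity $\mathrm{wt}(b) = |\phi(b)|\delta$ recorded just before Proposition~\ref{prop-imag} rewrites the right-hand side as $\sum_{b \in \mathbf B_0} Q(\phi(b))\,\mathbf z^{\mathrm{wt}(b)}$, which is the claim.

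I do not expect a serious obstacle: this is a bookkeeping variant of Proposition~\ref{prop-imag}. The only points needing care are the telescoping step that converts the geometric-series expansion of each rank-one factor into the clean coefficient $(1-q)q^{-(d_i+1)m}$ matching the definition of $Q_{d_i}(\mathbf p, j)$, and keeping straight the two distinct roles of the index $j$ --- the size of a part of a partition on the combinatorial side versus the multiple of $\delta$ on the product side --- so that the exponent of $(1-q)$ counts exactly the distinct parts occurring in each component.
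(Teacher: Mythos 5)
Your proposal is correct and is exactly the argument the paper intends: the paper cites \cite{BKP} for the first equality and disposes of the second with ``a similar argument as in the proof of Proposition \ref{prop-imag},'' which is precisely the telescoped geometric-series expansion and component-wise product over multi-partitions that you carry out. Your rank-one identity $\frac{1-q^{-d_i}\mathbf z^{j\delta}}{1-q^{-d_i-1}\mathbf z^{j\delta}} = 1 + \sum_{m\ge 1}(1-q)q^{-(d_i+1)m}\mathbf z^{jm\delta}$ checks out and matches the definition of $Q_{d_i}(\mathbf p, j)$, so you have in fact supplied the details the paper omits.
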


\begin{proof}
The first equality is a result in \cite{BKP}. The second equality can be obtained using a similar argument as in the proof of Proposition \ref{prop-imag}.
\end{proof}

\medskip

Let $\mathscr C= \mathscr C_> \times \mathcal P(n) \times \mathscr C_<$ as in \cite{BeckNa}. 
\begin{Thm} \cite{BCP, BeckNa}  \label{thm-BeNa} There is a bijection between the sets $\mathbf B$ and $\mathscr C$ such that for each $\mathbf c = (\mathbf c_+, \mathbf c_0, \mathbf c_-) \in \mathscr C$, there exists a unique $b \in \mathbf B$ such that  \begin{equation} \label{eqn-ec-final} b \equiv E_{\mathbf c_+} S_{\mathbf c_{0} } E_{\mathbf c_-} \mod \ v^{-1} \mathbb Z[v^{-1}].\end{equation}
\end{Thm}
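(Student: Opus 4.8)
The plan is to deduce the statement from the fact that the monomials $E_{\mathbf c_+} S_{\mathbf c_0} E_{\mathbf c_-}$, as $\mathbf c = (\mathbf c_+, \mathbf c_0, \mathbf c_-)$ ranges over $\mathscr C$, form a PBW-type basis of $\mathbf U^+$, together with the standard triangularity argument relating any such PBW basis to the canonical basis. Since $\mathbf U^+ = \bigoplus_{\nu \in Q_+} \mathbf U^+_\nu$ with each $\mathbf U^+_\nu$ finite-dimensional, and the set $\{ \mathbf c \in \mathscr C : \mathrm{wt}(E_{\mathbf c_+} S_{\mathbf c_0} E_{\mathbf c_-}) = \nu \}$ is finite, it suffices to argue one weight space at a time, establishing (i) that these monomials span $\mathbf U^+_\nu$, (ii) that they are linearly independent, and (iii) that the transition matrix to the canonical basis is unitriangular with off-diagonal entries in $v^{-1}\mathbb Z[v^{-1}]$, after which bijectivity and (\ref{eqn-ec-final}) follow formally.

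First I would fix the convex total order on $\Delta^+$ attached to $\mathbf h$, in which the real roots from $\mathscr{R}_{>}$ come first (in the order $\dots, \beta_{-1}, \beta_0$), then the imaginary roots $\delta, 2\delta, \dots$, and finally the real roots from $\mathscr{R}_{<}$ (in the order $\dots, \beta_2, \beta_1$), and verify that it is convex in Beck's sense. The real root vectors $E_{\beta_k}$ are built from the $T_i$ as above, and the imaginary root vectors are packaged into the Schur-type elements $S_{\mathbf c_0}$ following Beck--Chari--Pressley; the assertion that ordered products in this order give a basis of $\mathbf U^+$ is the weight-graded form of Beck's PBW theorem for untwisted affine quantum groups, with the imaginary block treated as in \cite{BCP}. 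Concretely, (i) and (ii) reduce to the vector-space factorization $\mathbf U^+ \cong \mathbf U^+(\mathscr{R}_{>}) \otimes \mathbf U^+_{\mathrm{im}} \otimes \mathbf U^+(\mathscr{R}_{<})$, where the two outer factors already carry PBW bases $\{ E_{\mathbf c_+} \}$ and $\{ E_{\mathbf c_-} \}$ (by the Proposition on the half-infinite pieces $B(k)$) and the middle factor carries the basis $\{ S_{\mathbf c_0} \}$ (by the Proposition on $B_0$).

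For (iii) I would establish bar-triangularity. The $E_{\beta_k}$ are not bar-invariant, but the familiar computation (Lusztig in finite type; Beck and BCP in affine type) shows $\overline{E_{\mathbf c}} \in E_{\mathbf c} + \sum_{\mathbf c' \prec \mathbf c} \mathbb Z[v, v^{-1}]\, E_{\mathbf c'}$, where $\prec$ is the bi-lexicographic refinement of the convex order transported to $\mathscr C$; here it is crucial that the imaginary contributions enter via the Schur functions $S_{\mathbf c_0}$ and not via naive monomials in the imaginary root vectors, precisely so that $\overline{S_{\mathbf c_0}}$ is triangular and so that the $E_{\mathbf c}$ form a $\mathbb Z[v, v^{-1}]$-basis of the integral form lying in, and spanning modulo $v^{-1}$, Kashiwara's crystal lattice $\mathcal L(\infty)$. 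Lusztig's lemma then produces, for each $\mathbf c$, a unique bar-invariant $b_{\mathbf c} \equiv E_{\mathbf c} \bmod v^{-1}\mathcal L(\infty)$; these $b_{\mathbf c}$ are exactly the elements of $\mathbf B$, and $\mathbf c \mapsto b_{\mathbf c}$ is the asserted bijection with (\ref{eqn-ec-final}) built in.

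The main obstacle is step (iii): controlling the bar involution on the glued monomial $E_{\mathbf c_+} S_{\mathbf c_0} E_{\mathbf c_-}$ with respect to a single order on $\mathscr C$. The two real halves are governed by the finite-type relations between the $T_i$ and the bar involution, but the interaction across the imaginary wall --- the behaviour of $\overline{S_{\mathbf c_0}}$ and the commutators of the $E_{\beta_k}$ with the imaginary root vectors --- is delicate, and it is exactly here that the Beck--Chari--Pressley choice of $S_{\mathbf c_0}$ (and the resulting near-orthonormality with respect to Kashiwara's bilinear form) is needed. Once that is in place, the remainder is the routine Lusztig-lemma machinery applied weight space by weight space.
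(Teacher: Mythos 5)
The paper offers no proof of this statement at all---it is quoted directly from \cite{BCP, BeckNa}---and your outline is essentially the argument of those references: a PBW-type basis attached to a convex order on $\Delta^+$ with the imaginary block packaged into the elements $S_{\mathbf c_0}$, bar-triangularity of the resulting monomials, and Lusztig's lemma applied weight space by weight space. Your sketch correctly locates the genuine difficulty (the behaviour of $\overline{S_{\mathbf c_0}}$ and the commutation across the imaginary wall, which is precisely what the Beck--Chari--Pressley construction of $S_{\mathbf c_0}$ is designed to control), but it defers those verifications to the cited sources rather than carrying them out, so it should be read as a correct roadmap to the literature rather than a self-contained proof---which is consistent with how the paper itself treats the statement.
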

Then we naturally extend the function $\phi$ to a bijection of $\mathbf B$ onto $\mathscr C$ and the number $d(\mathbf c)$ is also defined by $d(\mathbf c)=d(\mathbf c_+)+d(\mathbf c_0)+d(\mathbf c_-)$ for each $\mathbf c \in \mathscr C$.

\begin{Thm} \label{thm-first} We have 
\begin{equation} \label{eqn-main-2} \prod_{\alpha \in \Delta^+} \left ( \frac {1- q^{-1} \mathbf z^\alpha}{1 - \mathbf z^\alpha} \right )^{\mathrm{mult} \, \alpha} =  \sum_{b \in \mathbf B} (1 - q^{-1})^{d \left ( \phi (b) \right ) } \mathbf z^{\mathrm{wt}(b)}.
\end{equation}
\end{Thm}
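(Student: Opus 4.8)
The plan is to combine the three pieces that the paper has already established---Corollary \ref{cor-1} (the real-root half-products over $\mathscr R_>$ and $\mathscr R_<$), Proposition \ref{prop-imag} (the imaginary-root product over $\Delta^+_{\mathrm{im}}$), and Theorem \ref{thm-BeNa} (the PBW-type factorization of the canonical basis $\mathbf B$ as $\mathbf B_> \times \mathbf B_0 \times \mathbf B_<$)---into a single product identity. The key observation is that, by Beck's convex ordering of $\Delta^+$ adapted to the doubly-infinite sequence $\mathbf h$, every positive real root occurs exactly once, either as some $\beta_k$ with $k\le 0$ (giving $\mathscr R_>$) or as some $\beta_k$ with $k>0$ (giving $\mathscr R_<$), while the positive imaginary roots are exactly $\{k\delta\}_{k\ge 1}$ each with multiplicity $n$. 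Hence the full left-hand side of \eqref{eqn-main-2} factors as
\begin{equation*}
\prod_{\alpha\in\Delta^+}\left(\frac{1-q^{-1}\mathbf z^\alpha}{1-\mathbf z^\alpha}\right)^{\mathrm{mult}\,\alpha}
= \left(\prod_{\alpha\in\mathscr R_>}\frac{1-q^{-1}\mathbf z^\alpha}{1-\mathbf z^\alpha}\right)
\left(\prod_{\alpha\in\Delta^+_{\mathrm{im}}}\left(\frac{1-q^{-1}\mathbf z^\alpha}{1-\mathbf z^\alpha}\right)^{\mathrm{mult}\,\alpha}\right)
\left(\prod_{\alpha\in\mathscr R_<}\frac{1-q^{-1}\mathbf z^\alpha}{1-\mathbf z^\alpha}\right),
\end{equation*}
and I apply Corollary \ref{cor-1} to the first and third factors and Proposition \ref{prop-imag} to the middle one.

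The second step is to multiply the three resulting sums over $\mathbf B_>$, $\mathbf B_0$, $\mathbf B_<$ and recognize the product as a single sum over $\mathbf B$. Here I use Theorem \ref{thm-BeNa}: the map $b\mapsto \phi(b)=(\mathbf c_+,\mathbf c_0,\mathbf c_-)$ is a bijection $\mathbf B\to\mathscr C_>\times\mathcal P(n)\times\mathscr C_<$, and the weight is additive, $\mathrm{wt}(b)=\mathrm{wt}(b_+)+|\mathbf c_0|\delta+\mathrm{wt}(b_-)$ under the correspondence $b\leftrightarrow (b_+,b_0,b_-)$, because $E_{\mathbf c_+}S_{\mathbf c_0}E_{\mathbf c_-}$ is homogeneous of the sum of the three weights. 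Likewise the exponent of $(1-q^{-1})$ is additive by the very definition $d(\mathbf c)=d(\mathbf c_+)+d(\mathbf c_0)+d(\mathbf c_-)$. Therefore
\begin{equation*}
\left(\sum_{b_+\in\mathbf B_>}(1-q^{-1})^{d(\phi(b_+))}\mathbf z^{\mathrm{wt}(b_+)}\right)
\left(\sum_{b_0\in\mathbf B_0}(1-q^{-1})^{d(\phi(b_0))}\mathbf z^{\mathrm{wt}(b_0)}\right)
\left(\sum_{b_-\in\mathbf B_<}(1-q^{-1})^{d(\phi(b_-))}\mathbf z^{\mathrm{wt}(b_-)}\right)
= \sum_{b\in\mathbf B}(1-q^{-1})^{d(\phi(b))}\mathbf z^{\mathrm{wt}(b)},
\end{equation*}
which is exactly the right-hand side of \eqref{eqn-main-2}.

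The one point that needs genuine care---and what I expect to be the main obstacle---is the convergence/formal-series justification of splitting the infinite product over $\Delta^+$ into the three sub-products and of multiplying three infinite sums term by term. Since all three factors are power series in $\mathbf z$ with no constant-term issues (each is $1+(\text{higher order})$ in an appropriate grading, and only finitely many roots contribute to any fixed weight---real roots have bounded-below "level" in the null direction and imaginary contributions are graded by $\delta$), the products and sums converge in the $\mathfrak z$-adic (or weight-graded) completion of $\mathbb Z[q^{-1}][[\mathbf z]]$, and rearrangement is legitimate. I would state this once, citing the grading of $\mathbf U^+$ by the root lattice together with finiteness of each graded piece, and the fact that Beck's ordering of $\Delta^+$ is a convex order compatible with this grading, so that the factorization of the product matches the PBW factorization of $\mathbf B$ monomial-by-monomial. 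With that in place the proof is just the chain of equalities above; I would present it in three or four displayed lines.
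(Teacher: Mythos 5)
Your proof is correct and follows exactly the route the paper takes: split $\Delta^+$ into $\mathscr R_>$, $\Delta^+_{\mathrm{im}}$, $\mathscr R_<$, apply Corollary \ref{cor-1} and Proposition \ref{prop-imag} to the three factors, and recombine via the bijection of Theorem \ref{thm-BeNa} using the additivity of $\mathrm{wt}$ and of $d(\mathbf c)=d(\mathbf c_+)+d(\mathbf c_0)+d(\mathbf c_-)$. The paper states this in one line; your version merely makes the same argument explicit, including the (correct but routine) remark on rearranging the formal products.
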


\begin{proof}
Recall that $\Delta^+ = \Delta^+_{\mathrm{re}} \cup \Delta^+_{\mathrm{im}}$, $\Delta^+_{\mathrm{re}} = \mathscr R_> \cup \mathscr R_<$ and $\mathrm{mult} \ \alpha =1$ for $\alpha \in \Delta^+_{\mathrm{re}}$. Then the identity of the theorem follows from Corollary \ref{cor-1}, Proposition \ref{prop-imag} and Theorem \ref{thm-BeNa}.
\end{proof}

\medskip

\section{Casselman-Shalika Formula}

For the functions 
$\mathbf c_+= (c_0, c_{-1}, c_{-2} , \dots ) \in \mathscr C_>$ and $\mathbf c_- = (c_1, c_2, \dots ) \in \mathscr C_<$, we define \[ | \mathbf c_+ | = c_0 + c_{-1}+c_{-2}+\cdots \quad \text{ and } \quad | \mathbf c_- |= c_1 +c_2 + \cdots .\] For a multi-partition $\mathbf c_0 = (\rho^{(1)}, \rho^{(2)}, \dots , \rho^{(n)} ) \in \mathcal P(n)$, we set $| \mathbf c_0 | = |\rho^{(1)}| + \cdots + |\rho^{(n)}|$, as we did in the previous section.

Using similar arguments in the previous section, we obtain the following identities. 
\begin{Prop} \label{prop-collection}  \hfill
\begin{enumerate}
 \item We have for each $k \in \mathbb Z$, \begin{equation*}  \prod_{\alpha \in \mathscr R(k)} ( 1- q^{-1} \mathbf z^\alpha )^{-1} =  \sum_{b \in \mathbf B(k)}  q^{-| \phi (b)  | } \mathbf z^{\mathrm{wt}(b)}.
\end{equation*}

\item 
\begin{equation*} \prod_{\alpha \in \mathscr R_>} ( {1- q^{-1} \mathbf z^\alpha})^{-1} =  \sum_{b \in \mathbf B_>}  q^{-| \phi (b) | } \mathbf z^{\mathrm{wt}(b)}.
\end{equation*}
The same identity is true if $\mathscr R_>$ and $\mathbf B_>$ are replaced with  $\mathscr R_<$ and $\mathbf B_<$, respectively.

\item
 \begin{equation*}  \prod_{\alpha \in \Delta^+_{\mathrm{im} } } \left ( 1- q^{-1} \mathbf z^\alpha \right )^{- \mathrm{mult} \, \alpha} =  \prod_{k=1}^\infty \left ( 1- q^{-1} \mathbf z^{k \delta}  \right )^{-n} =  \sum_{b \in \mathbf B_0} q^{-| \phi (b) | } \mathbf z^{\mathrm{wt}(b)}. \end{equation*}

\item \begin{equation*}  \prod_{\alpha \in \Delta^+} \left ( 1- q^{-1} \mathbf z^\alpha \right )^{-\mathrm{mult} \, \alpha} =  \sum_{b \in \mathbf B} q^{-| \phi (b) | } \mathbf z^{\mathrm{wt}(b)}.
\end{equation*}
\end{enumerate}
\end{Prop}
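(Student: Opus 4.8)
The plan is to adapt, essentially line for line, the chain Proposition~\ref{prop-Weyl} $\to$ Corollary~\ref{cor-1} $\to$ Proposition~\ref{prop-imag} $\to$ Theorem~\ref{thm-first} of Section~1, replacing the local factor $\frac{1-q^{-1}\mathbf z^\alpha}{1-\mathbf z^\alpha}=1+\sum_{j\ge1}(1-q^{-1})\mathbf z^{j\alpha}$ throughout by the plain geometric series $(1-q^{-1}\mathbf z^\alpha)^{-1}=\sum_{j\ge0}q^{-j}\mathbf z^{j\alpha}$, and correspondingly replacing the statistic $d(\phi(b))$ (the number of nonzero entries of the string $\phi(b)$) by $|\phi(b)|$ (the sum of the entries, i.e.\ the total multiplicity). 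All four identities are understood in the ring of formal series in $\mathbf z$ with coefficients in $\mathbb Z[q^{-1}]$, graded by the positive root lattice, where each graded component is a finite sum so that the infinite products are well defined; I would note this once at the outset.

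For part (1), fix $k>0$ and induct on $k$ exactly as in Proposition~\ref{prop-Weyl}. The case $k=1$ is the single identity $(1-q^{-1}\mathbf z^{\beta_1})^{-1}=\sum_{c_1\ge0}q^{-c_1}\mathbf z^{c_1\beta_1}$. For the inductive step, multiply the hypothesis for $\mathscr R(k-1)$ by $(1-q^{-1}\mathbf z^{\beta_k})^{-1}=\sum_{j\ge0}q^{-j}\mathbf z^{j\beta_k}$ and invoke the same structural input from \cite{BCP, BeckNa} already used in Section~1: $\mathbf B(k)$ is the disjoint union over $j\ge0$ of the $b'$ with $\phi(b')=(c_1,\dots,c_{k-1},j,0,\dots)$, and such a $b'$ satisfies $\mathrm{wt}(b')=\mathrm{wt}(b)+j\beta_k$ and $|\phi(b')|=|\phi(b)|+j$ for the unique $b\in\mathbf B(k-1)$ with $\phi(b)=(c_1,\dots,c_{k-1},0,\dots)$. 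Matching terms gives the claim, and the case $k\le0$ is the same downward induction. Part (2) then follows by taking the union over $k$, since every $\mathbf z$-monomial lies in the span of $\mathbf B(k)$ for all sufficiently large $|k|$ with a stabilizing coefficient; this is verbatim the argument of Corollary~\ref{cor-1}.

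For part (3) I would mimic Proposition~\ref{prop-imag}. With $n=1$, expand $\prod_{k\ge1}(1-q^{-1}\mathbf z^{k\delta})^{-1}=\prod_{k\ge1}\sum_{j_k\ge0}q^{-j_k}\mathbf z^{j_k k\delta}$; a choice of exponents $(j_k)$ is the same datum as a partition $\rho$ with $j_k$ parts equal to $k$, contributing $q^{-\sum_k j_k}\mathbf z^{(\sum_k k j_k)\delta}$, and summing over $\rho\in\mathcal P(1)$ identifies the product with $\sum_{b\in\mathbf B_0}q^{-|\phi(b)|}\mathbf z^{\mathrm{wt}(b)}$. The case $n>1$ factors as a product of $n$ such identities, one per component of the multi-partition, exactly as in Proposition~\ref{prop-imag}. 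The one point worth flagging is that, in contrast with the Gindikin--Karpelevich side where $(1-q^{-1})^{d(\cdot)}$ only records which part-sizes occur, the weight $q^{-|\phi(b)|}$ is multiplicative in the multiplicities, so for $b\in\mathbf B_0$ the relevant statistic entering the Casselman--Shalika identity is the \emph{total number of imaginary root factors}, i.e.\ the total number of parts (with multiplicity) of the multi-partition $\phi(b)$; this is the only place where the two cases diverge in substance rather than in formula. Finally, part (4) is assembled from (2) (applied to both $\mathscr R_>$ and $\mathscr R_<$) and (3) via the Beck--Nakajima bijection $\mathbf B\cong\mathscr C_>\times\mathcal P(n)\times\mathscr C_<$ of Theorem~\ref{thm-BeNa}: under $b\equiv E_{\mathbf c_+}S_{\mathbf c_0}E_{\mathbf c_-}$ both $\mathrm{wt}$ and $|\phi|$ are additive over the three factors, while $\Delta^+=\Delta^+_{\mathrm{re}}\sqcup\Delta^+_{\mathrm{im}}$ with $\Delta^+_{\mathrm{re}}=\mathscr R_>\sqcup\mathscr R_<$ and $\mathrm{mult}\,\alpha=1$ on real roots, so the product over $\Delta^+$ and the sum over $\mathbf B$ each split into three matching factors; this is Theorem~\ref{thm-first}'s argument with $(1-q^{-1})^{d}$ replaced by $q^{-|\phi|}$.

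The main obstacle is not analytic or algebraic at all — once Theorem~\ref{thm-BeNa} and the combinatorics of the $\mathbf B(k)$ are granted (all imported), everything is a routine transcription. The only thing demanding care is consistent bookkeeping of the statistic through the gluing of the real and imaginary sectors: ensuring that $|\phi(b)|$ for $b\in\mathbf B_0$ is read as the number of imaginary root factors (the number of parts of the multi-partition), so that the three-factor product/sum identities in part (4) match term by term.
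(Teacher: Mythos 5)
Your proposal is correct and follows exactly the route the paper intends: the paper's entire proof of this proposition is the single remark that it follows ``using similar arguments in the previous section,'' i.e.\ the chain Proposition~\ref{prop-Weyl} $\to$ Corollary~\ref{cor-1} $\to$ Proposition~\ref{prop-imag} $\to$ Theorem~\ref{thm-first} with $\frac{1-q^{-1}\mathbf z^\alpha}{1-\mathbf z^\alpha}$ replaced by $(1-q^{-1}\mathbf z^\alpha)^{-1}$ and $d(\phi(b))$ by $|\phi(b)|$, which is precisely what you carry out. Your flag on part (3) is moreover a legitimate catch: since the paper defines $|\mathbf c_0|$ as the \emph{weight} of the multi-partition while the expansion of $\prod_k(1-q^{-1}\mathbf z^{k\delta})^{-n}$ produces $q^{-1}$ raised to the \emph{number of parts}, the statistic on $\mathbf B_0$ must be read as the number of parts for the stated identity to hold, a point the paper glosses over.
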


\medskip

Let $P_+= \{ \lambda \in P | \langle  h_i , \lambda \rangle \ge 0 \text{ for all } i \in I \}$. Recall that  the irreducible $\frak g$-module $V(\lambda)$ is integrable if and only if  $\lambda \in P_+$ (\cite{Kac}, Lemma 10.1).

\begin{Def} \label{def-a} Let $\lambda \in P_+$.  We define $H_{\lambda}(\cdot; q) : Q_+ \rightarrow \mathbb Z[q^{-1}]$ using the generating series \begin{eqnarray*} \sum_{\mu \in Q_+} H_{\lambda}(\mu; q ) \mathbf z^{\lambda - \mu}  &=& \sum_{w \in {W}} (-1)^{\ell(w)} \sum_{b \in \mathbf B}  (1-q^{-1})^{d(\phi (b) ) } \mathbf z^{w \lambda- \mathrm{wt}(b) } \\ &=& \left ( \sum_{w \in {W}} (-1)^{\ell(w)} {\mathbf z}^{w\lambda} \right ) \left ( \sum_{b \in \mathbf B}  (1-q^{-1})^{d(\phi (b) ) } \mathbf z^{- \mathrm{wt}(b) } \right ), \end{eqnarray*} and we write
 \[ \chi_{q} (V(\lambda)) = \sum_{\mu \in Q_+} H_{\lambda }(\mu; q) \ \mathbf z^{\lambda - \mu}. \] \end{Def}

We denote by $\chi(V(\lambda) )$ the usual character of $V(\lambda)$. We have the element $d \in \frak h$ such that $\alpha_0 (d) = 1$ and $\alpha_j(d)=0$, $j \in I \setminus \{ 0 \}$. We define $\rho \in \frak h^*$ as in \cite[chapter 6]{Kac} by $\rho (h_j)=1$, $j \in I$ and $\rho (d)=0$. By the Weyl-Kac character formula,
\begin{equation*}
\frac {\displaystyle\sum_{w \in {W} } (-1)^{\ell (w) } \mathbf z^{w (\lambda+ \rho)  - \rho} }{ \displaystyle\prod_{\alpha \in \Delta^+} ( 1 - \mathbf z^{-\alpha} )^{\mathrm{mult}\, \alpha} } = \chi (V(\lambda) ).
\end{equation*}

In particular, if $\lambda=0$, then
$$ 
\sum_{w \in {W}} (-1)^{\ell (w)} {\mathbf z}^{w\rho}
={\mathbf z}^{\rho}\prod_{\alpha \in \Delta^+} ( 1 - {\mathbf z}^{-\alpha})^{\mathrm{mult}\, \alpha}.
$$
By Theorem \ref{thm-first}, 
$$\sum_{b \in \mathbf B}  (1-q^{-1})^{d(\phi (b) ) } \mathbf z^{- \mathrm{wt}(b) }=\prod_{\alpha \in \Delta^+} \left ( \frac {1- q^{-1} \mathbf z^{-\alpha} }{1 - \mathbf z^{-\alpha} } \right )^{\mathrm{mult} \, \alpha}.
$$

Thus we obtain
\begin{eqnarray*}\chi_q(V(\rho)) &=&\left ( \sum_{w \in {W}} (-1)^{\ell(w)} {\mathbf z}^{w\rho} \right ) \left ( \sum_{b \in \mathbf B}  (1-q^{-1})^{d(\phi (b) ) } \mathbf z^{- \mathrm{wt}(b) } \right ) \\ &=& {\mathbf z}^{\rho}\prod_{\alpha \in \Delta^+} ( 1 - {\mathbf z}^{-\alpha})^{\mathrm{mult}\, \alpha} \prod_{\alpha \in \Delta^+} \left ( \frac {1- q^{-1} \mathbf z^{-\alpha} }{1 - \mathbf z^{-\alpha} } \right )^{\mathrm{mult} \, \alpha} \\ &=& \mathbf z^{\rho} \prod_{\alpha \in \Delta^+} (1 - q^{-1} \mathbf z^{- \alpha} )^{\mathrm{mult}\, \alpha}  .
\end{eqnarray*}

Therefore we have proved the following. 

\begin{equation} \label{eqn-important} \chi_{q} (V(\rho) ) = \mathbf z^{\rho} \prod_{\alpha \in \Delta^+} (1 - q^{-1} \mathbf z^{- \alpha} )^{\mathrm{mult}\, \alpha} . \end{equation} 

When $q=-1$ in (\ref{eqn-important}), we have  the following identity by \cite[Exercise 10.1]{Kac}.
\begin{Lem} \label{lem-ord} \[ \chi_{-1}(V(\rho))=\mathbf z^{\rho} \prod_{\alpha \in \Delta^+} (1+ \mathbf z^{- \alpha} )^{\mathrm{mult}\, \alpha} =\chi(V(\rho)).\]
\end{Lem}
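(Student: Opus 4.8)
The plan is to combine the $q$-deformed Casselman--Shalika identity (\ref{eqn-important}) with the classical observation (\cite[Exercise 10.1]{Kac}) that specializing the Weyl--Kac denominator product at $q=-1$ collapses the ``imaginary'' contributions. Concretely, I would start from (\ref{eqn-important}) and set $q=-1$, so that $q^{-1}=-1$ and the right-hand side becomes
\[
\mathbf z^{\rho} \prod_{\alpha \in \Delta^+} (1 + \mathbf z^{-\alpha})^{\mathrm{mult}\,\alpha}.
\]
This gives the first equality in the lemma directly, provided one checks that the generating series defining $\chi_q(V(\rho))$ actually admits the specialization $q=-1$; since $H_\rho(\mu;q) \in \mathbb Z[q^{-1}]$ is a genuine polynomial in $q^{-1}$ (by the finiteness built into Definition \ref{def-a} and Theorem \ref{thm-first}), evaluating at $q^{-1}=-1$ is legitimate term by term.

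The second equality, $\mathbf z^{\rho}\prod_{\alpha\in\Delta^+}(1+\mathbf z^{-\alpha})^{\mathrm{mult}\,\alpha} = \chi(V(\rho))$, is the content of \cite[Exercise 10.1]{Kac}. I would recall its proof idea for completeness: by the Weyl--Kac character formula applied to $\lambda=\rho$ together with the ``strange formula''/denominator identity, one has $\chi(V(\rho)) = \mathbf z^\rho \prod_{\alpha\in\Delta^+}(1+\mathbf z^{-\alpha})^{\mathrm{mult}\,\alpha}$; the point is that the numerator $\sum_{w}(-1)^{\ell(w)}\mathbf z^{w(2\rho)-\rho}$ factors, using $2\rho = \rho+\rho$ and the denominator identity, so that the imaginary roots contribute a factor which combines with the denominator to leave only the $(1+\mathbf z^{-\alpha})$ product. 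Alternatively, one notes $V(\rho)$ has character equal to $\prod_{\alpha\in\Delta^+}(1+\mathbf z^{-\alpha})^{\mathrm{mult}\,\alpha}\cdot \mathbf z^\rho$ because $\rho$ is minuscule-like in the relevant sense for this identity; in any case this is a cited classical fact and needs no new argument here.

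The only genuine point requiring care — and what I would flag as the main obstacle — is the interchange of the infinite product/sum with the specialization $q=-1$: the left side $\chi_q(V(\rho))$ is defined as a formal series $\sum_\mu H_\rho(\mu;q)\mathbf z^{\rho-\mu}$, and one must know that for each fixed weight $\rho-\mu$ the coefficient $H_\rho(\mu;q)$ is a polynomial in $q^{-1}$ so that $H_\rho(\mu;-1)$ makes sense, and that the resulting series equals the $q=-1$ specialization of the product side coefficient-by-coefficient. This follows because, in each fixed degree, only finitely many canonical basis elements $b$ contribute to the right-hand side of (\ref{eqn-main-2}) (weights of $\mathbf B$ lie in $Q_+$ and are graded with finite-dimensional graded pieces), so all identities in this section are honest identities of formal power series with polynomial coefficients in $q^{-1}$, and substitution $q^{-1}=-1$ is valid. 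Once this formal bookkeeping is in place, the lemma is immediate: chain the specialized (\ref{eqn-important}) with \cite[Exercise 10.1]{Kac}. I expect the write-up to be short, essentially a two-line deduction plus a pointer to the Kac exercise.
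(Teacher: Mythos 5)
Your proof is correct and follows exactly the paper's route: the paper likewise obtains the first equality by setting $q=-1$ in (\ref{eqn-important}) and cites \cite[Exercise 10.1]{Kac} for the second. The extra care you take about specializing the formal series at $q^{-1}=-1$ is sound but left implicit in the paper.
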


\begin{Rmk} \label{rho} 
By Definition \ref{def-a}, 
$$\chi_{-1}(V(\rho))=\sum_{\mu\in Q_+} H_{\rho}(\mu; -1) {\mathbf z}^{\rho-\mu}=\mathbf z^{\rho} \prod_{\alpha \in \Delta^+} (1+ \mathbf z^{- \alpha} )^{\mathrm{mult}\, \alpha}.
$$
Therefore, if $H_\rho(\mu; -1)\ne 0$, $\rho -\mu$ must be a weight of $V(\rho)$ and $H_\rho(\mu; -1)$ is the multiplicity of $\rho-\mu$ in $V(\rho)$. 
\end{Rmk}

Now we have the following proposition which is an affine analogue of the Casselman-Shalika formula.

\begin{Prop} \label{prop-aa}
\begin{equation} \label{eqn-prod} \chi_q (V({\lambda + \rho})) = \chi (V(\lambda)) \chi_q (V(\rho)) . \end{equation}
\end{Prop}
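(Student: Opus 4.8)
The plan is to unwind Definition \ref{def-a} for the weight $\lambda+\rho$ (note that $\lambda+\rho\in P_+$ whenever $\lambda\in P_+$) and then substitute the two results already in hand: Theorem \ref{thm-first} and the Weyl--Kac character formula. Using the factored form of the generating series in Definition \ref{def-a}, I would write
\[ \chi_q(V(\lambda+\rho)) = \left(\sum_{w\in W}(-1)^{\ell(w)}\mathbf z^{w(\lambda+\rho)}\right)\left(\sum_{b\in\mathbf B}(1-q^{-1})^{d(\phi(b))}\mathbf z^{-\mathrm{wt}(b)}\right), \]
and evaluate the two factors separately.

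For the second factor I would apply Theorem \ref{thm-first} with $\mathbf z$ replaced by $\mathbf z^{-1}$ --- the same substitution already used in the derivation of (\ref{eqn-important}) --- to get
\[ \sum_{b\in\mathbf B}(1-q^{-1})^{d(\phi(b))}\mathbf z^{-\mathrm{wt}(b)} = \prod_{\alpha\in\Delta^+}\left(\frac{1-q^{-1}\mathbf z^{-\alpha}}{1-\mathbf z^{-\alpha}}\right)^{\mathrm{mult}\,\alpha}. \]
For the first factor, the Weyl--Kac character formula applied to $V(\lambda)$ gives $\sum_{w\in W}(-1)^{\ell(w)}\mathbf z^{w(\lambda+\rho)-\rho} = \chi(V(\lambda))\prod_{\alpha\in\Delta^+}(1-\mathbf z^{-\alpha})^{\mathrm{mult}\,\alpha}$, hence
\[ \sum_{w\in W}(-1)^{\ell(w)}\mathbf z^{w(\lambda+\rho)} = \mathbf z^{\rho}\,\chi(V(\lambda))\prod_{\alpha\in\Delta^+}(1-\mathbf z^{-\alpha})^{\mathrm{mult}\,\alpha}. \]

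Multiplying the two evaluated factors, the product $\prod_{\alpha\in\Delta^+}(1-\mathbf z^{-\alpha})^{\mathrm{mult}\,\alpha}$ cancels the denominators, leaving
\[ \chi_q(V(\lambda+\rho)) = \chi(V(\lambda))\;\mathbf z^{\rho}\prod_{\alpha\in\Delta^+}(1-q^{-1}\mathbf z^{-\alpha})^{\mathrm{mult}\,\alpha}, \]
and by (\ref{eqn-important}) the factor $\mathbf z^{\rho}\prod_{\alpha\in\Delta^+}(1-q^{-1}\mathbf z^{-\alpha})^{\mathrm{mult}\,\alpha}$ is exactly $\chi_q(V(\rho))$, which is (\ref{eqn-prod}). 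I do not expect any genuine obstacle here; the only points deserving attention are that all identities are read in a suitable completion of the group algebra of the weight lattice (so that the infinite products and the sum over the canonical basis $\mathbf B$ make sense, exactly as in the earlier statements), and that one keeps straight which weight enters the alternating Weyl sum --- it is $w(\lambda+\rho)$, so the factor $\mathbf z^{\rho}$ and the Weyl--Kac denominator show up in precisely the places needed for the cancellation.
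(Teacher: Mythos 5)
Your proof is correct and is essentially identical to the paper's own argument: both rewrite $\chi_q(V(\lambda+\rho))$ via Definition \ref{def-a} and Theorem \ref{thm-first} as the alternating Weyl sum times $\prod_{\alpha\in\Delta^+}\bigl(\frac{1-q^{-1}\mathbf z^{-\alpha}}{1-\mathbf z^{-\alpha}}\bigr)^{\mathrm{mult}\,\alpha}$, then invoke the Weyl--Kac character formula and (\ref{eqn-important}) to identify the result as $\chi(V(\lambda))\chi_q(V(\rho))$. You simply spell out the cancellation of the denominator that the paper leaves implicit.
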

\begin{proof} By Definition \ref{def-a} and Theorem \ref{thm-first},
$$\chi_q(V(\lambda+\rho))=
\left ( \sum_{w \in {W}} (-1)^{\ell(w)} {\mathbf z}^{w(\lambda+\rho)} \right ) \prod_{\alpha \in \Delta^+} \left ( \frac {1- q^{-1} \mathbf z^{-\alpha} }{1 - \mathbf z^{-\alpha} } \right )^{\mathrm{mult} \, \alpha}.
$$
By the Weyl-Kac character formula and (\ref{eqn-important}), the right hand side is $\chi(V(\lambda))\chi_q(V(\rho))$.
\end{proof}

\begin{Rmk} When $q=1$, we see that $\chi_1(V(\lambda+\rho))\mathbf z^{-\rho}$ is the numerator of the Weyl-Kac character formula. Hence we can think of (\ref{eqn-prod}) as a $q$-deformation of Weyl-Kac character formula. 
\end{Rmk}

At special values of $q$, the formal sum $\chi_q(V(\lambda+\rho))$ becomes characters of some representations as you can see in the following corollary.

\begin{Cor} \label{cor-tensor} \hfill
\begin{enumerate}
\item When $q=\infty$, we have $$\chi_{\infty}(V(\lambda+\rho))={\mathbf z}^{\rho}\chi(V(\lambda)).
$$
Hence we may consider $\chi_q(V(\lambda+\rho))\mathbf z^{-\rho}$ as a $q$-deformation of $\chi (V(\lambda))$. 

\item When $q=-1$, we obtain
\begin{eqnarray*} \chi_{-1}(V(\lambda+\rho)) = \chi(V(\lambda )) \chi (V(\rho))=\chi(V(\lambda)\otimes V(\rho)).
\end{eqnarray*}
\end{enumerate}
\end{Cor}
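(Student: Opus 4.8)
The plan is to read both statements off Proposition~\ref{prop-aa}, namely $\chi_q(V(\lambda+\rho)) = \chi(V(\lambda))\,\chi_q(V(\rho))$, by specializing the parameter $q$ inside the single factor $\chi_q(V(\rho))$. By Definition~\ref{def-a} the formal sum $\chi_q(V(\lambda+\rho))$ has coefficients $H_{\lambda+\rho}(\mu;q) \in \mathbb Z[q^{-1}]$, so the substitutions $q=\infty$ (meaning $q^{-1}=0$) and $q=-1$ (meaning $q^{-1}=-1$) make sense coefficientwise, and the identity of Proposition~\ref{prop-aa} survives each of these substitutions. Hence everything reduces to computing $\chi_\infty(V(\rho))$ and $\chi_{-1}(V(\rho))$.

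For part (1), I would start from (\ref{eqn-important}), which gives $\chi_q(V(\rho)) = \mathbf z^{\rho}\prod_{\alpha \in \Delta^+}(1 - q^{-1}\mathbf z^{-\alpha})^{\mathrm{mult}\,\alpha}$. After dividing by $\mathbf z^{\rho}$, each coefficient of this expression is a polynomial in $q^{-1}$: the coefficient of $\mathbf z^0$ equals $1$, because every exponent $-\alpha$ occurring in the product lies in $-Q_+\setminus\{0\}$, while every other coefficient is a sum of monomials in the $-q^{-1}$'s and hence is divisible by $q^{-1}$. Setting $q^{-1}=0$ thus collapses the product to $1$, so $\chi_\infty(V(\rho)) = \mathbf z^{\rho}$, and Proposition~\ref{prop-aa} gives $\chi_\infty(V(\lambda+\rho)) = \mathbf z^{\rho}\chi(V(\lambda))$, whence the displayed reformulation as a $q$-deformation of $\chi(V(\lambda))$.

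For part (2), Lemma~\ref{lem-ord} already records $\chi_{-1}(V(\rho)) = \mathbf z^{\rho}\prod_{\alpha\in\Delta^+}(1+\mathbf z^{-\alpha})^{\mathrm{mult}\,\alpha} = \chi(V(\rho))$. Substituting $q=-1$ into Proposition~\ref{prop-aa} then yields $\chi_{-1}(V(\lambda+\rho)) = \chi(V(\lambda))\,\chi(V(\rho))$, and the final equality with $\chi(V(\lambda)\otimes V(\rho))$ is the standard multiplicativity of characters under tensor product of $\frak g$-modules. The whole argument is essentially bookkeeping; the one point to watch is the passage $q\to\infty$, i.e.\ confirming that the infinite product in (\ref{eqn-important}) specializes termwise — that for each fixed weight only finitely many factors contribute and that the resulting coefficient is genuinely a polynomial in $q^{-1}$ with the constant term claimed above.
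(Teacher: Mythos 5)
Your argument is correct and is essentially the paper's own proof: both parts are read off Proposition \ref{prop-aa} by specializing $q$, using $\chi_\infty(V(\rho))=\mathbf z^\rho$ from (\ref{eqn-important}) for part (1) and Lemma \ref{lem-ord} together with multiplicativity of characters for part (2). Your extra care about the termwise specialization of the infinite product is a sound (if unstated in the paper) addition, not a deviation.
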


\begin{proof}
(1) The identity is true since $\chi_{\infty}(V(\rho))={\mathbf z}^{\rho}$.
(2) By putting $q=-1$ in (\ref{eqn-prod}), the identity follows from Lemma \ref{lem-ord}.
\end{proof}

We also obtain representation-theoretic meaning of special values of $H_{\lambda +\rho}(\mu;q)$.

\begin{Cor} \label{cor-HHH} \hfill
\begin{enumerate} 
\item The value $H_{\lambda+\rho}(\mu; \infty)$ is the multiplicity of the weight $\lambda-\mu$ in $V(\lambda)$.
\item The value $H_{\lambda+\rho}(\mu; -1)$ is the multiplicity of the weight $\lambda+\rho-\mu$ in the tensor product $V(\lambda)\otimes V(\rho)$.
\end{enumerate}
\end{Cor}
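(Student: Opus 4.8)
The plan is to read off both statements directly by matching coefficients of $\mathbf z$-monomials in the two expressions for $\chi_q(V(\lambda+\rho))$: the defining generating series of Definition~\ref{def-a} on one side, and the product formula of Proposition~\ref{prop-aa} specialized as in Corollary~\ref{cor-tensor} on the other. Before doing this I would record a preliminary observation establishing that $H_{\lambda+\rho}(\mu;q)$ is a genuine element of $\mathbb Z[q^{-1}]$, so that the evaluations at $q=\infty$ (i.e.\ $q^{-1}=0$) and $q=-1$ are meaningful. Since $\lambda\in P_+$ forces $\lambda+\rho$ to be strictly dominant, for each fixed $\mu\in Q_+$ only finitely many pairs $(w,b)\in W\times\mathbf B$ contribute to the coefficient of $\mathbf z^{(\lambda+\rho)-\mu}$ in the series of Definition~\ref{def-a}: one needs $\mathrm{wt}(b)=w(\lambda+\rho)-(\lambda+\rho)+\mu$, and writing $(\lambda+\rho)-w(\lambda+\rho)=\gamma\in Q_+$ this becomes $\mathrm{wt}(b)=\mu-\gamma\in Q_+$, so $\gamma$ lies in the finite set $\{\gamma\in Q_+:\gamma\le\mu\}$, only finitely many $w$ satisfy $w(\lambda+\rho)\ge(\lambda+\rho)-\mu$ (a standard Kac--Moody finiteness fact for regular dominant weights), and for each such $w$ the relevant weight space of $\mathbf U^+$ is finite-dimensional. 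Hence the coefficient is a finite $\mathbb Z$-combination of powers of $(1-q^{-1})$, so lies in $\mathbb Z[q^{-1}]$, and all substitutions and coefficient extractions below commute legitimately.

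For part (1): by Corollary~\ref{cor-tensor}(1), $\chi_{\infty}(V(\lambda+\rho))=\mathbf z^{\rho}\chi(V(\lambda))=\sum_{\nu}\dim V(\lambda)_{\nu}\,\mathbf z^{\rho+\nu}$, while setting $q=\infty$ in Definition~\ref{def-a} gives $\chi_{\infty}(V(\lambda+\rho))=\sum_{\mu\in Q_+}H_{\lambda+\rho}(\mu;\infty)\,\mathbf z^{(\lambda+\rho)-\mu}$. Equating the coefficients of $\mathbf z^{(\lambda+\rho)-\mu}$, that is, taking $\nu=\lambda-\mu$ on the right, yields $H_{\lambda+\rho}(\mu;\infty)=\dim V(\lambda)_{\lambda-\mu}$, the multiplicity of the weight $\lambda-\mu$ in $V(\lambda)$.

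For part (2): by Corollary~\ref{cor-tensor}(2), $\chi_{-1}(V(\lambda+\rho))=\chi(V(\lambda)\otimes V(\rho))=\sum_{\nu}\dim(V(\lambda)\otimes V(\rho))_{\nu}\,\mathbf z^{\nu}$, whereas Definition~\ref{def-a} at $q=-1$ gives $\chi_{-1}(V(\lambda+\rho))=\sum_{\mu\in Q_+}H_{\lambda+\rho}(\mu;-1)\,\mathbf z^{(\lambda+\rho)-\mu}$. Matching the coefficient of $\mathbf z^{(\lambda+\rho)-\mu}$ (so $\nu=(\lambda+\rho)-\mu$) gives $H_{\lambda+\rho}(\mu;-1)=\dim(V(\lambda)\otimes V(\rho))_{(\lambda+\rho)-\mu}$, as asserted.

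The only point requiring genuine care — and where I would concentrate the argument — is the finiteness/well-definedness issue isolated in the first paragraph, namely that extracting a fixed $\mathbf z$-coefficient from the (a priori infinite, Weyl-group-indexed) generating series produces a polynomial in $q^{-1}$, so that specialization and coefficient comparison are valid and mutually compatible. Once this is secured, the rest is pure bookkeeping of the $\rho$-shift coming from $\chi_{\infty}(V(\rho))=\mathbf z^{\rho}$ versus $\chi_{-1}(V(\rho))=\chi(V(\rho))$ (Lemma~\ref{lem-ord}), and there is no further obstacle.
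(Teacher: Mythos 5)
Your proposal is correct and follows essentially the same route as the paper: both parts are read off from Corollary \ref{cor-tensor} by equating coefficients of $\mathbf z^{\lambda+\rho-\mu}$ in the two expressions for $\chi_q(V(\lambda+\rho))$. The preliminary finiteness observation ensuring $H_{\lambda+\rho}(\mu;q)\in\mathbb Z[q^{-1}]$ is a reasonable extra precaution that the paper takes for granted, but it does not change the argument.
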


\begin{proof}
(1) By Definition \ref{def-a} and from Corollary \ref{cor-tensor} (1), we have
$$
\sum_{\mu\in Q_+} H_{\lambda+\rho}(\mu; \infty) {\mathbf z}^{\lambda-\mu}= \mathbf z^{-\rho} \chi_{\infty}(V(\lambda+\rho))= \chi(V(\lambda)).
$$
This proves the part (1).

(2) We obtain from Corollary \ref{cor-tensor} (2)
\[  \chi_{-1}(V(\lambda+\rho)) = \sum_{\mu\in Q_+} H_{\lambda+\rho}(\mu; -1) {\mathbf z}^{\lambda+\rho-\mu} =\chi(V(\lambda)\otimes V(\rho)) ,\] which proves the part (2).
\end{proof}


Before we further investigate the implication of the Casselman-Shalika formula (\ref{eqn-prod}), we need the following lemma.

\begin{Lem} \label{lem-weights} 
Assume that $\lambda_1, \lambda_2 \in P_+$. Then the set of weights of $V(\lambda_1) \otimes V(\lambda_2)$ is the same as that of 
$V(\lambda_1 + \lambda_2)$.
\end{Lem}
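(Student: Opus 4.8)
The plan is to prove both inclusions of weight sets, the easy one first. For the inclusion $\mathrm{wt}(V(\lambda_1)\otimes V(\lambda_2))\subseteq \mathrm{wt}(V(\lambda_1+\lambda_2))$, I would argue as follows. Every weight of $V(\lambda_1)\otimes V(\lambda_2)$ has the form $\mu_1+\mu_2$ where $\mu_i$ is a weight of $V(\lambda_i)$. Write $\mu_i = w_i(\nu_i)$ for some $\nu_i \in P_+$ with $\nu_i \le \lambda_i$ in the dominance order (every weight of $V(\lambda_i)$ is Weyl-conjugate to a dominant weight below $\lambda_i$; see \cite[Prop.~11.2]{Kac}). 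It then suffices to show $\nu_1+\nu_2$ is a weight of $V(\lambda_1+\lambda_2)$, since weights of an integrable module are stable under $W$ and a dominant weight below $\lambda_1+\lambda_2$ is automatically a weight of $V(\lambda_1+\lambda_2)$. The bound $\nu_1+\nu_2 \le \lambda_1+\lambda_2$ is clear, and $\nu_1+\nu_2 \in P_+$, so this inclusion follows once we know $\{\,\text{dominant }\mu \le \Lambda\,\} = \mathrm{wt}(V(\Lambda))\cap P_+$, which is \cite[Prop.~11.3(a)]{Kac} (or its Exercise~11.x counterpart).

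For the reverse inclusion $\mathrm{wt}(V(\lambda_1+\lambda_2))\subseteq \mathrm{wt}(V(\lambda_1)\otimes V(\lambda_2))$, the cleanest route is to use a highest-weight vector argument: $V(\lambda_1+\lambda_2)$ occurs as a submodule (indeed the ``Cartan component'') of $V(\lambda_1)\otimes V(\lambda_2)$, generated by $v_{\lambda_1}\otimes v_{\lambda_2}$ where $v_{\lambda_i}$ is a highest-weight vector of $V(\lambda_i)$. Concretely, the submodule $U(\mathfrak{g})\cdot(v_{\lambda_1}\otimes v_{\lambda_2})$ is a highest-weight module of highest weight $\lambda_1+\lambda_2$, hence (being integrable, as a submodule of an integrable module) isomorphic to $V(\lambda_1+\lambda_2)$. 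Therefore every weight of $V(\lambda_1+\lambda_2)$ is a weight of $V(\lambda_1)\otimes V(\lambda_2)$. Combining the two inclusions gives the equality of weight sets.

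The main obstacle is making the reverse inclusion fully rigorous in the Kac–Moody setting: one must check that $U(\mathfrak{g})\cdot(v_{\lambda_1}\otimes v_{\lambda_2})$ is a genuine highest-weight module and, crucially, that it is irreducible (equivalently, that the unique maximal submodule of the Verma-type cover is killed) — this uses integrability, i.e. that a submodule of an integrable highest-weight module generated by a highest-weight vector is again irreducible integrable, which follows from complete reducibility of integrable modules in category $\mathcal{O}$ \cite[Ch.~10]{Kac}. An alternative that sidesteps this is purely combinatorial: show directly that if $\mu$ is dominant with $\mu \le \lambda_1+\lambda_2$ then $\mu = \nu_1+\nu_2$ with $\nu_i \in P_+$, $\nu_i \le \lambda_i$, by a greedy/induction argument on $\mathrm{ht}(\lambda_1+\lambda_2-\mu)$, peeling off simple roots while maintaining dominance of both pieces; but verifying one can always choose which summand to subtract $\alpha_i$ from without violating dominance is itself slightly delicate. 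I would present the highest-weight-vector argument as the primary proof and mention the combinatorial description as the underlying reason the first inclusion works.
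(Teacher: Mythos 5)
Your reverse inclusion (the Cartan-component argument showing that $V(\lambda_1+\lambda_2)$ embeds in $V(\lambda_1)\otimes V(\lambda_2)$, so every weight of the former is a weight of the latter) is correct and is exactly what the paper does, citing \cite[p.~211]{Kac}. The forward inclusion, however, has a genuine gap in the reduction step. You write $\mu_i=w_i(\nu_i)$ with $\nu_i\in P_+$ and claim it suffices to show that $\nu_1+\nu_2$ is a weight of $V(\lambda_1+\lambda_2)$ because weight sets are $W$-stable. But $w_1$ and $w_2$ are in general different, so $\mu_1+\mu_2=w_1\nu_1+w_2\nu_2$ need not lie in the $W$-orbit of $\nu_1+\nu_2$, and $W$-stability of the weight set of $V(\lambda_1+\lambda_2)$ then tells you nothing about $\mu_1+\mu_2$. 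Already for $\mathfrak{sl}_2$ with $\lambda_1=\lambda_2=\omega$, the weight $\omega+(-\omega)=0$ of the tensor product has $\nu_1+\nu_2=2\omega$, which is not $W$-conjugate to $0$; your argument establishes that $2\omega$ is a weight of $V(2\omega)$ but says nothing about $0$. The fix is the one the paper uses: apply a \emph{single} $w\in W$ making $\mu_1+\mu_2$ dominant (Corollary 10.1 of \cite{Kac}), and observe that $w(\mu_1+\mu_2)=w\mu_1+w\mu_2$ with each $w\mu_i$ still a weight of $V(\lambda_i)$.

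A second, smaller problem: your claim that a dominant weight $\mu\le\Lambda$ is automatically a weight of $V(\Lambda)$ is a finite-type fact and fails for affine Kac--Moody algebras (for instance $-\delta$ is dominant and $\le 0$ but is not a weight of $V(0)$). The correct criterion is Proposition 11.2 of \cite{Kac}: $\mu$ must be \emph{nondegenerate} with respect to $\Lambda$, i.e.\ every connected component $S$ of $\mathrm{supp}(\Lambda-\mu)$ must contain an index $i$ with $\langle \Lambda, h_i\rangle\neq 0$. The paper's proof spends its last two sentences verifying exactly this: $w\mu_1$ and $w\mu_2$ are nondegenerate with respect to $\lambda_1$ and $\lambda_2$ by Lemma 11.2 of \cite{Kac}, and nondegeneracy is inherited by the sum, so $w\mu_1+w\mu_2$ is nondegenerate with respect to $\lambda_1+\lambda_2$. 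Since the paper works with affine $\mathfrak g$ throughout, this step cannot be omitted; once you replace your two Weyl elements by a single one and add the nondegeneracy check, your proof coincides with the paper's.
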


\begin{proof}
Suppose that $\lambda_1,\lambda_2\in P_+$. Let $V(\lambda_1)$ and $V(\lambda_2)$ be the integrable highest weight modules with highest weights $\lambda_1$ and $\lambda_2$, respectively.
By \cite[p. 211]{Kac}, $V(\lambda_1+\lambda_2)$ occurs in $V(\lambda_1)\otimes V(\lambda_2)$ with multiplicity one. Hence it is enough to prove that
any weight of $V(\lambda_1)\otimes V(\lambda_2)$ is a weight of $V(\lambda_1+\lambda_2)$.

If $V_1$ and $V_2$ are modules in the category $\mathcal{O}$, then the weight space of $(V_1\otimes V_2)_{\mu}$ for $\mu\in \frak h^*$, 
is given by
$$(V_1\otimes V_2)_{\mu}=\sum_{\nu\in \frak h^*} (V_1)_{\nu}\otimes (V_2)_{\mu-\nu}.
$$
Hence weights of $V(\lambda_1)\otimes V(\lambda_2)$ are of the form $\mu_1+\mu_2$, where $\mu_1$ and $\mu_2$ are weights of $V(\lambda_1)$ and $V(\lambda_2)$, respectively. Furthermore, since  $V(\lambda_1)\otimes V(\lambda_2)$ is completely reducible, a weight $\mu_1+\mu_2$ of $V(\lambda_1)\otimes V(\lambda_2)$ is a weight of the module $V(\lambda)$ for some $\lambda \in P_+$, that appears in the decomposition of $V(\lambda_1)\otimes V(\lambda_2)$.

It follows from Corollary 10.1 in \cite{Kac} that we can choose $w\in W$ such that $w(\mu_1+\mu_2)\in P_+$. Then, by Proposition 11.2 in \cite{Kac}, we need only to show that $w(\mu_1+\mu_2)$ is nondegenerate with respect to $\lambda_1+\lambda_2$. 
By Lemma 11.2 in \cite{Kac}, $w\mu_1$ and $w\mu_2$ are nondegenerate with respect to $\lambda_1$ and $\lambda_2$, respectively. Now, from the definition of nondegeneracy \cite[p.190]{Kac}, we see that
$w\mu_1+w\mu_2$ is nondegenerate with respect to $\lambda_1+\lambda_2$. 
\end{proof}

Now we use crystal bases, namely, bases at $v=0$, since they behave nicely under tensor products.
Let $\frak B_\lambda$ be the crystal basis associated to a dominant integral weight $\lambda \in P_+$.
We  choose $G_{\rho}(\cdot; q): \frak B_\rho \rightarrow  \mathbb Z [q^{-1}]$  by assigning any element of $\mathbb Z [q^{-1}]$ to each $b \in \frak B_\rho$ so that 
\begin{equation} \label{eqn-hh} H_\rho (\mu; q) = \sum_{b \in \frak B_\rho \atop \mathrm{wt}(b) = \rho - \mu} G_\rho(b; q) .\end{equation} 
By Remark \ref{rho}, it is enough to consider $\mu \in Q_+$
such that $\rho -\mu$ is a weight of $b \in \frak B_{\rho}$. 

Using the function $G_{\rho}(\cdot; q)$, we can rewrite Casselman-Shalika formula in Proposition \ref{prop-aa} in a familiar form:
\begin{Cor} \label{cor-familiar}
\begin{equation} \label{eqn-tensor} 
\sum_{\mu \in Q_+} H_{\lambda + \rho}(\mu; q) \, \mathbf z^{\lambda + \rho - \mu} =
\chi (V(\lambda)) \mathbf z^{\rho} \prod_{\alpha \in \Delta^+} (1 - q^{-1} \mathbf z^{- \alpha} )^{\mathrm{mult}\, \alpha}
=  \sum_{ b' \otimes b \in \frak B_{\lambda} \otimes \frak B_{\rho} } G_{\rho} (b; q) \, \mathbf z^{\mathrm{wt}(b' \otimes b)}.
\end{equation} 
\end{Cor}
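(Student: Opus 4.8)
The strategy is to deduce both equalities from identities already established, treating them one at a time.

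\emph{First equality.} By Definition \ref{def-a} the left-hand side is exactly $\chi_q(V(\lambda+\rho))$. Proposition \ref{prop-aa} rewrites this as $\chi(V(\lambda))\,\chi_q(V(\rho))$, and substituting the closed form (\ref{eqn-important}) for $\chi_q(V(\rho))$ yields $\chi(V(\lambda))\,\mathbf z^{\rho}\prod_{\alpha\in\Delta^+}(1-q^{-1}\mathbf z^{-\alpha})^{\mathrm{mult}\,\alpha}$. Thus the first equality is immediate; no new ingredient enters.

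\emph{Second equality.} I would expand each of the two factors of $\chi(V(\lambda))\,\chi_q(V(\rho))$ over a crystal basis. On the one hand, by the standard character formula for integrable highest-weight modules over a symmetrizable Kac--Moody algebra, $\chi(V(\lambda)) = \sum_{b'\in\frak B_\lambda}\mathbf z^{\mathrm{wt}(b')}$. On the other hand, I would use the defining relation (\ref{eqn-hh}) of $G_\rho(\cdot;q)$ to regroup the sum over $Q_+$ into a sum over $\frak B_\rho$, since the coefficient $H_\rho(\mu;q)$ is by construction distributed over the fibre of weight $\rho-\mu$:
\[ \chi_q(V(\rho)) \;=\; \sum_{\mu\in Q_+} H_\rho(\mu;q)\,\mathbf z^{\rho-\mu} \;=\; \sum_{b\in\frak B_\rho} G_\rho(b;q)\,\mathbf z^{\mathrm{wt}(b)} . \]
Multiplying the two expansions, and recalling that the underlying set of the tensor-product crystal $\frak B_\lambda\otimes\frak B_\rho$ is the Cartesian product $\frak B_\lambda\times\frak B_\rho$ with $\mathrm{wt}(b'\otimes b)=\mathrm{wt}(b')+\mathrm{wt}(b)$, I obtain
\[ \chi(V(\lambda))\,\chi_q(V(\rho)) \;=\; \sum_{b'\in\frak B_\lambda}\ \sum_{b\in\frak B_\rho} G_\rho(b;q)\,\mathbf z^{\mathrm{wt}(b')+\mathrm{wt}(b)} \;=\; \sum_{b'\otimes b\in\frak B_\lambda\otimes\frak B_\rho} G_\rho(b;q)\,\mathbf z^{\mathrm{wt}(b'\otimes b)} , \]
which is the right-hand side of the asserted identity.

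\emph{Where to be careful.} The only step that is not purely formal is the regrouping of $\sum_{\mu} H_\rho(\mu;q)\mathbf z^{\rho-\mu}$ into $\sum_{b\in\frak B_\rho} G_\rho(b;q)\mathbf z^{\mathrm{wt}(b)}$: it requires that the fibre $\{b\in\frak B_\rho : \mathrm{wt}(b)=\rho-\mu\}$ be nonempty whenever $H_\rho(\mu;q)\neq 0$, i.e.\ that the support of $\mu\mapsto H_\rho(\mu;q)$ consist of genuine weights of $V(\rho)$ — which is exactly what makes the choice of $G_\rho(\cdot;q)$ in (\ref{eqn-hh}) legitimate. This is the observation in Remark \ref{rho}, whose argument uses only that $\chi_q(V(\rho))=\mathbf z^{\rho}\prod_{\alpha\in\Delta^+}(1-q^{-1}\mathbf z^{-\alpha})^{\mathrm{mult}\,\alpha}$ and so applies for general $q$, not just $q=-1$; the matching fact on the tensor-product side — that the exponents $\lambda+\rho-\mu$ occurring on the left are weights of $V(\lambda+\rho)$ — is guaranteed by Lemma \ref{lem-weights}. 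Beyond these points everything is bookkeeping with crystal tensor products, and since only the weight grading of $\frak B_\lambda$ and $\frak B_\rho$ is used (not the Kashiwara operators), the affine case presents no extra difficulty over the finite one.
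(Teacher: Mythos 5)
Your proof is correct and follows essentially the same route as the paper: the first equality from Definition \ref{def-a}, Proposition \ref{prop-aa} and (\ref{eqn-important}), and the second by expanding $\chi(V(\lambda))$ over $\frak B_\lambda$ and regrouping $\chi_q(V(\rho))$ over $\frak B_\rho$ via (\ref{eqn-hh}) before multiplying. Your extra remark on why the regrouping is legitimate is exactly the role Remark \ref{rho} plays in the paper's setup of $G_\rho(\cdot;q)$.
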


\begin{proof}
The first equality is obvious from (\ref{eqn-important}) and Proposition \ref{prop-aa}. For the second equality, we obtain
\begin{eqnarray*} 
& & \chi (V(\lambda)) \mathbf z^{\rho} \prod_{\alpha \in \Delta^+} (1 - q^{-1} \mathbf z^{- \alpha} )^{\mathrm{mult} \, \alpha} = \chi (V(\lambda)) \chi_q( V(\rho)) \\ &=&  \left ( \sum_{b' \in \frak B_\lambda} \mathbf z^{\mathrm{wt}(b')} \right ) \left (   \sum_{\mu \in Q_+} H_{ \rho}( \mu; q) \, \mathbf z^{ \rho - \mu} \right ) =  \left ( \sum_{b' \in \frak B_\lambda} \mathbf z^{\mathrm{wt}(b')} \right ) \left (   \sum_{b \in \frak B_\rho} G_{ \rho}(b; q)  \mathbf z^{\mathrm{wt}(b)}  \right )\\ & =& \sum_{ b' \otimes b \in \frak B_{\lambda} \otimes \frak B_{\rho} } G_{\rho} (b; q) \, \mathbf z^{\mathrm{wt}(b' \otimes b)}.
\end{eqnarray*}
\end{proof}


The following proposition  provides useful information on $H_{\lambda +\rho} (\mu; q)\in \Bbb Z[q^{-1}]$. 

\begin{Prop} Assume that $\lambda \in P_+$. Then we have
$H_{\lambda +\rho} (\mu; q)$ is a nonzero polynomial if and only if $\lambda + \rho - \mu $ is a weight of $V(\lambda +\rho)$.
\end{Prop}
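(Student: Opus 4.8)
The plan is to analyze the generating series $\chi_q(V(\lambda+\rho)) = \sum_{\mu\in Q_+} H_{\lambda+\rho}(\mu;q)\,\mathbf z^{\lambda+\rho-\mu}$ and extract the support. From Proposition \ref{prop-aa} and (\ref{eqn-important}) we have
\[
\chi_q(V(\lambda+\rho)) = \chi(V(\lambda))\,\mathbf z^{\rho}\prod_{\alpha\in\Delta^+}(1-q^{-1}\mathbf z^{-\alpha})^{\mathrm{mult}\,\alpha},
\]
and by Proposition \ref{prop-collection}(4) the infinite product $\prod_{\alpha\in\Delta^+}(1-q^{-1}\mathbf z^{-\alpha})^{\mathrm{mult}\,\alpha}$ is precisely $\left(\sum_{b\in\mathbf B}q^{-|\phi(b)|}\mathbf z^{-\mathrm{wt}(b)}\right)^{-1}$; more to the point, when we invert it we get $\prod_{\alpha\in\Delta^+}(1-q^{-1}\mathbf z^{-\alpha})^{-\mathrm{mult}\,\alpha}$, whose coefficients are the $q$-Kostant numbers $q^{-|\phi(b)|}$, all of which are \emph{nonzero} for every $b\in\mathbf B$. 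So the first direction is the delicate one: I must show that multiplication by $\chi(V(\lambda))$ and by the finite-looking-but-infinite product $\prod(1-q^{-1}\mathbf z^{-\alpha})^{\mathrm{mult}\,\alpha}$ does not introduce cancellation that would kill the coefficient of a weight $\lambda+\rho-\mu$ of $V(\lambda+\rho)$.

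For the \emph{only if} direction (if $H_{\lambda+\rho}(\mu;q)\ne 0$ then $\lambda+\rho-\mu$ is a weight of $V(\lambda+\rho)$), I would specialize and use what is already proved. Setting $q=-1$ in Corollary \ref{cor-tensor}(2) gives $\chi_{-1}(V(\lambda+\rho))=\chi(V(\lambda)\otimes V(\rho))$, and by Lemma \ref{lem-weights} the weights of $V(\lambda)\otimes V(\rho)$ are exactly those of $V(\lambda+\rho)$. If $\lambda+\rho-\mu$ were \emph{not} a weight of $V(\lambda+\rho)$, then $H_{\lambda+\rho}(\mu;-1)=0$ by Corollary \ref{cor-HHH}(2); but that alone does not force the polynomial to vanish. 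The cleaner route is to observe from the displayed factorization that every term contributing to the coefficient of $\mathbf z^{\lambda+\rho-\mu}$ has the form (a weight of $V(\lambda)$, shifted by $\rho$, minus a nonnegative combination of positive roots), i.e. it lies in the convex hull / root-lattice cone description of $\mathrm{wt}(V(\lambda+\rho))$; since $V(\lambda+\rho)$ occurs in $V(\lambda)\otimes V(\rho)$ with multiplicity one and all other constituents have strictly smaller highest weight, the support of the product is contained in $\mathrm{wt}(V(\lambda+\rho))$. I would make this precise using that $\lambda+\rho$ is dominant and that a weight appearing in the product, being of the form $\mathrm{wt}(b')+\rho-\mathrm{wt}(b)$ with $b'\in\frak B_\lambda$, $b\in\mathbf B$, is $\le\lambda+\rho$ and $W$-conjugate into $\mathrm{wt}(V(\lambda+\rho))$ exactly when it is nondegenerate with respect to $\lambda+\rho$ — the nondegeneracy argument of Lemma \ref{lem-weights}.

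For the \emph{if} direction, suppose $\nu:=\lambda+\rho-\mu$ is a weight of $V(\lambda+\rho)$; I must show $H_{\lambda+\rho}(\mu;q)\not\equiv 0$ as an element of $\mathbb Z[q^{-1}]$. The trick is to evaluate at a convenient value of $q$ where positivity prevents cancellation. Take $q=\infty$: by Corollary \ref{cor-HHH}(1), $H_{\lambda+\rho}(\mu;\infty)$ is the multiplicity of $\lambda-\mu$ in $V(\lambda)$. This is nonzero precisely when $\lambda-\mu\in\mathrm{wt}(V(\lambda))$, which need not hold for every $\mu$ with $\lambda+\rho-\mu\in\mathrm{wt}(V(\lambda+\rho))$. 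So instead I would evaluate at $q=-1$: by Corollary \ref{cor-HHH}(2) and Lemma \ref{lem-weights}, $H_{\lambda+\rho}(\mu;-1)=\dim(V(\lambda)\otimes V(\rho))_{\lambda+\rho-\mu}$, which is $>0$ exactly because $\lambda+\rho-\mu$ is a weight of $V(\lambda+\rho)$ and hence of $V(\lambda)\otimes V(\rho)$. Therefore $H_{\lambda+\rho}(\mu;-1)\ne 0$, so $H_{\lambda+\rho}(\mu;q)$ is not the zero polynomial. This handles the converse cleanly, and in fact shows the two implications are genuinely equivalent via the $q=-1$ specialization alone.

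Putting it together: the \emph{if} direction is immediate from Corollary \ref{cor-HHH}(2) plus Lemma \ref{lem-weights} (nonvanishing at $q=-1$ implies nonvanishing as a polynomial). The \emph{only if} direction is the main obstacle — I expect to have to argue, as sketched above, that the support of $\chi(V(\lambda))\,\mathbf z^\rho\prod_{\alpha\in\Delta^+}(1-q^{-1}\mathbf z^{-\alpha})^{\mathrm{mult}\,\alpha}$, a priori only contained in $\lambda+\rho-Q_+$, is actually contained in $\mathrm{wt}(V(\lambda+\rho))$; this is where the dominance of $\lambda+\rho$, the multiplicity-one occurrence of $V(\lambda+\rho)$ in $V(\lambda)\otimes V(\rho)$, and the nondegeneracy criterion from Kac (used already in Lemma \ref{lem-weights}) all come into play. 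One subtlety to be careful about: because the product is infinite, I should justify that its coefficients are well-defined polynomials in $q^{-1}$ (locally finite in each graded piece), which follows from Proposition \ref{prop-collection}(4) since $\chi_q(V(\rho))$ has that property and $\chi(V(\lambda))$ is a character in the category $\mathcal O$.
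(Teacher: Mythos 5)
Your \emph{if} direction is exactly the paper's argument: specialize to $q=-1$, use Corollary \ref{cor-tensor}(2) together with Lemma \ref{lem-weights} to get $H_{\lambda+\rho}(\mu;-1)=\dim\bigl(V(\lambda)\otimes V(\rho)\bigr)_{\lambda+\rho-\mu}>0$, hence the polynomial is nonzero. That half is complete.

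The \emph{only if} direction is where you have a genuine gap. You correctly identify what must be shown --- that the support of $\chi(V(\lambda))\,\mathbf z^{\rho}\prod_{\alpha\in\Delta^+}(1-q^{-1}\mathbf z^{-\alpha})^{\mathrm{mult}\,\alpha}$ is contained in $\mathrm{wt}(V(\lambda+\rho))$ --- but the argument you sketch does not establish it. The inference ``every contributing term has the form (a weight of $V(\lambda)$) $+\,\rho\,-$ (a nonnegative combination of positive roots), i.e.\ it lies in $\mathrm{wt}(V(\lambda+\rho))$'' is false: such an element is merely in $\lambda+\rho-Q_+$ (already for $A_1$ with $\lambda=0$, the element $\rho-2\alpha_1$ has this form but is not a weight of $V(\rho)$). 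The subsequent appeal to dominance, multiplicity one, and nondegeneracy cannot repair this, because the real issue is that the terms of the expansion over $b\in\mathbf B$ lying outside $\mathrm{wt}(V(\lambda+\rho))$ must \emph{cancel}, and nothing in your sketch detects that cancellation. The paper gets this for free from (\ref{eqn-tensor}): the generating series equals $\sum_{b'\otimes b\in\frak B_{\lambda}\otimes\frak B_{\rho}}G_{\rho}(b;q)\,\mathbf z^{\mathrm{wt}(b'\otimes b)}$, a sum indexed by the crystal $\frak B_{\lambda}\otimes\frak B_{\rho}$, so its support automatically lies in $\mathrm{wt}(V(\lambda)\otimes V(\rho))$, which equals $\mathrm{wt}(V(\lambda+\rho))$ by Lemma \ref{lem-weights}. (The cancellation you would otherwise need is hidden in the fact that $G_{\rho}$ is well defined on $\frak B_{\rho}$, i.e.\ that $H_{\rho}(\nu;q)\equiv 0$ unless $\rho-\nu\in\mathrm{wt}(V(\rho))$; this follows from Remark \ref{rho} since $H_{\rho}(\nu;-1)$ equals the sum of the absolute values of the coefficients of $H_{\rho}(\nu;q)$.) Replacing your support paragraph with a citation of (\ref{eqn-tensor}) closes the proof in two lines.
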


\begin{proof}
We obtain from (\ref{eqn-tensor}) that if $H_{\lambda +\rho} (\mu; q)  \neq 0$ then $\lambda + \rho - \mu $ is a weight of $V(\lambda) \otimes V(\rho)$. Then $\lambda + \rho - \mu $ is a weight of $V(\lambda +\rho)$ by Lemma \ref{lem-weights}. Conversely, 
assume that $\lambda + \rho - \mu $ is a weight of $V(\lambda +\rho)$, so a weight of  $V(\lambda) \otimes V(\rho)$.
By Corollary \ref{cor-tensor} (2), 
\[ \sum_{\mu' \in Q_+} H_{\lambda + \rho} (\mu'; -1) \mathbf z^{\lambda + \rho - \mu'}  = \chi (V(\lambda) \otimes V(\rho)) .\] 
Since $\lambda + \rho - \mu $ is a weight of $V(\lambda) \otimes V(\rho)$, the coefficient $H_{\lambda + \rho} (\mu; -1) \neq 0$. 
Then  $H_{\lambda + \rho} (\mu; q)$ is a nonzero polynomial.
\end{proof}

\medskip

\section{Applications}

We give several applications of our formulas to $q$-deformation of (multi-)partition functions and modular forms, and Kostant's function and multiplicity formula. We also obtain formulas for $H_\lambda(\mu;q)$.

\subsection{multi-partition functions and modular forms}

We will write $\mathcal P = \mathcal P(1)$. For a partition $\mathbf p = (1^{m_1} 2^{m_2} \cdots r^{m_r} \cdots ) \in \mathcal P$, we define  \[ \kappa_q (\mathbf p) = \begin{cases} (-q^{-1})^{\sum m_r} & \text{ if } m_r=0 \text{ or }1 \text{ for all } r ,\\ 0 & \text{ otherwise.} \end{cases} \] We define for $k \ge 1$ \[ \epsilon_{q} (k) = \sum_{\mathbf p \in \mathcal P \atop |\mathbf p|=k} \kappa_q (\mathbf p) ,\]  and set $\epsilon_q(0)=1$.
For example, we have $\epsilon_q(5)= 2 q^{-2} - q^{-1}$ and $\epsilon_q(6)=-q^{-3} + 2 q^{-2} -q^{-1}$.

From the definitions, we have
\[ \prod_{k=1}^\infty (1 -q^{-1} t^k) = 1+ \sum_{\mathbf p \in \mathcal P} \kappa_q (\mathbf p)  t^{|\mathbf p |} = 1+ \sum_{k=1}^\infty \epsilon_q(k) t^k .\] 
Then it follows from Euler's Pentagonal Number Theorem that when $q=1$, we have
\begin{equation} 
\label{eqn-euler} \epsilon_1(k ) =  \begin{cases} (-1)^m & \text{ if } k=\frac 1 2 m(3m \pm 1) ,\\ 0 & \text{ otherwise.} \end{cases}
\end{equation}

We also define for $k \ge 1$ \[ p_q (k) = \sum_{\mathbf p \in \mathcal P \atop |\mathbf p|=k} (1-q^{-1})^{d(\mathbf p )} ,\] where $d(\mathbf p )$ is the same as in the previous sections, and we set $p_q(0)=1$. Note that if $k>0$, $p_{\infty}(k)=p(k)$. Hence we can think of $p_q(k)$ as a $q$-deformation of the partition function.

\begin{Prop} \label{prop-partition}
If $k>0$, then
\begin{equation}\label{eqn-prop-3} \epsilon_q(k)- p_q(k) = \sum_{m=1}^\infty (-1)^m \left \{ p_q ( k - \frac 1 2 m(3m-1)) +  p_q ( k - \frac 1 2 m(3m+1))  \right \} , \end{equation}
where we define $p_q(M)=0$ for all negative integer $M$.
\end{Prop}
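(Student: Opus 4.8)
The plan is to read everything off from three generating-function identities in a single variable $t$ (equivalently, we set $t = \mathbf z^\delta$ and work with formal power series). First, by the very definition of $\epsilon_q$,
\[ \prod_{k=1}^\infty (1 - q^{-1} t^k) = 1 + \sum_{k=1}^\infty \epsilon_q(k) t^k. \]
Second, specializing Proposition \ref{prop-imag} to the case $n=1$ and writing $t$ for $\mathbf z^\delta$, the right-hand side there becomes $\sum_{\mathbf p \in \mathcal P} (1-q^{-1})^{d(\mathbf p)} t^{|\mathbf p|} = \sum_{k=0}^\infty p_q(k) t^k$ by the definition of $p_q$, so
\[ \prod_{k=1}^\infty \frac{1 - q^{-1} t^k}{1 - t^k} = \sum_{k=0}^\infty p_q(k) t^k. \]
Third, Euler's Pentagonal Number Theorem gives
\[ \prod_{k=1}^\infty (1 - t^k) = 1 + \sum_{m=1}^\infty (-1)^m \left( t^{\frac{1}{2} m(3m-1)} + t^{\frac{1}{2} m(3m+1)} \right). \]

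Next I would combine these. Multiplying the second identity by $\prod_{k \ge 1}(1-t^k)$ and substituting the first and third, I obtain
\[ 1 + \sum_{k=1}^\infty \epsilon_q(k) t^k = \left( \sum_{k=0}^\infty p_q(k) t^k \right)\left( 1 + \sum_{m=1}^\infty (-1)^m \left( t^{\frac{1}{2} m(3m-1)} + t^{\frac{1}{2} m(3m+1)} \right) \right). \]
All the products and sums here are legitimate operations on formal power series in $t$, since every factor has constant term $1$; no analytic convergence is involved, so the manipulation is automatically justified in the $t$-adic topology.

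Finally, I would extract the coefficient of $t^k$ on both sides for a fixed $k > 0$. On the left this is $\epsilon_q(k)$; on the right it is $p_q(k) + \sum_{m=1}^\infty (-1)^m \{ p_q(k - \frac{1}{2} m(3m-1)) + p_q(k - \frac{1}{2} m(3m+1)) \}$, where the convention $p_q(M) = 0$ for $M < 0$ truncates the sum over $m$ to the finitely many terms with $\frac{1}{2} m(3m \mp 1) \le k$. Rearranging this equality gives exactly \eqref{eqn-prop-3}.

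I do not expect a genuine obstacle here: the entire content is the translation of Proposition \ref{prop-imag} (with $n=1$) into the partition generating function together with Euler's classical identity. The only points needing a little care are (i) checking that the specialization $\mathbf z^\delta \mapsto t$ in Proposition \ref{prop-imag} really yields $\sum_k p_q(k) t^k$ with the $d(\mathbf p)$-statistic intact — which is immediate from the definition of $p_q$ — and (ii) aligning the pentagonal exponents $\frac{1}{2} m(3m-1)$, $\frac{1}{2} m(3m+1)$ and the sign $(-1)^m$ with the form of the statement.
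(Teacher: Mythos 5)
Your proposal is correct and follows essentially the same route as the paper: specialize Proposition \ref{prop-imag} to $n=1$, substitute $t=\mathbf z^\delta$, invoke Euler's Pentagonal Number Theorem (which the paper packages as the formula for $\epsilon_1(k)$ in (\ref{eqn-euler})), and compare coefficients of $t^k$. No differences worth noting.
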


\begin{proof}
We put $n=1$ in Proposition \ref{prop-imag} and obtain
\[  \prod_{k=1}^\infty (1 - q^{-1} \mathbf z^{k \delta} ) = \left ( \sum_{\mathbf p \in \mathcal P} (1 - q^{-1})^{d(\mathbf p)} \mathbf z^{|\mathbf p| \delta} \right ) \prod_{k=1}^\infty (1 - \mathbf z^{k \delta} ) .\] After the change of variables $\mathbf z^{\delta}  = t$, we obtain
\begin{eqnarray*} 
1+ \sum_{k=1}^\infty \epsilon_q(k) t^k   &=& \prod_{k=1}^\infty (1 - q^{-1} t^k  ) \\ &=& \left ( \sum_{\mathbf p \in \mathcal P} (1 - q^{-1})^{d(\mathbf p )} t^{|\mathbf p|} \right ) \prod_{k=1}^\infty (1 - t^{k} ) \\ &=&  \left ( 1+ \sum_{k=1}^\infty p_q (k) t^k  \right ) \left ( 1+ \sum_{m=1}^\infty    (-1)^m  \left \{  t^{\frac 1 2 m(3m-1)} + t^{\frac 1 2 m(3m+1) } \right \} \right ) , 
\end{eqnarray*}
where we use the definition of $p_q(k)$ and (\ref{eqn-euler}) in the last equality.
We obtain the identity (\ref{eqn-prop-3}) by expanding the product and equating the coefficient of $t^k$ with $\epsilon_q(k)$.
\end{proof}

As a corollary of the proof of Proposition \ref{prop-partition}, we obtain the following.
\begin{Cor} Let $(a;q)_n=\prod_{k=0}^{n-1} (1-a q^k)$. Then
$$\sum_{n=0}^{\infty} \frac {(q^{-1};t)_n}{(t;t)_n} \ t^n=\sum_{k=0}^\infty p_q(k) t^k.
$$
\end{Cor}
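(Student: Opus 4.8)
The Corollary is a $q$-series identity whose right-hand side, $\sum_{k\ge 0} p_q(k)\, t^k$, was already identified in the proof of Proposition \ref{prop-partition} with the product $\prod_{k=1}^\infty (1-q^{-1}t^k)/\prod_{k=1}^\infty (1-t^k)$ (divide the displayed chain of equalities in that proof by $\prod_{k=1}^\infty(1-t^k)$). So it suffices to expand that infinite product as a power series in $t$ and show the coefficient generating function equals $\sum_{n\ge 0}\frac{(q^{-1};t)_n}{(t;t)_n}\,t^n$. In other words, the entire content reduces to the identity
\[
\prod_{k=1}^{\infty} \frac{1-q^{-1}t^k}{1-t^k} \;=\; \sum_{n=0}^{\infty} \frac{(q^{-1};t)_n}{(t;t)_n}\, t^n ,
\]
which is a specialization of the $q$-binomial theorem (Cauchy's identity): $\sum_{n\ge 0}\frac{(a;t)_n}{(t;t)_n}z^n = \frac{(az;t)_\infty}{(z;t)_\infty}$, valid as a formal power series in $z$ (and as an analytic identity for $|t|,|z|<1$). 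The plan is to set $a=q^{-1}$ and $z=t$, so the right side becomes $\frac{(q^{-1}t;t)_\infty}{(t;t)_\infty}=\prod_{k=1}^\infty\frac{1-q^{-1}t^k}{1-t^k}$, exactly the product above.

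\textbf{Steps.} First I would record, straight from the proof of Proposition \ref{prop-partition}, the identity $\sum_{k\ge 0}p_q(k)t^k=\prod_{k=1}^\infty(1-q^{-1}t^k)\big/\prod_{k=1}^\infty(1-t^k)$; this is just the middle and last lines of that computation rearranged, so no new work is needed. Second, I would invoke the $q$-binomial theorem in the formal-power-series form $\sum_{n\ge0}\frac{(a;t)_n}{(t;t)_n}z^n=\frac{(az;t)_\infty}{(z;t)_\infty}$ — one can either cite a standard reference (e.g. Andrews, \emph{The Theory of Partitions}) or give the one-line proof that both sides satisfy the same functional equation $f(z)=\frac{1-az}{1-z}f(tz)$ together with $f(0)=1$, which pins down the coefficients uniquely as formal power series in $z$. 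Third, substitute $a=q^{-1}$ and $z=t$ and observe $(q^{-1}t;t)_\infty/(t;t)_\infty=\prod_{k\ge1}(1-q^{-1}t^k)/(1-t^k)$, matching the product from Step 1. Combining gives the claimed equality.

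\textbf{Main obstacle.} There is essentially no obstacle of substance: the only point requiring care is that $z=t$ is not "generic" in the $q$-binomial theorem, so one should be slightly careful that the specialization is legitimate. It is: the $q$-binomial identity holds as an identity of formal power series in $z$ with coefficients in $\mathbb{Z}[q^{-1}][[t]]$ (equivalently, it holds for all $|z|<1$ with $|t|<1$), and setting $z=t$ only increases the $t$-order of each term $\frac{(a;t)_n}{(t;t)_n}z^n$, so the resulting sum is a well-defined power series in $t$; alternatively one checks the substituted series still satisfies a first-order recursion that matches the product. So the write-up is short: cite/prove $q$-binomial, specialize, and quote the product formula for $\sum p_q(k)t^k$ from the previous proof.
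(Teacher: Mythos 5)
Your proposal is correct and follows essentially the same route as the paper: the paper likewise invokes the $q$-binomial theorem to write $\prod_{k\ge 1}(1-q^{-1}t^k)=\bigl(\sum_{n\ge 0}\tfrac{(q^{-1};t)_n}{(t;t)_n}t^n\bigr)\prod_{k\ge 1}(1-t^k)$ and then compares with the identity $\prod_{k\ge 1}(1-q^{-1}t^k)=\bigl(\sum_{k\ge 0}p_q(k)t^k\bigr)\prod_{k\ge 1}(1-t^k)$ extracted from the proof of Proposition \ref{prop-partition}. Your extra remark justifying the specialization $z=t$ in the $q$-binomial theorem is a harmless refinement of what the paper leaves implicit.
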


\begin{proof} By the $q$-binomial theorem, 
$$\prod_{k=1}^\infty (1-q^{-1} t^k)= \left (\sum_{n=0}^\infty \frac {(q^{-1};t)_n}{(t;t)_n} \ t^n \right ) \prod_{k=1}^\infty (1-t^k).
$$
Comparing this with the identity in the proof of Proposition 3.2, we obtain the result. 
\end{proof}

\begin{Rmk} When $q\to\infty$, we have
$$\sum_{n=0}^{\infty} \frac {t^n}{(t;t)_n} =\sum_{\mathbf p\in \mathcal P} t^{|\mathbf p|}=\sum_{n=0}^\infty p(n) t^n.
$$
This is a special case of \cite[Corollary 2.2]{An}.
\end{Rmk}

\medskip

We generalize Proposition \ref{prop-partition} to the case of multi-partitions.  For a multi-partition $\mathbf p = (\rho^{(1)}, \dots , \rho^{(n)} ) \in \mathcal P(n)$, we define  \[ \kappa_q (\mathbf p) =  \prod_{i=1}^n \kappa_q (\rho^{(i)}) , \] and for $k \ge 1$ \begin{equation} \label{eqn-eps} \epsilon_{q,n} (k) = \sum_{\mathbf p \in \mathcal P(n) \atop |\mathbf p|=k} \kappa_q (\mathbf p) ,\end{equation}  and set $ \epsilon_{q,n}(0)=1$.
From the definitions, we have
\[ \prod_{k=1}^\infty (1 -q^{-1} t^k)^n = 1+ \sum_{\mathbf p \in \mathcal P(n)} \kappa_q (\mathbf p)  t^{|\mathbf p |} = \sum_{k=0}^\infty \epsilon_{q,n}(k) t^k .\] 
One can see that if $k>0$, we have $\epsilon_{\infty,n}(k)=0$. 

\begin{Rmk} Note that $\epsilon_{1,n}(k)$ is a classical arithmetic function related to modular forms. For example, we have $\epsilon_{1, 24}(k) = \tau(k+1)$, where $\tau(k)$ is the Ramanujan $\tau$-function. Thus the function $\epsilon_{q,n}(k)$ should be considered as a $q$-deformation of the function $\epsilon_{1,n} (k)$.
\end{Rmk}

We also define for $k \ge 1$ \[ p_{q, n} (k) = \sum_{\mathbf p \in \mathcal P(n) \atop |\mathbf p|=k} (1-q^{-1})^{d(\mathbf p )} ,\] and set $p_{q, n}(0)=1$. Notice that if $k>0$, the function $p_{\infty, n}(k)$ is nothing but the multi-partition function with $n$-components. Hence we can think of $p_{q, n} (k)$ as a $q$-deformation of the multi-partition function.

\begin{Prop} \label{prop-recur}
If $k>0$, then
\begin{equation} \label{eqn-prop-4} \epsilon_{q,n}(k) = \sum_{r=0}^k \epsilon_{1,n} (r) p_{q, n}(k-r) . \end{equation}
\end{Prop}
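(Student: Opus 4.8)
The plan is to derive the identity (\ref{eqn-prop-4}) purely from the generating-function identities already established, by inserting one extra factor and comparing coefficients. First I would recall the two product formulas at hand: the definition of $\epsilon_{q,n}$ gives
\[ \sum_{k=0}^\infty \epsilon_{q,n}(k)\, t^k = \prod_{k=1}^\infty (1 - q^{-1} t^k)^n , \]
and, from Proposition \ref{prop-imag} with the substitution $\mathbf z^\delta = t$ together with the definition of $p_{q,n}$,
\[ \prod_{k=1}^\infty (1 - q^{-1} t^k)^n = \left ( \sum_{k=0}^\infty p_{q,n}(k)\, t^k \right ) \prod_{k=1}^\infty (1 - t^k)^n . \]
(This is exactly the $n$-fold version of the manipulation in the proof of Proposition \ref{prop-partition}, where the passage from $\prod (1-q^{-1}t^k)^n/(1-t^k)^n$ to $\sum p_{q,n}(k) t^k$ is the content of (\ref{eqn-main-1}).)

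The key observation is that the remaining product $\prod_{k=1}^\infty (1 - t^k)^n$ is itself the generating function for $\epsilon_{1,n}$: setting $q = 1$ in the definition of $\epsilon_{q,n}$ yields
\[ \prod_{k=1}^\infty (1 - t^k)^n = \sum_{r=0}^\infty \epsilon_{1,n}(r)\, t^r . \]
Substituting this into the previous display gives
\[ \sum_{k=0}^\infty \epsilon_{q,n}(k)\, t^k = \left ( \sum_{k=0}^\infty p_{q,n}(k)\, t^k \right ) \left ( \sum_{r=0}^\infty \epsilon_{1,n}(r)\, t^r \right ) . \]
Expanding the product on the right and equating the coefficient of $t^k$ on both sides yields
\[ \epsilon_{q,n}(k) = \sum_{r=0}^k \epsilon_{1,n}(r)\, p_{q,n}(k-r) , \]
which is (\ref{eqn-prop-4}). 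For $k > 0$ this is the stated identity; the $k = 0$ case is the trivial $\epsilon_{q,n}(0) = \epsilon_{1,n}(0) p_{q,n}(0) = 1$, consistent with the conventions $\epsilon_{\cdot,n}(0) = p_{\cdot,n}(0) = 1$.

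There is no real obstacle here: every ingredient is either a definition or an already-proved identity (Proposition \ref{prop-imag}, i.e.\ (\ref{eqn-main-1})), and the only ``work'' is the bookkeeping of recognizing the leftover Euler-type product $\prod (1-t^k)^n$ as the $q=1$ specialization of the $\epsilon$-generating series. The one point deserving a sentence of care is to make explicit that the factorization $\prod(1-q^{-1}t^k)^n = (\sum p_{q,n}(k) t^k)\prod(1-t^k)^n$ is legitimate as an identity of formal power series in $t$ — each coefficient is a finite sum — so that coefficient extraction is valid; this is immediate since $p_{q,n}(k) \in \mathbb Z[q^{-1}]$ is defined by a finite sum over multi-partitions of weight $k$. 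I would present the proof as essentially a two-line computation preceded by the recollection of the three generating functions.
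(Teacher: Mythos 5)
Your proof is correct and follows essentially the same route as the paper: multiply through the identity of Proposition \ref{prop-imag} to get $\prod_k(1-q^{-1}t^k)^n = \bigl(\sum_k p_{q,n}(k)t^k\bigr)\prod_k(1-t^k)^n$ after setting $\mathbf z^\delta = t$, recognize the last factor as $\sum_r \epsilon_{1,n}(r)t^r$, and compare coefficients of $t^k$. The only difference is presentational (your explicit remark on the validity of formal power series manipulation), so nothing further is needed.
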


\begin{proof}
We obtain from Proposition \ref{prop-imag}
\[  \prod_{k=1}^\infty (1 - q^{-1} \mathbf z^{k \delta} )^n = \left ( \sum_{\mathbf p \in \mathcal P(n)} (1 - q^{-1})^{d(\mathbf p)} \mathbf z^{|\mathbf p| \delta} \right ) \prod_{k=1}^\infty (1 - \mathbf z^{k \delta} )^n .\] After the change of variables $\mathbf z^{\delta}  = t$, we obtain from the definitions
\begin{eqnarray*} 
\sum_{k=0}^\infty \epsilon_{q,n}(k) t^k   &=& \left ( \sum_{\mathbf p \in \mathcal P(n)} (1 - q^{-1})^{d(\mathbf p )} t^{|\mathbf p|} \right ) \prod_{k=1}^\infty (1 - t^{k} )^n \\ &=&  \left (  \sum_{r=0}^\infty p_{q,n} (r) t^r  \right ) \left (  \sum_{s=0}^\infty    \epsilon_{1,n}(s) t^s  \right ) .
\end{eqnarray*} Now the identity (\ref{eqn-prop-4}) is clear.
\end{proof}

By taking $q\to\infty$, we obtain the following identity
$$0=\sum_{r=0}^k \epsilon_{1,n}(r) p_{\infty, n}(k-r),\qquad k>0,
$$ 
where $p_{\infty, n}(k)$ is the multi-partition function with $n$-components. This classical identity is also a consequence of the following identities:
$$\prod_{k=1}^\infty (1 -t^k)^n = \sum_{k=0}^\infty \epsilon_{1,n}(k) t^k \quad \text{ and } \quad \prod_{k=1}^\infty (1 -t^k)^{-n} = \sum_{k=0}^\infty p_{\infty,n}(k) t^k.
$$

\begin{Exa}
When the affine Kac-Moody algebra $\frak g$ is of type $X^{(1)}_{24}$, $X=A, B, C$ or $D$, we  have
\[  \epsilon_{q,24}(k) = \sum_{r=0}^k \tau (r+1) p_{q, 24}(k-r) \] and
 \[ 0= \sum_{r=0}^k \tau(r+1) p_{\infty, 24} (k-r) ,\] where $\tau(k)$ is the Ramanujan $\tau$-function.
If $k=2$, the first identity becomes
 \[ \epsilon_{q, 24} (2) = \tau(1) p_{q, 24}(2) +\tau(2) p_{q, 24}(1) +\tau(3) p_{q, 24}(0) .  \] Through some computations, we obtain \[  \epsilon_{q, 24} (2) = 276 q^{-2}   -24 q^{-1}.   \] On the other hand, we have
\begin{eqnarray*}  & & \tau(1) p_{q, 24}(2) +\tau(2) p_{q, 24}(1) +\tau(3) p_{q, 24}(0) \\ &=&
p_{q, 24}(2)  -24 p_{q, 24}(1) +252 \\ &=&  \{ 276 (1-q^{-1})^2  + 48 (1-q^{-1})  \} -24 \cdot 24 (1-q^{-1}) +252 \\ &=&  276(1-q^{-1})^2 -528 (1-q^{-1}) + 252 \\ & =&  276 q^{-2}   -24 q^{-1} = \epsilon_{q, 24} (2) .\end{eqnarray*} We also see that 
\begin{equation*}  \tau(1) p_{\infty , 24}(2) +\tau(2) p_{\infty, 24}(1) +\tau(3) p_{\infty , 24}(0) = 324 - 24 \cdot 24 + 252 =0 .  \end{equation*}
\end{Exa}

\smallskip

Now we consider the whole set of positive roots, not just the set of imaginary positive roots, and obtain interesting identities. We begin with the identity (\ref{eqn-important}). Recalling the description of the set of positive roots, we obtain
\begin{eqnarray} \sum_{\mu \in Q_+} H_\rho (\mu;q) \mathbf z^{-\mu} &=& \mathbf z^{-\rho} \chi_q (V(\rho)) = \prod_{\alpha \in \Delta_+} (1 - q^{-1} \mathbf z^{-\alpha} )^{\mathrm{mult} \, \alpha} \nonumber \\ &=& \left ( \prod_{k=1}^\infty (1 - q^{-1} \mathbf z^{-k \delta} )^{n} \prod_{\alpha \in \Delta_{\mathrm{cl} } }(1 - q^{-1} \mathbf z^{\alpha -k \delta} ) \right ) \prod_{\alpha \in \Delta^+_{\mathrm{cl} } } (1 - q^{-1} \mathbf z^{-\alpha} ), \label{eqn-mmd}
\end{eqnarray}
where $\Delta_{\mathrm{cl}}$ is the set of classical roots.

Let $\mathcal Z = \{ \sum_{\alpha \in Q_+} c_\alpha \mathbf z^{-\alpha} \, | \, c_\alpha \in \mathbb C \}$ be the set of (infinite) formal sums. Recall that we have the element $d \in \frak h$ such that $\alpha_0 (d) = 1$ and $\alpha_j(d)=0$, $j \in I \setminus \{ 0 \}$. Let $\frak h_{\mathbb Z}$ be the $\mathbb Z$-span of $\{ h_0, h_1, \dots , h_n, d \}$.
We define the evaluation map $EV_t: \mathcal Z \times \frak h_{\mathbb Z} \rightarrow \mathbb C[[t]]$ by
\[ EV_t \left ( \sum_\alpha c_\alpha \mathbf z^{-\alpha} , \mathbf s \right ) = \sum_\alpha c_\alpha t^{\alpha ( \mathbf s)}, \qquad \mathbf s \in \frak h_{\mathbb Z} .\]   Then we see that $EV_t(\cdot , d)$ is the same as the {\em basic specialization} in \cite[p.219]{Kac} with $q$ replaced by $t$. We apply $EV_t(\cdot, d)$ to (\ref{eqn-mmd}) and obtain
\begin{equation} \label{eqn-rrr} (1-q^{-1})^{|\Delta^+_{\mathrm{cl} }|}
\prod_{k=1}^\infty (1 -q^{-1}t^k)^{\dim \frak g_{\mathrm{cl}}} = \sum_{k=0}^\infty \left ( \sum_{\mu \in Q_{+,\mathrm{cl}}} H_\rho(k \alpha_0 + \mu ;q) \right ) t^k , \end{equation} where $\frak g_{\mathrm{cl}}$ is the finite-dimensional simple Lie algebra corresponding to $\frak g$, and $Q_{+,\mathrm{cl}}$ is the $\mathbb Z_{\ge 0}$-span of $\{ \alpha_1, \dots , \alpha_n \}$. We write $|\Delta^+_{\mathrm{cl} }| =r$ and  $\dim \frak g_{\mathrm{cl}}=N$ so that $N=2r+n$.
By comparing (\ref{eqn-rrr}) with the identity $\prod_{k=1}^\infty (1-q^{-1} t^k)^n=\sum_{k=0}^\infty \epsilon_{q,n}(k) t^k$, we obtain:

\begin{Prop} \label{prop-cl-H}
\[ \epsilon_{q,N}(k) = \sum_{\mu \in Q_{+,\mathrm{cl}}} H_\rho(k \alpha_0 + \mu;q ) \Big /  (1-q^{-1})^r . \]
\end{Prop}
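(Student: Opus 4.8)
The plan is to read the claimed identity off directly from equation (\ref{eqn-rrr}), which has already been established above by applying the basic specialization $EV_t(\cdot, d)$ to the factored expression (\ref{eqn-mmd}) for $\chi_q(V(\rho))\mathbf z^{-\rho}$. The first step is to observe that the exponent $N = \dim \frak g_{\mathrm{cl}} = 2r + n$ of the infinite product on the left-hand side of (\ref{eqn-rrr}) is exactly the number of components for which the deformed multi-partition function is being evaluated, so that the defining generating function of $\epsilon_{q,n}(k)$, taken with $n$ replaced by $N$, reads
\[ \prod_{k=1}^\infty (1 - q^{-1} t^k)^N = \sum_{k=0}^\infty \epsilon_{q,N}(k)\, t^k . \]
Substituting this into (\ref{eqn-rrr}) gives the equality of formal power series
\[ (1-q^{-1})^r \sum_{k=0}^\infty \epsilon_{q,N}(k)\, t^k = \sum_{k=0}^\infty \left( \sum_{\mu \in Q_{+,\mathrm{cl}}} H_\rho(k\alpha_0 + \mu; q) \right) t^k . \]

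The second step is to compare the coefficient of $t^k$ on the two sides: for every $k \ge 0$ this yields $(1-q^{-1})^r\,\epsilon_{q,N}(k) = \sum_{\mu \in Q_{+,\mathrm{cl}}} H_\rho(k\alpha_0 + \mu; q)$, and since $(1-q^{-1})^r$ is a nonzero element of $\mathbb Z[q^{-1}]$ one may divide by it to obtain the assertion. I would also include the remark that for fixed $k$ the inner sum on the right is genuinely finite: by Remark \ref{rho} the polynomial $H_\rho(k\alpha_0 + \mu; q)$ vanishes unless $\rho - k\alpha_0 - \mu$ is a weight of $V(\rho)$, and only finitely many $\mu \in Q_{+,\mathrm{cl}}$ can meet this requirement because the $d$-homogeneous component of $V(\rho)$ of degree $k$ is finite-dimensional; this is precisely what makes the passage to (\ref{eqn-rrr}) via $EV_t(\cdot, d)$ legitimate.

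I do not expect a genuine obstacle here: all the substantive work — decomposing $\Delta^+$ into the imaginary roots $k\delta$ of multiplicity $n$, the non-classical real roots $\alpha + k\delta$ with $\alpha \in \Delta_{\mathrm{cl}}$ and $k \ge 1$ (altogether $2r$ per level $k$), and the classical positive roots, and then tracking the action of the basic specialization on each group of factors — was already carried out in deriving (\ref{eqn-mmd}) and (\ref{eqn-rrr}). The only thing requiring care is the bookkeeping: checking that the contributions of the imaginary factors and of the non-classical real factors assemble into the single exponent $n + 2r = N$, and that the remaining classical factor $\prod_{\alpha \in \Delta^+_{\mathrm{cl}}}(1 - q^{-1})$ produces exactly the power $(1-q^{-1})^r$ that is then cancelled. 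Once these are confirmed, extracting coefficients finishes the proof.
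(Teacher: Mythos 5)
Your proposal is correct and follows exactly the paper's (implicit) argument: the proposition is obtained by substituting the defining generating function $\prod_{k=1}^\infty(1-q^{-1}t^k)^N=\sum_k \epsilon_{q,N}(k)t^k$ into (\ref{eqn-rrr}) and comparing coefficients of $t^k$. Your added remarks on the finiteness of the inner sum and the bookkeeping $n+2r=N$ under the specialization $EV_t(\cdot,d)$ are accurate and consistent with the derivation of (\ref{eqn-mmd}) and (\ref{eqn-rrr}) already carried out in the text.
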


By Definition \ref{def-a}, $\epsilon_{q,N}(k)$ is a power series in $q^{-1}$ in the above formula. However, one can see from (\ref{eqn-eps}) that $\epsilon_{q,N}(k)$ is actually a polynomial in $q^{-1}$.

\begin{Exa} \label{exa-cl-H}
We take $\frak g$ to be of type $A^{(1)}_4$. Then the classical Lie algebra $\frak g_{\mathrm{cl}}$ is of type $A_4$, and $r=|\Delta^+_{\mathrm{cl} }|=10$ and $N=\dim \frak g_{\mathrm{cl}}=24$. Taking the limit $q \rightarrow 1$, we obtain
$$\tau(k+1)= \lim_{q \rightarrow 1} \sum_{\mu \in Q_{+,\mathrm{cl}}} H_\rho(k \alpha_0 + \mu;q ) \Big /  (1-q^{-1})^{10}.
$$ Therefore the sum  $\sum_{\mu \in Q_{+,\mathrm{cl}}} H_\rho(k \alpha_0 + \mu;q )$ is always divisible by $(1-q^{-1})^{10}$. However, the famous Lehmer's conjecture predicts that the sum is never divisible by $(1-q^{-1})^{11}$.
\end{Exa}

\subsection{Kostant's function and the polynomial $H_\lambda(\mu;q)$}

In this subsection, let $\frak g$ be a finite-dimensional simple Lie algebra (finite type) or an untwisted affine Kac-Moody algebra (affine type). 

\begin{Def}
We define the functions $K^\infty_q(\mu)$ and $K^1_q(\mu)$ by 
\[ \sum_{\mu \in Q_+} K^\infty_{q}(\mu) \mathbf z^\mu = \prod_{\alpha \in \Delta_+} \left ( \frac {1 - q^{-1} \mathbf z^\alpha } { 1 - \mathbf z^{\alpha} } \right )^{\mathrm{mult}\, \alpha} = \sum_{b \in \mathbf G} (1-q^{-1})^{d(\phi(b)}\mathbf z^{\mathrm{wt}(b)}\] and \[ \sum_{\mu \in Q_+} K^1_{q}(\mu) \mathbf z^\mu = \prod_{\alpha \in \Delta_+} \left ( {1 - q^{-1} \mathbf z^\alpha } \right )^{-\mathrm{mult}\, \alpha} = \sum_{b \in \mathbf G} q^{-|\phi(b)|} \mathbf z^{\mathrm{wt}(b)}. \]  We set $K^\infty_q(\mu)=K^1_q(\mu)=0$ if $\mu\not\in Q_+$.
\end{Def}

\begin{Rmk} \hfill
\begin{enumerate}
\item Note that both $K^\infty_\infty(\mu)$ with $q=\infty$ and $K^1_1 (\mu)$ with $q=1$ are equal to the classical Kostant's partition function $K(\mu)$. Hence both of them can be considered as $q$-deformation of Kostant's function.
\item The function $K^1_q(\mu)$ was introduced by Lusztig \cite{Lusz-sing} for finite types. See also S. Kato's paper \cite{Kato}. On the other hand, the function $K^\infty_q(\mu)$ for finite types can be found in the work of Guillemin and Rassart \cite{GR}.
\end{enumerate}
\end{Rmk}

We obtain from the Casselman-Shalika formula (Proposition \ref{prop-aa})
\begin{eqnarray*}  \mathbf z^{- \lambda} \chi(V(\lambda)) &=& \sum_{\beta \in Q_+} (\dim V(\lambda)_{\lambda -\beta} )\mathbf z^{-\beta} \\ &=&  \mathbf z^{- \lambda -\rho } \chi_q(V(\lambda+\rho)) \prod_{\alpha \in \Delta_+} (1 -  q^{-1} z^{-\alpha} )^{- \mathrm{mult}\, \alpha}  \\ &=& \left ( \sum_{\mu \in Q_+} H_{\lambda +\rho}(\mu ;q) \mathbf z^{-\mu} \right ) \left (  \sum_{\nu \in Q_+} K^1_{q}(\nu) \mathbf z^{-\nu}  \right) .
\end{eqnarray*}
Therefore, we have a $q$-deformation of the Kostant's multiplicity formula:
\begin{Prop} \label{prop-KMF}
\[ \dim V(\lambda)_{\lambda-\beta} = \sum_{\mu \in Q_+} H_{\lambda +\rho}(\mu;q)K^1_{q}(\beta -\mu) .\]
\end{Prop}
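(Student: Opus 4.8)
The plan is to obtain the identity by a straightforward coefficient comparison in a product of two formal series, following the chain of equalities displayed just before the statement.

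First I would record the key series identity
\[ \mathbf z^{-\lambda}\,\chi(V(\lambda)) \;=\; \Bigl(\mathbf z^{-\lambda-\rho}\,\chi_q(V(\lambda+\rho))\Bigr)\prod_{\alpha\in\Delta^+}(1-q^{-1}\mathbf z^{-\alpha})^{-\mathrm{mult}\,\alpha}. \]
This is immediate from the affine Casselman-Shalika formula of Proposition \ref{prop-aa}, which gives $\chi_q(V(\lambda+\rho))=\chi(V(\lambda))\,\chi_q(V(\rho))$, together with (\ref{eqn-important}), which identifies $\chi_q(V(\rho))$ with $\mathbf z^{\rho}\prod_{\alpha\in\Delta^+}(1-q^{-1}\mathbf z^{-\alpha})^{\mathrm{mult}\,\alpha}$: one divides by this last product and multiplies by $\mathbf z^{-\lambda}$. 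Next I would expand the three ingredients as formal sums indexed by $Q_+$. By Definition \ref{def-a}, $\mathbf z^{-\lambda-\rho}\chi_q(V(\lambda+\rho))=\sum_{\mu\in Q_+}H_{\lambda+\rho}(\mu;q)\,\mathbf z^{-\mu}$; by the definition of $K^1_q$, read with $\mathbf z^{\alpha}$ replaced by $\mathbf z^{-\alpha}$ throughout, $\prod_{\alpha\in\Delta^+}(1-q^{-1}\mathbf z^{-\alpha})^{-\mathrm{mult}\,\alpha}=\sum_{\nu\in Q_+}K^1_q(\nu)\,\mathbf z^{-\nu}$; and the left-hand side is $\mathbf z^{-\lambda}\chi(V(\lambda))=\sum_{\beta\in Q_+}\bigl(\dim V(\lambda)_{\lambda-\beta}\bigr)\mathbf z^{-\beta}$.

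I would then note that for a fixed $\beta\in Q_+$ there are only finitely many pairs $(\mu,\nu)\in Q_+\times Q_+$ with $\mu+\nu=\beta$, since this forces every simple-root coordinate of $\mu$ to lie between $0$ and the corresponding coordinate of $\beta$; hence the product of the two series on the right is a well-defined element of $\mathcal Z$, computed by the usual finite convolution. Comparing the coefficient of $\mathbf z^{-\beta}$ on both sides then yields
\[ \dim V(\lambda)_{\lambda-\beta} \;=\; \sum_{\mu\in Q_+}H_{\lambda+\rho}(\mu;q)\,K^1_q(\beta-\mu), \]
where the terms with $\beta-\mu\notin Q_+$ drop out because $K^1_q$ vanishes there. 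I do not expect any real obstacle here: the only point deserving a word of care is the finiteness just mentioned, which legitimizes the coefficient extraction. It is also worth remarking that the identity holds for every value of $q$, so that although no individual summand on the right is independent of $q$, the sum is.
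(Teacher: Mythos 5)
Your argument is correct and is essentially the paper's own proof: the paper likewise combines Proposition \ref{prop-aa} with (\ref{eqn-important}) to write $\mathbf z^{-\lambda}\chi(V(\lambda))$ as the product of $\sum_{\mu}H_{\lambda+\rho}(\mu;q)\mathbf z^{-\mu}$ and $\sum_{\nu}K^1_q(\nu)\mathbf z^{-\nu}$, and then reads off the coefficient of $\mathbf z^{-\beta}$. Your explicit remark on the finiteness of the convolution is a harmless (and welcome) addition that the paper leaves implicit.
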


In order to see that this is indeed a $q$-deformation of the Kostant's multiplicity formula, we need to
determine the value of $H_{\lambda+\rho}(\mu; 1)$. 

\begin{Lem} \label{lem-H-1}  We have \[H_{\lambda +\rho}(\mu;1) = \begin{cases}(-1)^{\ell(w)} &\text{ if } w \circ \lambda = - \mu \text{ for some } w \in W , \\ \quad 0 & \text{ otherwise,}
\end{cases} \] where we define $w \circ \lambda = w (\lambda + \rho) -\lambda -\rho$ for $w \in W$ and $\lambda \in P_+$.
\end{Lem}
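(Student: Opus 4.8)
The plan is to compute $H_{\lambda+\rho}(\mu;1)$ directly from Definition \ref{def-a} by taking the limit $q\to 1$ of the generating series. Recall from Definition \ref{def-a} that
\[
\sum_{\mu\in Q_+} H_{\lambda+\rho}(\mu;q)\,\mathbf z^{\lambda+\rho-\mu}
= \Bigl(\sum_{w\in W}(-1)^{\ell(w)}\mathbf z^{w(\lambda+\rho)}\Bigr)
\Bigl(\sum_{b\in\mathbf B}(1-q^{-1})^{d(\phi(b))}\mathbf z^{-\mathrm{wt}(b)}\Bigr).
\]
When $q=1$ every factor $(1-q^{-1})^{d(\phi(b))}$ vanishes unless $d(\phi(b))=0$, and the unique element $b\in\mathbf B$ with $d(\phi(b))=0$ is the one with $\mathbf c=(0,0,0)$, i.e. $b=1$ with $\mathrm{wt}(b)=0$. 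Hence the second factor specializes to $1$, and we get
\[
\sum_{\mu\in Q_+} H_{\lambda+\rho}(\mu;1)\,\mathbf z^{\lambda+\rho-\mu}
= \sum_{w\in W}(-1)^{\ell(w)}\mathbf z^{w(\lambda+\rho)}.
\]

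First I would make this specialization argument precise: by Theorem \ref{thm-first} the second factor equals $\prod_{\alpha\in\Delta^+}\bigl(\tfrac{1-q^{-1}\mathbf z^{-\alpha}}{1-\mathbf z^{-\alpha}}\bigr)^{\mathrm{mult}\,\alpha}$, which is visibly $1$ at $q=1$ (each numerator cancels the corresponding factor in the denominator), so no convergence subtlety is needed — it is a literal identity of formal series in $\mathbf z^{-\alpha}$, $\alpha\in Q_+$, with coefficients in $\mathbb Z[q^{-1}]$, and we are evaluating those polynomial coefficients at $q=1$. Then I would extract the coefficient of $\mathbf z^{\lambda+\rho-\mu}$ on the right: a term $(-1)^{\ell(w)}\mathbf z^{w(\lambda+\rho)}$ contributes exactly when $w(\lambda+\rho)=\lambda+\rho-\mu$, i.e. when $\mu=(\lambda+\rho)-w(\lambda+\rho)=-\bigl(w(\lambda+\rho)-(\lambda+\rho)\bigr)=-(w\circ\lambda)$ in the notation $w\circ\lambda=w(\lambda+\rho)-\lambda-\rho$ of the statement. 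So $H_{\lambda+\rho}(\mu;1)=\sum_{w:\,w\circ\lambda=-\mu}(-1)^{\ell(w)}$.

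It remains to check that this sum has at most one term, i.e. that the map $w\mapsto w(\lambda+\rho)$ is injective on $W$. Since $\lambda\in P_+$, the weight $\lambda+\rho$ is regular dominant (strictly dominant), and it is standard (Kac, Chapter 3, or the analogue of the finite-type fact) that the stabilizer of a regular weight in $W$ is trivial; hence $w(\lambda+\rho)=w'(\lambda+\rho)$ forces $w=w'$. Therefore for each $\mu$ there is at most one $w$ with $w\circ\lambda=-\mu$, giving $H_{\lambda+\rho}(\mu;1)=(-1)^{\ell(w)}$ if such a $w$ exists and $0$ otherwise, as claimed.

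The only mild obstacle is the bookkeeping at the first step — being careful that the specialization $q\to 1$ is legitimate termwise (which it is, because after fixing a weight $\lambda+\rho-\mu$ only finitely many pairs $(w,b)$ contribute to that coefficient, and each coefficient $H_{\lambda+\rho}(\mu;q)$ is a genuine polynomial in $q^{-1}$ by Definition \ref{def-a}) — together with recalling the regularity of $\lambda+\rho$ and the triviality of its $W$-stabilizer in the Kac–Moody setting. Everything else is a direct coefficient comparison.
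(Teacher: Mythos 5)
Your proof is correct and follows essentially the same route as the paper: specialize Definition \ref{def-a} at $q=1$ so that only the $b$ with $d(\phi(b))=0$ survives, reducing the generating series to $\sum_{w\in W}(-1)^{\ell(w)}\mathbf z^{w(\lambda+\rho)}$, and then compare coefficients via $\lambda+\rho-\mu=w(\lambda+\rho)\iff w\circ\lambda=-\mu$. The uniqueness of $w$ (from the regularity of $\lambda+\rho$), which you verify, is exactly the point the paper records in the remark immediately following the lemma.
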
 
Note that such an element $w \in W$ is unique if it exists, so there is no ambiguity in the assertion.
\begin{proof}
From Definition \ref{def-a}, we obtain
\[ \sum_{\mu \in Q_+} H_{\lambda +\rho}(\mu;1) \mathbf z^{\lambda + \rho - \mu} = \sum_{w \in W} (-1)^{\ell(w)} \mathbf z^{w(\lambda +\rho)}. \] The condition $\lambda + \rho - \mu = w(\lambda +\rho)$ is equivalent to $w \circ \lambda = -\mu$. It completes the proof.
\end{proof}

Now we take $q=1$ in Proposition \ref{prop-KMF} and use Lemma \ref{lem-H-1} to obtain the classical Kostant's multiplicity formula
\[ \dim V(\lambda)_{\lambda-\beta} =\sum_{w \in W} (-1)^{\ell(w)} K(w \circ \lambda + \beta ). \]
Note that the sum is actually a finite sum. Indeed, we have $w \circ \lambda <0$ for each $w \in W$ and  $w \circ \lambda +\beta \ge 0$ only for finitely many $w \in W$ for fixed $\lambda \in P_+$ and $\beta \in Q_+$. For the same reason, the sum in (\ref{eqn-K-infty}) below is also a finite sum.

\begin{Rmk} We have obtained in the previous section (Corollary \ref{cor-HHH})
\begin{eqnarray}
H_{\lambda +\rho}(\mu ;\infty) &=& \dim V(\lambda)_{\lambda - \mu}, \label{eqn-H-inf} \\
H_{\lambda +\rho}(\mu ;-1) &=& \dim (V(\lambda) \otimes V(\rho))_{\lambda + \rho -\mu} . \label{eqn-H-minus}
\end{eqnarray}
When $\frak g$ is of finite type, we defined $H_{\lambda}(\mu; q)$ in \cite{KimLee-1} as in Definition 2.2, and we can prove the analogous results.
\end{Rmk}

We derive a formula for $H_{\lambda+\rho} (\mu;q)$ in the following proposition.
\begin{Prop}  \label{prop-K-infty}
 \begin{equation}  \label{eqn-K-infty}
 H_{\lambda+\rho} (\mu;q) =  \sum_{w \in W} (-1)^{\ell(w)} K^\infty_{q}(w \circ \lambda + \mu).
\end{equation}
\end{Prop}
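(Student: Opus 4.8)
The plan is to read off the identity directly by multiplying together the two generating-function descriptions we already have, then extracting the coefficient of a single monomial $\mathbf z^{\lambda+\rho-\mu}$.

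First I would recall that by Theorem \ref{thm-first} the canonical-basis side of (\ref{eqn-main-2}), evaluated at $\mathbf z^{-1}$, is exactly the generating function of $K^\infty_q$; that is,
\[
\sum_{b \in \mathbf B} (1-q^{-1})^{d(\phi(b))}\,\mathbf z^{-\mathrm{wt}(b)} \;=\; \prod_{\alpha \in \Delta^+}\left(\frac{1-q^{-1}\mathbf z^{-\alpha}}{1-\mathbf z^{-\alpha}}\right)^{\mathrm{mult}\,\alpha} \;=\; \sum_{\nu \in Q_+} K^\infty_q(\nu)\,\mathbf z^{-\nu},
\]
using the Definition of $K^\infty_q$ (with the convention $K^\infty_q(\nu)=0$ for $\nu\notin Q_+$). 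Next, Definition \ref{def-a} applied to $\lambda+\rho$ gives
\[
\sum_{\mu \in Q_+} H_{\lambda+\rho}(\mu;q)\,\mathbf z^{\lambda+\rho-\mu} \;=\; \left(\sum_{w \in W}(-1)^{\ell(w)}\mathbf z^{w(\lambda+\rho)}\right)\left(\sum_{b \in \mathbf B}(1-q^{-1})^{d(\phi(b))}\mathbf z^{-\mathrm{wt}(b)}\right),
\]
so substituting the previous display yields
\[
\sum_{\mu \in Q_+} H_{\lambda+\rho}(\mu;q)\,\mathbf z^{\lambda+\rho-\mu} \;=\; \sum_{w \in W}\sum_{\nu \in Q_+} (-1)^{\ell(w)} K^\infty_q(\nu)\,\mathbf z^{w(\lambda+\rho)-\nu}.
\]

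Then I would compare coefficients of $\mathbf z^{\lambda+\rho-\mu}$. Since the map $\mu \mapsto \lambda+\rho-\mu$ is injective on $Q_+$, this comparison is legitimate. On the right-hand side, the monomial $\mathbf z^{w(\lambda+\rho)-\nu}$ equals $\mathbf z^{\lambda+\rho-\mu}$ precisely when $\nu = w(\lambda+\rho)-\lambda-\rho+\mu = w\circ\lambda+\mu$, so the coefficient of $\mathbf z^{\lambda+\rho-\mu}$ is $\sum_{w \in W}(-1)^{\ell(w)}K^\infty_q(w\circ\lambda+\mu)$, which is (\ref{eqn-K-infty}).

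The only point requiring a word of justification — and the closest thing to an obstacle, though it is mild — is the well-definedness of this coefficient extraction when $W$ is infinite (the affine case): one must note that $K^\infty_q$ is supported on $Q_+$, that $w\circ\lambda\le 0$ for every $w\in W$, and that for fixed $\lambda\in P_+$ and $\mu\in Q_+$ one has $w\circ\lambda+\mu\in Q_+$ for only finitely many $w$, exactly as remarked in the text preceding the proposition. Hence both the sum over $w$ in (\ref{eqn-K-infty}) and the coefficient of $\mathbf z^{\lambda+\rho-\mu}$ on the right-hand side above are finite sums, and the manipulation of formal series is valid termwise. No deeper input is needed: the statement is essentially a bookkeeping consequence of Theorem \ref{thm-first} and Definition \ref{def-a}.
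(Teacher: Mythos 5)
Your proposal is correct and is essentially the paper's own argument: the paper likewise writes $\chi_q(V(\lambda+\rho))$ as the product of the Weyl-numerator sum and $\sum_{\nu\in Q_+}K^\infty_q(\nu)\mathbf z^{-\nu}$ (via Definition \ref{def-a} and the definition of $K^\infty_q$) and then compares coefficients of $\mathbf z^{\lambda+\rho-\mu}$. Your closing remark on finiteness of the sum over $W$ matches the observation the paper makes just before the proposition, so nothing is missing.
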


\begin{proof}
We have from the definitions
\begin{eqnarray*} 
\chi_q(V(\lambda+\rho)) &=& \sum_{\mu \in Q_+} H_{\lambda +\rho}(\mu; q) \mathbf z^{\lambda+\rho-\mu} \\ &=& \left ( \sum_{w \in W} (-1)^{\ell(w)} \mathbf z^{w(\lambda+\rho)} \right ) \left (  \sum_{\nu \in Q_+} K^\infty_{q}(\nu) \mathbf z^{-\nu} \right ).
\end{eqnarray*} The identity in the proposition follows from expanding the product and comparing the coefficients.
\end{proof}

If we take the limit $q \rightarrow \infty$ in (\ref{eqn-K-infty}), we have from (\ref{eqn-H-inf})
\[ \dim  V(\lambda)_{\lambda - \mu} =  \sum_{w \in W} (-1)^{\ell(w)} K(w \circ \lambda + \mu),\]
which is again the classical Kostant's multiplicity formula.

If we take $q=-1$ in (\ref{eqn-K-infty}), we obtain from (\ref{eqn-H-minus}) \begin{equation} \label{eqn-GR}  \dim (V(\lambda) \otimes V(\rho))_{\lambda + \rho -\mu} =  \sum_{w \in W} (-1)^{\ell(w)} K^\infty_{-1}(w \circ \lambda + \mu).\end{equation} This is  a generalization of the formula in Theorem 1 of \cite{GR}  to the affine case.

\begin{Exa}
Assume that  $\frak g$ is of type $A_1^{(1)}$. We write $\mu =  m \alpha_0 + n \alpha_1 = (m,n) \in Q_+$ and set $\lambda =0$ in (\ref{eqn-K-infty}). Through standard computation, we obtain that  \[ \{ w \rho  + \mu -\rho \, | \, w \in W \} = \left \{ (m- \frac {k(k+1)} 2 , n - \frac {k(k-1)} 2) \, \Big | \, k \in \mathbb Z \right \} . \] 
Thus we have \[ H_\rho (m,n;q) = \sum_{k \in \mathbb Z} (-1)^k K^\infty_q (m- \frac {k(k+1)} 2 , n - \frac {k(k-1)} 2) .\]
By taking the limit $q \rightarrow \infty$, we obtain for $(m,n) \neq (0,0)$ \[ 0= \sum_{k \in \mathbb Z} (-1)^k K (m- \frac {k(k+1)} 2 , n - \frac {k(k-1)} 2) .\] In this case, $K(m,n)$ counts the number of vector partitions of $(m,n)$ into parts of the forms $(a,a)$, $(a-1, a)$ or $(a, a-1)$. Then one can see that we have obtained (3.9) on p.148 in \cite{Carlitz}.
\end{Exa}

\medskip

We further investigate properties of the function $H_{\lambda}(\mu ; q)$. 
From the definitions of $K^\infty_q(\mu)$ and $K^1_q(\mu)$, we have 
\begin{eqnarray*}  \left ( \sum_{\mu \in Q_+} K^\infty_{q}(\mu) \mathbf z^\mu \right ) \left (\sum_{\nu \in Q_+} K^1_{q}(\nu) \mathbf z^\nu \right )   &=& \prod_{\alpha \in \Delta_+} \left ( \frac {1 - q^{-1} \mathbf z^\alpha } { 1 - \mathbf z^{\alpha} } \right )^{\mathrm{mult}\, \alpha} \prod_{\alpha \in \Delta_+} \left ( {1 - q^{-1} \mathbf z^\alpha } \right )^{-\mathrm{mult}\, \alpha} \\ & =&  \prod_{\alpha \in \Delta_+} \left ( {1 - \mathbf z^\alpha } \right )^{-\mathrm{mult}\, \alpha} =  \sum_{\beta \in Q_+} K(\beta) \mathbf z^\beta , \end{eqnarray*} where $K(\beta)$ is the classical Kostant's function. Thus we have
\begin{equation} \sum_{\mu \in Q_+} K^\infty_q(\mu) K^1_q(\beta - \mu) = K(\beta), \end{equation} and we obtain, for $\beta > 0$,
\begin{equation} K^\infty_q(\beta) = K(\beta) -K^1_q(\beta) - \sum_{0 < \nu <\beta} K^\infty_q(\nu) K^1_q(\beta - \nu) , \end{equation} and $K^\infty_q(0)=K^1_q(0)=K(0)=1$.

 Then we obtain from Proposition \ref{prop-K-infty}
\begin{eqnarray*} H_{\lambda+\rho} (\mu;q) &=&   H_{\lambda+\rho} (\mu;1)+  \sum_{w \in W} (-1)^{\ell(w)} K(w \circ \lambda + \mu )  - \sum_{w \in W} (-1)^{\ell(w)} K^1_q(w \circ \lambda + \mu )  \\ & & \phantom{LLLLLLLLLLLLLL} - \sum_{w \in W \atop w \circ \lambda + \mu >0} (-1)^{\ell(w)} \sum_{0 < \nu <w \circ \lambda + \mu } K^\infty_q(\nu ) K^1_q(w \circ \lambda + \mu  -\nu), \end{eqnarray*} where $H_{\lambda+\rho} (\mu;1)$ plays the role of correction term for the case $w \circ \lambda + \mu =0$. See Lemma \ref{lem-H-1} for the value of $H_{\lambda+\rho} (\mu;1)$. Also we used the fact that $K(\beta)=K_q^1(\beta)=K_q^\infty(\beta)=0$ unless $\beta \ge 0$.

Now we apply the classical Kostant formula and get the following proposition.
\begin{Prop}\label{prop-WW} Assume that $\lambda \in P_+$ and $\mu \in Q_+$. Then we have
\begin{eqnarray*} H_{\lambda+\rho} (\mu;q) &=&   H_{\lambda+\rho} (\mu;1)+\dim V(\lambda)_{\lambda - \mu} - \sum_{w \in W} (-1)^{\ell(w)} K^1_q(w \circ \lambda +\mu)  \\ & & \phantom{LLLLLLLLLLLLLL} - \sum_{w \in W \atop w \circ \lambda + \mu >0} (-1)^{\ell(w)}  \sum_{0 < \nu <w \circ \lambda +\mu} K^\infty_q(\nu ) K^1_q(w \circ \lambda +\mu -\nu).\end{eqnarray*}
\end{Prop}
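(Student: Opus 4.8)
The plan is to obtain the statement directly from the displayed identity established in the paragraph immediately preceding the proposition, by substituting into it the classical Kostant multiplicity formula. First I would recall how that display arises. Multiplying the generating functions from the definitions of $K^\infty_q$ and $K^1_q$ gives
\[ \left ( \sum_{\mu \in Q_+} K^\infty_{q}(\mu) \mathbf z^\mu \right ) \left ( \sum_{\nu \in Q_+} K^1_{q}(\nu) \mathbf z^\nu \right ) = \prod_{\alpha \in \Delta_+} ( 1 - \mathbf z^\alpha )^{-\mathrm{mult}\, \alpha} = \sum_{\beta \in Q_+} K(\beta) \mathbf z^\beta , \]
hence $\sum_{\mu} K^\infty_q(\mu) K^1_q(\beta-\mu) = K(\beta)$ and, for $\beta > 0$,
\[ K^\infty_q(\beta) = K(\beta) - K^1_q(\beta) - \sum_{0 < \nu < \beta} K^\infty_q(\nu) K^1_q(\beta - \nu) , \]
with $K^\infty_q(0)=1$. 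Plugging this recursion into Proposition \ref{prop-K-infty}, that is, into $H_{\lambda+\rho}(\mu;q) = \sum_{w \in W} (-1)^{\ell(w)} K^\infty_q(w \circ \lambda + \mu)$, and separating off the terms with $w \circ \lambda + \mu = 0$ — whose total contribution equals $H_{\lambda+\rho}(\mu;1)$ by Lemma \ref{lem-H-1} — yields exactly the four-term formula displayed before the proposition, whose second summand is $\sum_{w \in W} (-1)^{\ell(w)} K(w \circ \lambda + \mu)$.

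It then remains only to identify that second summand. But this is precisely the classical Kostant multiplicity formula: taking $q \to \infty$ in (\ref{eqn-K-infty}) and using (\ref{eqn-H-inf}) gives
\[ \dim V(\lambda)_{\lambda - \mu} = \sum_{w \in W} (-1)^{\ell(w)} K(w \circ \lambda + \mu) . \]
Substituting this into the displayed four-term formula produces the claimed identity for $H_{\lambda+\rho}(\mu;q)$.

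The one point I would be careful to record is the finiteness of all the sums over $W$ occurring above, which is what makes the rearrangements legitimate: since $K(\gamma) = K^1_q(\gamma) = K^\infty_q(\gamma) = 0$ unless $\gamma \in Q_+$, and since for fixed $\lambda \in P_+$ and $\mu \in Q_+$ one has $w \circ \lambda + \mu \in Q_+$ for only finitely many $w \in W$ — as already observed after Lemma \ref{lem-H-1} — each of these sums collapses to a finite sum. Beyond this bookkeeping there is no genuine obstacle; the content of the proposition is just the substitution of the classical Kostant formula into the $q$-deformed recursion for $K^\infty_q$, with the $w \circ \lambda + \mu = 0$ contribution tracked through Lemma \ref{lem-H-1}.
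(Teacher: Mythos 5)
Your proposal is correct and follows essentially the same route as the paper: the recursion for $K^\infty_q$ coming from $\sum_{\mu} K^\infty_q(\mu)K^1_q(\beta-\mu)=K(\beta)$ is substituted into Proposition \ref{prop-K-infty}, the $w\circ\lambda+\mu=0$ terms are accounted for by $H_{\lambda+\rho}(\mu;1)$ via Lemma \ref{lem-H-1}, and the remaining $K$-sum is identified with $\dim V(\lambda)_{\lambda-\mu}$ by the classical Kostant multiplicity formula. Your explicit remark on the finiteness of the sums over $W$ matches the paper's own observation and is the only bookkeeping point that needs recording.
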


\medskip

For the rest of this section, we assume that $\frak g$ is of finite type. We denote by $\rho^\vee$ the element of $\frak h$ defined by $\langle \alpha_i, \rho^\vee \rangle=1$ for all the simple roots $\alpha_i$. The following identity was conjectured by Lusztig  \cite{Lusz-sing} and proved by S. Kato \cite{Kato}.
\begin{Prop} \label{prop-Kato} 
For $\lambda \in P_+$ and $\mu \in Q_+$, we have
\[  \sum_{w \in W} (-1)^{\ell(w)} K^1_q(w \circ \lambda +\mu) = q^{-\langle \mu, \rho^\vee \rangle}P_{w_{\lambda - \mu}, w_{\lambda}}(q), \]
where $w_\nu$ is the element in the affine Weyl group $\hat W$ corresponding to $\nu \in P_+$ and $P_{w_{\lambda - \mu}, w_{\lambda}}(q)$ is the Kazhdan-Lusztig polynomial. 
\end{Prop}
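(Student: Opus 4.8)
The plan is to interpose a third quantity between the two sides — the zonal spherical function of the relevant split $p$-adic group, equivalently the Hall--Littlewood polynomial attached to $\frak g$ — and to realise all three inside the affine Hecke algebra $\mathcal H$ of $\widehat W$, where the identity reduces to the uniqueness of a self-dual triangular element. Since $K^1_q$ is a polynomial in $q^{-1}$, the left-hand side is Lusztig's $q$-analogue of the weight multiplicity of $\lambda-\mu$ in $V(\lambda)$ in the $q\leftrightarrow q^{-1}$ normalisation; write $\nu=\lambda-\mu$ (dominant, this being the case in which $w_{\lambda-\mu}$ makes sense) and abbreviate the left side as $\ell^\lambda_\nu(q)$.

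First I would reinterpret $\ell^\lambda_\nu$ combinatorially. Summing the defining series over $\mu$ yields $\sum_\mu\big(\sum_w(-1)^{\ell(w)}K^1_q(w\circ\lambda+\mu)\big)\mathbf z^{\lambda-\mu}=\big(\sum_w(-1)^{\ell(w)}\mathbf z^{w(\lambda+\rho)-\rho}\big)\prod_{\alpha\in\Delta_+}(1-q^{-1}\mathbf z^{-\alpha})^{-1}=\chi(V(\lambda))\prod_{\alpha\in\Delta_+}\tfrac{1-\mathbf z^{-\alpha}}{1-q^{-1}\mathbf z^{-\alpha}}$. Comparing this with Macdonald's Weyl-type formula for the Hall--Littlewood polynomials $P_\nu(\,\cdot\,;t)$ of $\frak g$ and restricting to dominant weights gives Lusztig's combinatorial identity: $\ell^\lambda_\nu(q^{-1})$ equals the Kostka--Foulkes polynomial $K_{\lambda\nu}(q^{-1})$, that is, the coefficient of $P_\nu(\,\cdot\,;q^{-1})$ when the character $\chi(V(\lambda))$ is expanded in the Hall--Littlewood basis.

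Next I would place the Hall--Littlewood polynomial inside $\mathcal H$. Use the Bernstein presentation, with finite Hecke subalgebra $\mathcal H_0=\bigoplus_{w\in W}\mathbb Z[q^{\pm1/2}]T_w$, translation generators $\theta_\xi$ ($\xi$ in the translation lattice $P$ of $\widehat W$), and $\mathbf 1_0=\sum_{w\in W}q^{\ell(w)}T_w$, which satisfies $T_s\mathbf 1_0=q\,\mathbf 1_0$. The spherical subalgebra $\mathcal Z:=\mathbf 1_0\mathcal H\mathbf 1_0\cong\mathbb Z[q^{\pm1/2}][P]^{W}$ carries both the monomial basis $\{m_\mu\}$ and the Satake/Hall--Littlewood basis. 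On one hand, Macdonald's explicit spherical-function formula for the split $p$-adic group Langlands-dual to $\frak g$ identifies the Satake image of the double-coset element $\mathbf 1_0T_{w_\lambda}\mathbf 1_0$ with $q^{\langle\lambda,\rho^\vee\rangle}P_\lambda(\,\cdot\,;q^{-1})$ up to an overall constant. On the other hand $T_{w_\lambda}=C_{w_\lambda}+(\text{bar-small terms})$, where $C_{w_\lambda}=\sum_{y\le w_\lambda}(\pm)\,q^{\star}P_{y,w_\lambda}(q)\,T_y$ is the Kazhdan--Lusztig basis element, i.e.\ the unique bar-invariant element congruent to $T_{w_\lambda}$ modulo $\sum_{y<w_\lambda}q^{-1/2}\mathbb Z[q^{-1/2}]\,T_y$. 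As $w_\lambda$ is minimal in its $W$-double coset (one works with $\mathbf 1_0$, or with the sign idempotent in the opposite convention), $\mathbf 1_0C_{w_\lambda}\mathbf 1_0$ lies in $\mathcal Z$, is bar-invariant, and, after the normalisation prescribed by the $q$-powers above, is triangular with leading monomial $m_\lambda$; it is therefore forced to coincide with the self-dual element of $\mathcal Z$ with that leading term — the normalised Hall--Littlewood/spherical element. Collecting the $T_y$ in $\mathbf 1_0C_{w_\lambda}\mathbf 1_0$ by $W$-double cosets, the sum over the coset of $w_\nu$ collapses to $q^{-\langle\mu,\rho^\vee\rangle}P_{w_\nu,w_\lambda}(q)$ times $m_\nu$; equating this with the coefficient found in the previous paragraph gives $\ell^\lambda_\nu(q^{-1})=q^{-\langle\mu,\rho^\vee\rangle}P_{w_\nu,w_\lambda}(q)$, the assertion.

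The Hecke-algebra part — the Bernstein relations, the length and double-coset combinatorics in $\widehat W$ that generate the powers $q^{\langle\lambda,\rho^\vee\rangle}$ and $q^{-\langle\mu,\rho^\vee\rangle}$, and the uniqueness of the self-dual triangular element — is routine. I expect the main obstacle to be the bridge joining the two sides: Macdonald's spherical-function formula, identifying the Satake image of $\mathbf 1_0T_{w_\lambda}\mathbf 1_0$ with the Hall--Littlewood polynomial, together with Lusztig's identity $\ell^\lambda_\nu(q^{-1})=K_{\lambda\nu}(q^{-1})$. Alternatively one may argue geometrically: geometric Satake identifies $\ell^\lambda_\nu(q)$ with the Poincar\'e polynomial of the stalk of $\mathrm{IC}_{\overline{\mathrm{Gr}^\lambda}}$ along $\mathrm{Gr}^\nu$ in the affine Grassmannian of the dual group, and the Kazhdan--Lusztig conjecture for affine Schubert varieties (Beilinson--Bernstein, Kashiwara--Tanisaki) identifies that Poincar\'e polynomial with the affine Kazhdan--Lusztig polynomial $P_{w_\nu,w_\lambda}$, the factor $q^{-\langle\mu,\rho^\vee\rangle}$ recording the passage from the naive $\mathrm{IC}$ to its Verdier-self-dual normalisation. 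Either way, it is this harmonic-analytic (or geometric) input, not the Hecke-algebra formalism, that carries the real content of the theorem.
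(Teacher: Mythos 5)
The paper does not prove this proposition at all: it is quoted verbatim as a known theorem, conjectured by Lusztig in \cite{Lusz-sing} and proved by S.~Kato in \cite{Kato}, and the text preceding the statement is the entire ``proof.'' So there is no in-paper argument to measure you against; the only fair comparison is with the literature the paper cites.

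Measured against that, your outline is the standard route and is essentially how the result is actually established: rewrite the alternating sum as Lusztig's $q$-analogue of weight multiplicity, identify it with the Kostka--Foulkes coefficient via Macdonald's Weyl-type formula for Hall--Littlewood polynomials, and match this on the Hecke-algebra side with the spherical projection of the Kazhdan--Lusztig basis element $C_{w_\lambda}$ through the Satake isomorphism and the uniqueness of the bar-invariant triangular element. Two caveats. First, the two inputs you flag as ``the bridge'' --- Macdonald's spherical-function formula together with the identification $\ell^\lambda_\nu=K_{\lambda\nu}$, or alternatively geometric Satake plus the Kazhdan--Lusztig conjecture for affine Schubert varieties --- are precisely the content of the Lusztig--Kato theorem; invoking them as black boxes means your proposal is a correct reduction to the cited literature rather than an independent proof, which is exactly the status the proposition has in the paper. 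Second, a convention point worth fixing: $w_\lambda$ is the \emph{longest} element of the double coset $Wt_\lambda W$ in Lusztig's setup (it is the maximal, not the minimal, representative that makes $\mathbf 1_0C_{w_\lambda}\mathbf 1_0$ a multiple of $C_{w_\lambda}$ for the idempotent you chose); your hedge about ``the opposite convention'' papers over a sign/normalisation that must be pinned down for the exponent $q^{-\langle\mu,\rho^\vee\rangle}$ to come out right. You are also correct that the statement implicitly requires $\lambda-\mu$ dominant for $w_{\lambda-\mu}$ to be defined.
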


Hence we obtain from Proposition \ref{prop-WW}:
\begin{Cor} \label{cor-KL}
\begin{eqnarray*} H_{\lambda+\rho} (\mu;q) &=&   H_{\lambda+\rho} (\mu;1)+ \dim V(\lambda)_{\lambda - \mu} - q^{-\langle \mu, \rho^\vee \rangle}P_{w_{\lambda - \mu}, w_{\lambda}}(q) \\ & & \phantom{LLLLLLLLLLLLLL} - \sum_{w \in  W \atop w \circ \lambda + \mu >0} (-1)^{\ell(w)}  \sum_{0 < \nu <w \circ \lambda +\mu} K^\infty_q(\nu ) K^1_q(w \circ \lambda +\mu -\nu).\end{eqnarray*} 
\end{Cor}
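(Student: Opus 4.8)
The plan is to derive Corollary \ref{cor-KL} by a single substitution: feed the Lusztig--Kato identity (Proposition \ref{prop-Kato}) into the expansion of $H_{\lambda+\rho}(\mu;q)$ provided by Proposition \ref{prop-WW}. Proposition \ref{prop-WW} holds in both finite and affine types; here we are in the finite-type setting, so the element $\rho^\vee \in \frak h$, the affine Weyl group $\hat W$, the elements $w_\nu$ attached to $\nu \in P_+$, and the Kazhdan--Lusztig polynomials $P_{w_{\lambda-\mu},w_\lambda}(q)$ are all available.

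First I would write down the right-hand side of Proposition \ref{prop-WW}, which consists of four pieces: the correction term $H_{\lambda+\rho}(\mu;1)$, the dimension $\dim V(\lambda)_{\lambda-\mu}$, the alternating sum $-\sum_{w\in W}(-1)^{\ell(w)}K^1_q(w\circ\lambda+\mu)$, and the double sum over $w \in W$ with $w\circ\lambda+\mu>0$ and over $\nu$ with $0<\nu<w\circ\lambda+\mu$ of $-(-1)^{\ell(w)}K^\infty_q(\nu)K^1_q(w\circ\lambda+\mu-\nu)$. The only piece that gets rewritten is the third one: by Proposition \ref{prop-Kato} we have $\sum_{w\in W}(-1)^{\ell(w)}K^1_q(w\circ\lambda+\mu)=q^{-\langle\mu,\rho^\vee\rangle}P_{w_{\lambda-\mu},w_\lambda}(q)$, so this term becomes $-q^{-\langle\mu,\rho^\vee\rangle}P_{w_{\lambda-\mu},w_\lambda}(q)$. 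Substituting this and leaving the other three pieces untouched yields exactly the formula claimed in Corollary \ref{cor-KL}.

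Since Proposition \ref{prop-Kato} is quoted from \cite{Lusz-sing} and \cite{Kato}, nothing deep remains and the argument is purely formal. The one point to watch in the write-up is the bookkeeping around the locus $w\circ\lambda+\mu=0$: this value is excluded from both Kostant-type sums (through the conditions $w\circ\lambda+\mu>0$ and $0<\nu<w\circ\lambda+\mu$) but included in the alternating $K^1_q$-sum evaluated by Kato's identity, and it is precisely the term $H_{\lambda+\rho}(\mu;1)$ — computed in Lemma \ref{lem-H-1} — that absorbs this discrepancy. Thus the main (and essentially only) obstacle, identifying the relevant alternating Lusztig $q$-partition sum with a Kazhdan--Lusztig polynomial, has already been taken care of by Proposition \ref{prop-Kato}.
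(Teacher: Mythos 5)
Your proposal is correct and is exactly the paper's argument: Corollary \ref{cor-KL} is obtained by substituting the Lusztig--Kato identity of Proposition \ref{prop-Kato} for the alternating sum $\sum_{w\in W}(-1)^{\ell(w)}K^1_q(w\circ\lambda+\mu)$ appearing in Proposition \ref{prop-WW}, leaving the other terms unchanged. Your remark on the role of $H_{\lambda+\rho}(\mu;1)$ at the locus $w\circ\lambda+\mu=0$ matches the paper's own accounting in the derivation of Proposition \ref{prop-WW}.
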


Setting $q=1$, and noting that $K_1^\infty(\beta)=0$ if $\beta>0$, we see the famous property of the Kazhdan-Lusztig polynomial:  $\dim V(\lambda)_{\lambda - \mu} = P_{w_{\lambda - \mu}, w_{\lambda}}(1)$.

\vskip 1cm


\begin{thebibliography}{10}

\bibitem{An} G. E. Andrews, {\em The theory of partitions}, Encyclopedia of Mathematics and its Applications, Vol. {\bf 2}, Addison-Wesley Publishing Co., Reading, Mass.-London-Amsterdam, 1976.

\bibitem{BCP} J. Beck, V. Chari and A. Pressley,  {\em An algebraic characterization of the affine canonical basis}, Duke Math. J. {\bf 99} (1999), no. 3, 455--487.

\bibitem{BeckNa}  J. Beck and H. Nakajima, {\em Crystal bases and two-sided cells of quantum affine algebras}, Duke Math. J. {\bf 123} (2004), no. 2, 335--402.

\bibitem{BKP} A. Braverman, D. Kazhdan and M. Patnaik, {\em The Iwahori-Hecke algebra for an affine Kac-Moody group}, in preparation.


\bibitem{BBCFH}
B. Brubaker, D. Bump, G. Chinta, S. Friedberg and J. Hoffstein, {\em Weyl group multiple Dirichlet series. I}, Multiple Dirichlet series, automorphic forms, and analytic number theory, 91--114, Proc. Sympos. Pure Math., {\bf 75}, Amer. Math. Soc., Providence, RI, 2006.

\bibitem{BBFH}
B. Brubaker, D. Bump, S. Friedberg and J. Hoffstein, {\em Weyl group multiple Dirichlet series. III. Eisenstein series and twisted unstable $A\sb r$}, Ann. of Math. (2) {\bf 166} (2007), no. 1, 293--316.


\bibitem{BBF}
B. Brubaker, D. Bump and S. Friedberg, {\em Weyl group multiple Dirichlet series. II. The stable case}, Invent. Math. {\bf 165} (2006), no. 2, 325--355.

\bibitem{BBFcomb}
\bysame, {\em Weyl Group Multiple Dirichlet Series: Type $A$ Combinatorial Theory}, to appear as a volume in Annals of Mathematics Studies, Princeton University Press. 



\bibitem{BBFcr}
\bysame, {\em Weyl group multiple Dirichlet series, Eisenstein series and crystal bases}, to appear in Ann. of Math.

\bibitem{BN} D. Bump and M. Nakasuji, {\em Integration on $p$-adic groups and crystal bases}, to appear in Proc. Amer. Math. Soc.

\bibitem{Carlitz} L. Carlitz, {\em Generating functions and partition problems}, Proc. Sympos. Pure Math., Vol. {VIII} pp. 144--169, 1965,  Amer. Math. Soc., Providence, R.I. 


\bibitem{CGco}
G. Chinta and P. Gunnells, {\em Constructing Weyl group multiple Dirichlet series}, J. Amer. Math. Soc. {\bf 23} (2010), no. 1, 189--215.

\bibitem{GR} V. Guillemin and E. Rassart,
{\em Signature quantization and representations of compact Lie groups}, 
Proc. Natl. Acad. Sci. USA {\bf 101} (2004), no. 30, 10884--10889.

\bibitem{Kac} 
V. G. Kac,  {\em Infinite-dimensional Lie algebras}, Third edition, Cambridge University Press, Cambridge, 1990.

\bibitem{Kato} S. Kato, {\em Spherical functions and a $q$-analogue of Kostant's weight multiplicity formula}, Invent. Math. {\bf 66} (1982), no. 3, 461--468.

\bibitem{KimLee-1} H. H. Kim and K.-H. Lee, {\em Representation theory of $p$-adic groups and canonical bases}, submitted.


\bibitem{Kash} M. Kashiwara, {\em On crystal bases of the $q$-analogue of universal enveloping algebras}, Duke Math. J. {\bf 63} (1991), no. 2, 465--516.

\bibitem{Lusz-1} G. Lusztig, {\em Canonical bases arising from quantized enveloping algebras}, J. Amer. Math. Soc. {\bf 3} (1990), no. 2, 447--498.


\bibitem{Lusz-sing} \bysame, {\em Singularities, character formulas, and a $q$-analog of weight multiplicities}, Analysis and topology on singular spaces, II, III (Luminy, 1981), 208--229, Ast\'erisque, {\bf 101-102}, Soc. Math. France, Paris, 1983.

\bibitem{Lusz-Bk} \bysame, {\em Introduction to quantum groups}, Progress in Mathematics {\bf 110},  Birkh{\" a}user Boston, Inc., Boston, MA, 1993.

\bibitem{McN} P. J. McNamara, {\em Metaplectic Whittaker functions and crystal bases}, to appear in Duke Math. J.

\end{thebibliography}
\end{document}